\documentclass[12pt]{article}

\usepackage{amsmath, amsthm, amssymb}
\usepackage{mathptmx}

\usepackage{graphicx}
\usepackage{float}
\usepackage{multirow}
\usepackage{tikz}
\usepackage{enumerate}

\usepackage{algorithm}
\usepackage{algpseudocode} 

\usepackage{subcaption}

\usepackage[a4paper]{geometry}
\usepackage{xcolor}
\usepackage{hyperref}

\newtheorem{theorem}{Theorem}[section]
\newtheorem{proposition}{Proposition}[section]
\newtheorem{remark}{Remark}[section]
\newtheorem{lemma}{Lemma}[section]

\newtheorem{assumption}{Assumption}

\begin{document}

\title{Multi-period Value-at-Risk Constrained Portfolio Optimization via DC Programming}

\author{
Nguyen Thi Thu Van%
\thanks{University of Economics Ho Chi Minh City, Vietnam.
Email: \texttt{van.nguyen@ueh.edu.vn}}
}

\date{\today}

\maketitle
\begin{abstract}

In this paper, we study a multi-period portfolio optimization problem with
finite-scenario Value-at-Risk (VaR) constraints. The objective balances
expected return against proportional transaction costs and a quadratic
regularizer that discourages portfolio concentration. Using a
finite-scenario VaR--CVaR identity, we derive a penalized
difference-of-convex (DC) formulation over the underlying convex
portfolio set. We prove the existence of optimal solutions and give a
sufficient condition under which the original constrained problem and
its penalized formulation have the same global minimizers. To solve the
penalized DC problem, we propose a projected inertial boosted
difference-of-convex algorithm (iBDCA) that combines inertial
extrapolation, an objective safeguard, and a boosted line search. We
prove that the algorithm is well defined, generates a nonincreasing
sequence of penalized objective values, and that every accumulation
point is critical. Under a local no-ties condition at an accumulation
point, we further establish convergence of the whole sequence to that
point with a local \(R\)-linear rate. Numerical experiments compare
iBDCA with DCA and BDCA on matched problem instances and examine the
robustness of the methods with respect to the VaR threshold.
Out-of-sample backtests additionally include equal-weight and
buy-and-hold benchmarks to assess realized performance, downside risk,
and empirical VaR control.

\end{abstract}

\noindent\textbf{Keywords:}
DC programming;
Difference-of-Convex Algorithm (DCA);
Boosted DCA;
inertial method;
Armijo line search;
nonsmooth optimization;
multi-period portfolio optimization;
Value-at-Risk;
Conditional Value-at-Risk.

\section{Introduction}

Portfolio optimization under risk constraints is a fundamental problem
in mathematical finance and operations research. Since the seminal
work of Markowitz \cite{Markowitz1952}, many models have been
developed to balance expected return and financial risk. Among the risk
measures used to quantify potential portfolio losses, Value-at-Risk
(VaR) remains an important benchmark because of its intuitive
quantile-based interpretation \cite{BasakShapiro2001, Hull2023,Ruppert2011}. Its
relevance is further supported by empirical properties of financial
returns, including heavy tails, volatility clustering, and departures
from Gaussian behavior
\cite{Cont2001,Embrechts1997,Mandelbrot1963}. However, incorporating
VaR into portfolio optimization creates computational difficulties
because the resulting problems are generally nonsmooth and nonconvex.

For a discrete return distribution represented by finitely many
scenarios, portfolio VaR is continuous and piecewise affine and can be
expressed as the difference of two Conditional Value-at-Risk (CVaR)
functions. Since these CVaR functions are convex and piecewise affine
\cite{Krokhmal2002,RockafellarUryasev2000,
RockafellarUryasev2002}, this representation makes DC optimization
methods applicable. DC programming and DCA approaches for VaR and
VaR-constrained portfolio problems were studied in
\cite{PhamDinhNguyenLeThi2009,Wozabal2012,
WozabalHochreiterPflug2010}. Related DC portfolio models involving
downside risk measures and cardinality constraints were considered in
\cite{LeThiMoeiniPhamDinh2009a}. Multi-period portfolio models have
also been investigated in mean--variance and stochastic programming
frameworks \cite{LiNg2000,MulveyVladimirou1992}. A multi-period model
involving VaR, transaction costs, and practical constraints was
considered in \cite{BabazadehEsfahanipour2019}, while transaction
costs in portfolio optimization have been studied in
\cite{Constantinides1986,LeThiMoeiniPhamDinh2009b}. More recently,
Thormann et al. developed a BDCA framework for a single-period
portfolio problem with a VaR constraint
\cite[Algorithm~2]{ThormannVuongZemkoho2026}. However, existing DC
approaches to VaR-constrained portfolio optimization are primarily
single-period, while a unified multi-period DC treatment of periodwise
VaR constraints, intertemporal transaction costs, and diversification
regularization remains limited.

Inspired by this framework, we consider a multi-period VaR-constrained
portfolio model with proportional transaction costs and
diversification-promoting quadratic regularization. At each planning
period, portfolio allocations satisfy budget, nonnegativity, and VaR
constraints. Transaction costs couple consecutive allocations, while
the quadratic regularizer discourages excessive portfolio
concentration. Using a finite-scenario VaR--CVaR identity, we formulate
the model as a penalized DC program over the underlying convex
portfolio set. We establish the existence of optimal solutions and give
a sufficient condition under which the original constrained problem
and its penalized formulation have the same global minimizers.

To solve the penalized problem, we propose a projected inertial BDCA
(iBDCA). The method first projects an inertial extrapolation onto the
feasible set and applies an objective safeguard. It then solves a
strongly convex DCA subproblem and performs a boosted line search. The
inertial step generates extrapolated candidate points, while the
objective safeguard preserves descent of the penalized objective. The
method also builds on the general DCA and BDCA frameworks developed in
\cite{AragonArtachoFlemingVuong18,AragonArtachoVuong20,
LeThiPhamDinh2005}. We prove that the proposed algorithm is well defined, that it generates
a nonincreasing sequence of penalized objective values, and that every
accumulation point is a critical point of the penalized DC problem. The
extended-valued objective associated with the multi-period problem is
piecewise linear--quadratic and satisfies the
Kurdyka--\L{}ojasiewicz property with exponent \(1/2\). Under a local
no-ties condition at an accumulation point, we further establish
convergence of the whole sequence to that point with a local
\(R\)-linear rate
\cite{AttouchBolteSvaiter2013,Bolte2007,LiPong2018}.

Numerical experiments on two multi-asset datasets are conducted in
three parts. First, matched solver experiments compare iBDCA with DCA
and BDCA on identical problem instances in terms of attained penalized
objective values, scenario-based VaR feasibility, iteration counts,
and solution times. Second, robustness experiments examine the
performance of the three methods under changes in the VaR threshold.
Finally, out-of-sample backtests compare the resulting portfolio
strategies with equal-weight and buy-and-hold benchmarks using final
wealth, the Sharpe ratio, maximum drawdown, empirical exceedance rates,
and VaR residuals.

The remainder of the paper is organized as follows.
Section~\ref{sec2} reviews the preliminary concepts.
Section~\ref{sec3} introduces the portfolio model, establishes the
existence of optimal solutions, develops its penalized DC
reformulation, and gives a sufficient condition for global equivalence
between the original and penalized problems. Section~\ref{sec4}
presents the projected inertial BDCA and analyzes its convergence
properties. Section~\ref{sec5} reports the numerical experiments, and
Section~\ref{sec6} concludes the paper.

\section{Preliminaries}
\label{sec2}

In this section, we recall the basic concepts from DC programming and
risk measurement that will be used throughout the paper; see, e.g.,
\cite{BoydVandenberghe2004,TaoAn97,
RockafellarWets1998}. We also state an elementary sequence lemma used
later in the convergence analysis.

For \(u,v\in\mathbb R^d\), we denote by
\(\langle u,v\rangle\) the standard Euclidean inner product and by
\(\|\cdot\|\) the corresponding Euclidean norm, where \(d\in \mathbb N=\{1,2, \dots \} \). For
\(u\in\mathbb R^d\), the \(\ell_1\)-norm is defined by
\( 
\|u\|_1
:=
\sum_{i=1}^{d}|u_i|.
\) 

Throughout the paper, the multi-period decision variable is written in
block form as
\( 
x=(x^1,\ldots,x^T)\in(\mathbb R^n)^T,
\)
where \(x^t\in\mathbb R^n\) denotes the portfolio allocation at period
\(t\). We identify \((\mathbb R^n)^T\) with \(\mathbb R^{nT}\). Under
this identification, the Euclidean norm satisfies
\( 
\|x\|^2
=
\sum_{t=1}^{T}\|x^t\|^2.
\) 

We first recall some basic concepts from DC programming. A finite-valued
function \(\Phi:\mathbb R^{nT}\to\mathbb R\) is called a
\emph{difference-of-convex (DC) function} if it admits a representation
\begin{equation}
\label{eq:generic_dc}
\Phi(x)=G(x)-H(x),
\; x\in\mathbb R^{nT},
\end{equation}
where \(G,H:\mathbb R^{nT}\to\mathbb R\) are convex functions. The pair
\((G,H)\) is called a DC decomposition of \(\Phi\), which is generally
not unique; see
\cite{LeThiPhamDinh2005,TaoAn97}.

For a proper convex function
\(f:\mathbb R^{nT}\to(-\infty,+\infty]\) and
\(x\in\operatorname{dom}f\), its convex subdifferential is defined by
\[
\partial f(x)
=
\left\{
v\in\mathbb R^{nT}:
f(y)\geq f(x)+\langle v,y-x\rangle,
\; \forall y\in\mathbb R^{nT}
\right\}.
\]
If \(f\) is differentiable at \(x\), then
\( 
\partial f(x)=\{\nabla f(x)\}.
\) 

Let \(C\subset\mathbb R^{nT}\) be a nonempty closed convex set. The
normal cone to \(C\) at \(x\in C\) is defined by
\[
N_C(x)
=
\left\{
v\in\mathbb R^{nT}:
\langle v,y-x\rangle\leq0,
\; \forall y\in C
\right\}.
\]

A point \(\bar x\in C\) is called a \emph{critical point} of the
constrained DC problem
\[
\min_{x\in C}\;G(x)-H(x)
\]
if
\( 
\partial H(\bar x)
\cap
\bigl(
\partial G(\bar x)+N_C(\bar x)
\bigr)
\neq\varnothing.
\) 
Every local minimizer of the constrained DC problem is a critical
point; see
\cite{LeThiPhamDinh2005,TaoAn97}.

\medskip
We next recall the risk measures used in the portfolio model. Let \(L\)
be a real-valued loss random variable. For a confidence level
\(\alpha\in(0,1)\), the Value-at-Risk of \(L\) at level \(\alpha\) is
defined by
\[
\operatorname{VaR}_{\alpha}(L)
:=
\inf
\left\{
u\in\mathbb R:
\mathbb P(L\leq u)\geq\alpha
\right\}.
\]
Equivalently,
\( 
\operatorname{VaR}_{\alpha}(L)
=
F_L^{-1}(\alpha),
\) 
where
\[
F_L^{-1}(\alpha)
:=
\inf
\left\{
u\in\mathbb R:
F_L(u)\geq\alpha
\right\}
\]
is the generalized inverse of the cumulative distribution function
\cite{Acerbi2002,RockafellarUryasev2002}.

Let \(z\in\mathbb R^n\) denote a
generic portfolio allocation at a fixed period, and let
\(\xi\in\mathbb R^n\) be the corresponding random asset-return vector.
We adopt the loss convention
\( 
L(z):=-\xi^\top z.
\) 
Thus, a positive value of \(L(z)\) represents a loss, whereas a
negative value represents a gain.

A VaR constraint takes the form
\[
\operatorname{VaR}_{\alpha}\bigl(L(z)\bigr)
=
\operatorname{VaR}_{\alpha}(-\xi^\top z)
\leq\tau.
\]
By the definition of the generalized inverse, this constraint is
equivalent to
\[
\mathbb P(-\xi^\top z\leq\tau)
\geq\alpha.
\]
Hence, with probability at least \(\alpha\), the portfolio loss does
not exceed the prescribed threshold \(\tau\).

The mapping
\(
z\longmapsto
\operatorname{VaR}_{\alpha}(-\xi^\top z)
\) 
is generally nonconvex. Consequently, portfolio optimization problems
with VaR constraints are generally nonconvex and computationally
challenging
\cite{Wozabal2012,WozabalHochreiterPflug2010}.

We next recall Conditional Value-at-Risk, also known as Expected
Shortfall. Suppose that \(L\) is integrable. For
\(\alpha\in(0,1)\), its upper-tail CVaR is given by the
Rockafellar--Uryasev representation
\cite{RockafellarUryasev2000,RockafellarUryasev2002}:
\[
\operatorname{CVaR}_{\alpha}(L)
=
\min_{u\in\mathbb R}
\left\{
u+
\frac{1}{1-\alpha}
\mathbb E\bigl[(L-u)_+\bigr]
\right\},
\]
where
\( 
(a)_+:=\max\{a,0\}.
\)
If \(L\) has an atomless distribution, this representation reduces to
the conditional tail expectation
\[
\operatorname{CVaR}_{\alpha}(L)
=
\mathbb E
\left[
L
\,\middle|\,
L\geq\operatorname{VaR}_{\alpha}(L)
\right].
\]

CVaR is a coherent risk measure
\cite{Acerbi2002,Artzner1999}. Since the portfolio loss is affine in
\(z\), the mapping
\(
z\longmapsto
\operatorname{CVaR}_{\alpha}(-\xi^\top z)
\) 
is convex.

Throughout the paper, VaR and CVaR are applied to losses and therefore
refer to the upper tail of the loss distribution. This differs from
the lower-tail return convention used in
\cite{ThormannVuongZemkoho2026,Wozabal2012,
WozabalHochreiterPflug2010}. Accordingly, the confidence levels and
coefficients in the DC representation below are written under the
upper-tail loss convention.

We now specialize to the discrete return distributions used in the
portfolio model. The following lemma states the uniform VaR--CVaR
identity required for the subsequent DC reformulation.

\begin{lemma}
\label{lem:VaRDC}

Let \(\alpha\in(0,1)\), and let \(\xi\) be a discrete random
asset-return vector taking the values
\(\xi_j\in\mathbb R^n\), \(j=1,\ldots,S\), with probabilities
\(p_j>0\), \(j=1,\ldots,S\), satisfying
\( 
\sum_{j=1}^{S}p_j=1.
\) 

For \(z\in\mathbb R^n\), define the discrete portfolio loss
\(L(z):=-\xi^\top z\), whose scenario values are
\[
L_j(z):=-\xi_j^\top z,
\; j=1,\ldots,S.
\]
Define
\begin{equation}
\label{v_5aug_1}
\bar\varepsilon
:=
\alpha-
\max_{\substack{
J\subseteq\{1,\ldots,S\}\\
\sum_{j\in J}p_j<\alpha
}}
\sum_{j\in J}p_j.
\end{equation}
Then \(0<\bar\varepsilon\leq\alpha\). Moreover, for every
\(0<\gamma<\bar\varepsilon\) and every \(z\in\mathbb R^n\),
\[
\operatorname{VaR}_{\alpha}(L(z))
=
\frac{1-\alpha+\gamma}{\gamma}
\operatorname{CVaR}_{\alpha-\gamma}(L(z))
-
\frac{1-\alpha}{\gamma}
\operatorname{CVaR}_{\alpha}(L(z)).
\]
Consequently, the mapping
\(
z\longmapsto
\operatorname{VaR}_{\alpha}(-\xi^\top z)
\)
is a continuous piecewise-affine DC function.

\end{lemma}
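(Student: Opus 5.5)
We first dispose of the bound on \(\bar\varepsilon\). The maximum in \eqref{v_5aug_1} runs over a finite, nonempty family, since \(J=\varnothing\) gives sum \(0<\alpha\); so it is attained. Its value is \(\ge 0\) and strictly below \(\alpha\). Hence \(0<\bar\varepsilon\le\alpha\).

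The key structural fact is the following. For every \(z\), the distribution function \(F_{L(z)}\) takes only values of the form \(\sum_{j\in J}p_j\). Therefore no value of \(F_{L(z)}\) lies in the interval \([\alpha-\bar\varepsilon,\alpha)\). Now fix \(0<\gamma<\bar\varepsilon\) and put \(\beta:=\alpha-\gamma\). Then \(\beta\in(0,\alpha)\), and for every \(u\) we have \(F_{L(z)}(u)\ge\beta\) if and only if \(F_{L(z)}(u)\ge\alpha\). Consequently the quantile function \(s\mapsto F_{L(z)}^{-1}(s)\) is constant on \((\beta,\alpha]\), hence on \([\beta,\alpha]\) (at \(s=\beta\) it equals the same infimum), and equals \(\operatorname{VaR}_\alpha(L(z))\) there.

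To turn this into the identity we use the Acerbi quantile representation
\[
\operatorname{CVaR}_\beta(L)=\frac{1}{1-\beta}\int_\beta^1 F_L^{-1}(s)\,ds .
\]
This representation is valid for the Rockafellar--Uryasev definition recalled above, by \cite{Acerbi2002,RockafellarUryasev2002}. Splitting the integral at \(\alpha\) and using \(1-\beta=1-\alpha+\gamma\) gives
\[
(1-\alpha+\gamma)\operatorname{CVaR}_{\alpha-\gamma}(L)=\gamma\operatorname{VaR}_\alpha(L)+(1-\alpha)\operatorname{CVaR}_\alpha(L).
\]
Dividing by \(\gamma\) yields the claimed formula.

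If one prefers to avoid the integral representation, the same conclusion can be reached directly from the Rockafellar--Uryasev minimization. One uses the fact that \(u=\operatorname{VaR}_\alpha(L)\) is a minimizer of both \(u+\frac1{1-\beta}\mathbb E(L-u)_+\) and \(u+\frac1{1-\alpha}\mathbb E(L-u)_+\). This holds because the minimizer set at level \(\beta\) is \([F^{-1}(\beta),F^{-1}(\beta^+)]\), and this interval contains \(F^{-1}(\alpha)\) by the constancy established above. Evaluating both objectives at this common \(u\) and eliminating \(\mathbb E(L-u)_+\) then gives the identity.

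For the consequence, write
\[
\operatorname{CVaR}_\beta(-\xi^\top z)=\min_u\Bigl\{u+\tfrac1{1-\beta}\sum_j p_j(-\xi_j^\top z-u)_+\Bigr\}.
\]
This function is convex in \(z\), as noted earlier in the paper. It is also finite and piecewise affine, because it is the value function of a parametric linear program: introduce slack variables \(w_j\ge -\xi_j^\top z-u\), \(w_j\ge0\). Alternatively, it equals the average of the largest losses with fractional weights, which is a maximum of finitely many linear functions of \(z\). Since both coefficients \((1-\alpha+\gamma)/\gamma\) and \((1-\alpha)/\gamma\) are positive, VaR is a difference of two convex, piecewise-affine, finite (hence continuous) functions. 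It is therefore a continuous piecewise-affine DC function.

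The main obstacle is the step linking the quantile constancy on \([\beta,\alpha]\) to the CVaR formula at atoms, where ties among scenario losses occur. We handle it with the quantile-integral representation, which is valid for arbitrary, including discrete, distributions. It is this step that requires the condition to be uniform in \(z\), which we obtain because the argument depends only on the set of achievable cumulative probabilities and not on \(z\).
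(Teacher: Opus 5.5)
Your proof is correct and follows essentially the same route as the paper. The subset-sum observation makes the quantile constant on \([\alpha-\gamma,\alpha]\); the integrated-quantile identity for CVaR then yields the formula, and the LP representation of CVaR gives the continuous piecewise-affine DC conclusion. Your alternative via a common Rockafellar--Uryasev minimizer is also valid. One small slip: the excluded interval should be the open interval \((\alpha-\bar\varepsilon,\alpha)\), not \([\alpha-\bar\varepsilon,\alpha)\), because \(\alpha-\bar\varepsilon\) is itself an attainable subset sum and can be a value of \(F_{L(z)}\); since you only use \([\beta,\alpha)\) with \(\beta=\alpha-\gamma>\alpha-\bar\varepsilon\), nothing breaks.
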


\begin{proof}

The family
\(
\left\{
J\subseteq\{1,\ldots,S\}:
\sum_{j\in J}p_j<\alpha
\right\}
\)
is nonempty because it contains \(J=\varnothing\). Since
\(\{1,\ldots,S\}\) has finitely many subsets, the maximum
in~\eqref{v_5aug_1} exists and belongs to \([0,\alpha)\). Therefore,
\(0<\bar\varepsilon\leq\alpha\).

Fix \(z\in\mathbb R^n\), and set
\(
v:=\operatorname{VaR}_{\alpha}(L(z)).
\)
By the definition of VaR,
\[
\mathbb P(L(z)<v)
<
\alpha
\leq
\mathbb P(L(z)\leq v).
\]
Moreover,
\(
\mathbb P(L(z)<v)
=
\sum_{\{j:L_j(z)<v\}}p_j
\)
is one of the subset sums considered in~\eqref{v_5aug_1}. Hence,
\[
\mathbb P(L(z)<v)
\leq
\alpha-\bar\varepsilon.
\]
For \(0<\gamma<\bar\varepsilon\) and
\(s\in(\alpha-\gamma,\alpha]\), we therefore have
\[
\mathbb P(L(z)<v)
\leq
\alpha-\bar\varepsilon
<
\alpha-\gamma
<
s
\leq
\alpha
\leq
\mathbb P(L(z)\leq v).
\]
Thus, no value smaller than \(v\) has cumulative probability at least
\(s\), whereas \(v\) does. It follows that
\begin{equation}
\label{v_5aug_2}
\operatorname{VaR}_{s}(L(z))
=
v
=
\operatorname{VaR}_{\alpha}(L(z)),
\;
s\in(\alpha-\gamma,\alpha].
\end{equation}

Since \(L(z)\) has finite support, it is integrable. Moreover,
\(0<\alpha-\gamma<\alpha<1\). Hence, for every
\(\beta\in(0,1)\), the integrated-quantile identity
\cite{RockafellarUryasev2002} gives
\[
(1-\beta)\operatorname{CVaR}_{\beta}(L(z))
=
\int_{\beta}^{1}\operatorname{VaR}_{s}(L(z))\,ds.
\]
Applying this identity with \(\beta=\alpha-\gamma\) and
\(\beta=\alpha\), subtracting the resulting equalities, and
using~\eqref{v_5aug_2}, we obtain
\[
\begin{aligned}
(1-\alpha+\gamma)
\operatorname{CVaR}_{\alpha-\gamma}(L(z))
-
&(1-\alpha)
\operatorname{CVaR}_{\alpha}(L(z))
\\
&=
\int_{\alpha-\gamma}^{\alpha}
\operatorname{VaR}_{s}(L(z))\,ds
=
\int_{\alpha-\gamma}^{\alpha}
\operatorname{VaR}_{\alpha}(L(z))\,ds
=
\gamma\operatorname{VaR}_{\alpha}(L(z)).
\end{aligned}
\]
Dividing by \(\gamma>0\) yields
\begin{equation}
\label{v_5aug_3}
\operatorname{VaR}_{\alpha}(L(z))
=
\frac{1-\alpha+\gamma}{\gamma}
\operatorname{CVaR}_{\alpha-\gamma}(L(z))
-
\frac{1-\alpha}{\gamma}
\operatorname{CVaR}_{\alpha}(L(z)).
\end{equation}

Furthermore, for every
\(\beta\in(0,1)\), the CVaR of \(L(z)\) is the optimal value of the
linear program
\[
\begin{aligned}
\operatorname{CVaR}_{\beta}(L(z))
=
\min_{\substack{u\in\mathbb R\\r\in\mathbb R^S}}\;&
u+\frac{1}{1-\beta}\sum_{j=1}^{S}p_jr_j
\\
\text{\rm s.t.}\;&
r_j\geq L_j(z)-u,
\; j=1,\ldots,S,
\\
&
r_j\geq0,
\; j=1,\ldots,S.
\end{aligned}
\]
Since \(L_j(z)=-\xi_j^\top z\) is affine in \(z\), \(\operatorname{CVaR}_{\beta}(L(z))\) is finite-valued, convex, and piecewise affine, and hence continuous.
Therefore,~\eqref{v_5aug_3} expresses
\(
z\longmapsto
\operatorname{VaR}_{\alpha}(-\xi^\top z)
\)
as the difference of two finite-valued convex continuous
piecewise-affine functions. Hence, it is a continuous
piecewise-affine DC function.

\end{proof}
We conclude this section with an elementary lemma used in the
convergence analysis. Its proof is omitted because the result is
standard.

\begin{lemma}
\label{lem:sequence}

Let \(\{a_k\}\) and \(\{b_k\}\) be nonnegative sequences satisfying
\[
a_{k+1}
\leq
\theta a_k+b_k,
\; k\geq0,
\]
where \(0\leq\theta<1\) and \(b_k\to0\). Then
\(
a_k\to0.
\)

\end{lemma}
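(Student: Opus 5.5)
The plan is to show that \(a_k\) is eventually bounded by an arbitrarily small quantity. Since \(\theta<1\), we have \(1-\theta>0\), so this can be done with a standard \(\varepsilon\)-argument.

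First I would unroll the recursion. By induction on \(m\), for all \(k_0\geq0\) and \(m\geq1\),
\[
a_{k_0+m}\leq\theta^{m}a_{k_0}+\sum_{i=0}^{m-1}\theta^{m-1-i}b_{k_0+i}.
\]
The inductive step multiplies the bound for \(a_{k_0+m}\) by \(\theta\geq0\) and adds \(b_{k_0+m}\). This uses only the given inequality and the nonnegativity of \(\theta\).

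Next, fix \(\varepsilon>0\). Since \(b_k\to0\), I choose \(k_0\) such that \(b_k\leq\varepsilon(1-\theta)\) for all \(k\geq k_0\). Bounding the geometric sum by \(\sum_{i\geq0}\theta^i=1/(1-\theta)\) gives
\[
0\leq a_{k_0+m}\leq\theta^{m}a_{k_0}+\varepsilon \quad\text{for all } m\geq1.
\]
Since \(0\leq\theta<1\), we have \(\theta^{m}a_{k_0}\to0\) as \(m\to\infty\). Hence \(\limsup_k a_k\leq\varepsilon\). Because \(\varepsilon>0\) is arbitrary and \(a_k\geq0\), it follows that \(a_k\to0\).

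There is no real obstacle. The only points to check are that the tail of the sequence is handled by choosing the starting index \(k_0\) after \(b_k\) becomes small, and that \(\theta=0\) is covered, which it is, with the convention \(\theta^0=1\).
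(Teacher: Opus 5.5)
Your proof is correct and complete. The unrolled bound \(a_{k_0+m}\leq\theta^{m}a_{k_0}+\sum_{i=0}^{m-1}\theta^{m-1-i}b_{k_0+i}\) follows by induction using only \(\theta\geq0\), and choosing \(k_0\) so that \(b_k\leq\varepsilon(1-\theta)\) on the tail gives \(\limsup_k a_k\leq\varepsilon\). The paper omits this proof as standard, and your argument is the standard one it has in mind. The paper itself iterates the recursion in the same way in the proof of Theorem~\ref{thm:KL_convergence} to derive the \(R\)-linear rate.
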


\section{Multi-Period VaR-Constrained Portfolio Problem and DC Reformulation}
\label{sec3}

In this section, we proceed in two main steps. First, we introduce the
proposed multi-period VaR-constrained portfolio model. Then, we derive
a penalty DC reformulation of the proposed model.

\subsection{Multi-Period Portfolio Model}
\label{subsec:portfolio_problem}

Let \(n\) denote the number of risky assets and \(T\) the number of
planning periods. At each period \(t=1,\ldots,T\), the portfolio
allocation \(x^t\) belongs to the unit simplex
\[
X
:=
\left\{
z\in\mathbb R^n:
\sum_{i=1}^{n}z_i=1,\;
z_i\geq0,\; i=1,\ldots,n
\right\}.
\]
The set of admissible multi-period allocations is
\[
\mathcal X
:=
X^T
=
\underbrace{X\times\cdots\times X}_{T\text{ times}}.
\]

Let \(w_{\mathrm{prev}}\in X\) denote the portfolio held immediately
before the beginning of the planning horizon. Thus, the first-period
portfolio \(x^1\) is obtained by rebalancing
\(w_{\mathrm{prev}}\).

For each period \(t=1,\ldots,T\), let \(S_t\in\mathbb N\) denote the
number of return scenarios. The random return vector
\(\xi^t\in\mathbb R^n\) is assumed to take the scenario values
\(\xi_j^t\in\mathbb R^n\), \(j=1,\ldots,S_t\), with corresponding
probabilities \(p_j^t\) satisfying
\( 
p_j^t>0,\;
j=1,\ldots,S_t,\;
\sum_{j=1}^{S_t}p_j^t=1.
\) 
For a portfolio \(z\in\mathbb R^n\), the loss under scenario \(j\) at
period \(t\) is
\( 
L_j^t(z)
:=
-(\xi_j^t)^\top z.
\) Thus, the portfolio loss at period \(t\) has the discrete distribution
\( 
\left\{
\bigl(L_j^t(z),p_j^t\bigr)
\right\}_{j=1}^{S_t}.
\)
The expected-return vector is given by
\( 
\mu^t
:=
\mathbb E[\xi^t]
=
\sum_{j=1}^{S_t}p_j^t\xi_j^t.
\)

We consider the following multi-period portfolio problem:
\begin{equation}
\label{P}
\begin{aligned}
\min_{x=(x^1,\ldots,x^T)\in\mathcal X}\;
f(x)
:={}&
-\sum_{t=1}^{T}(\mu^t)^\top x^t
\\
&+
\lambda_1
\left[
\sum_{i=1}^{n}
c_i\left|x_i^1-(w_{\mathrm{prev}})_i\right|
+
\sum_{t=2}^{T}\sum_{i=1}^{n}
c_i\left|x_i^t-x_i^{t-1}\right|
\right]
+
\lambda_2
\sum_{t=1}^{T}\|x^t\|^2
\\[1mm]
\text{\rm s.t.}\;&
\operatorname{VaR}_{\alpha}
\left(-(\xi^t)^\top x^t\right)
\leq\tau,\;
t=1,\ldots,T.
\end{aligned}
\end{equation}
Here, \(\alpha\in(0,1)\) is the VaR confidence level,
\(\lambda_1>0\) is the weight assigned to transaction costs,
\(\lambda_2>0\) is the weight of the quadratic diversification
regularizer, \(c_i\geq0\) is the transaction-cost coefficient of
asset \(i\), and \(\tau\in\mathbb R\) is the maximum permitted VaR
level.

Define the feasible set of Problem~\eqref{P} by
\[
\Omega
:=
\left\{
x=(x^1,\ldots,x^T)\in\mathcal X:
\operatorname{VaR}_{\alpha}
\bigl(-(\xi^t)^\top x^t\bigr)
\leq\tau,\;
t=1,\ldots,T
\right\}.
\]
We also define
\( 
v^*
:=
\inf_{x\in\Omega}f(x)
\) 
and
\( 
S^*
:=
\operatorname*{arg\,min}_{x\in\Omega}f(x).
\)

\begin{theorem}
\label{thm:existence}

Assume that \(\Omega\neq\varnothing\). Then Problem~\eqref{P}
admits a global minimizer. Its optimal value \(v^*\) is finite, and
the global solution set \(S^*\) is nonempty and compact.

\end{theorem}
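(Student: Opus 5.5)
The approach is the Weierstrass extreme value theorem applied to the continuous objective \(f\) on the compact set \(\Omega\). The only real work is showing that \(\Omega\) is closed.

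\emph{Step 1: \(\mathcal X\) is compact.} The unit simplex \(X\) is closed, being the intersection of a hyperplane and finitely many closed half-spaces. It is bounded, since \(0\le z_i\le 1\) for every \(z\in X\). Hence \(X\) is compact, and so is \(\mathcal X=X^T\) as a finite product of compact sets.

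\emph{Step 2: \(\Omega\) is closed.} For each \(t\), apply Lemma~\ref{lem:VaRDC} with the scenarios \(\xi_j^t\) and probabilities \(p_j^t\). It gives that
\[
g_t(z):=\operatorname{VaR}_{\alpha}\bigl(-(\xi^t)^\top z\bigr)
\]
is a continuous (piecewise-affine DC) function on \(\mathbb R^n\). The map \(x\mapsto g_t(x^t)\) is the composition of \(g_t\) with the linear block projection \(x\mapsto x^t\), so it is continuous on \(\mathbb R^{nT}\). Therefore each sublevel set \(\{x: g_t(x^t)\le\tau\}\) is closed, and
\[
\Omega=\mathcal X\cap\bigcap_{t=1}^{T}\{x:g_t(x^t)\le\tau\}
\]
is a closed subset of the compact set \(\mathcal X\). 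So \(\Omega\) is compact, and it is nonempty by assumption.

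\emph{Step 3: \(f\) is continuous.} \(f\) is a sum of three pieces, each continuous on \(\mathbb R^{nT}\): a linear term, a nonnegative combination of absolute values of affine functions, and a quadratic term.

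\emph{Step 4: conclusion.} By the Weierstrass theorem, \(f\) attains its minimum on \(\Omega\). Hence \(S^*\neq\varnothing\), and \(v^*=\min_{x\in\Omega}f(x)\) is finite. To see that \(S^*\) is compact, write
\[
S^*=\Omega\cap f^{-1}(\{v^*\}).
\]
This is a closed set (by continuity of \(f\)) contained in the bounded set \(\Omega\), so it is compact.

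The only step that is not routine is the closedness of \(\Omega\), because VaR is generally neither convex nor lower semicontinuous. In the finite-scenario setting, however, Lemma~\ref{lem:VaRDC} already supplies full continuity of \(g_t\), so this step reduces to citing that lemma.
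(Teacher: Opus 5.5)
Your proof is correct and is essentially the paper's own argument: $\mathcal X=X^T$ is compact, Lemma~\ref{lem:VaRDC} makes each periodwise VaR map continuous so that $\Omega$ is closed and hence compact, Weierstrass gives attainment, and $S^*=\Omega\cap f^{-1}(\{v^*\})$ is a closed subset of a compact set. Your closing aside that VaR is generally not lower semicontinuous is slightly off: under the lower-quantile definition used here, $z\mapsto\operatorname{VaR}_{\alpha}(-\xi^\top z)$ is always lower semicontinuous, though not necessarily upper semicontinuous, so $\Omega$ would be closed even beyond the finite-scenario case; this does not affect your proof.
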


\begin{proof}

The simplex \(X\) is compact; hence, the Cartesian product
\(\mathcal X=X^T\) is compact.

Applying Lemma~\ref{lem:VaRDC} to each period
\(t=1,\ldots,T\), the mapping
\[
x^t
\longmapsto
\operatorname{VaR}_{\alpha}
\bigl(-(\xi^t)^\top x^t\bigr)
\]
is continuous. Therefore, \(\Omega\) is closed in \(\mathcal X\).
Since \(\mathcal X\) is compact, \(\Omega\) is compact.

The objective function \(f\) is continuous. Hence, by the Weierstrass
theorem, \(f\) attains its minimum on the nonempty compact set
\(\Omega\). Therefore, \(v^*\) is finite and attained, and \(S^*\) is
nonempty.

Moreover,
\[
S^*
=
\left\{
x\in\Omega:
f(x)=v^*
\right\}.
\]
The continuity of \(f\) implies that \(S^*\) is closed in \(\Omega\).
Consequently, \(S^*\) is compact.

\end{proof}
\subsection{Penalized DC Reformulation}
\label{subsec:penalized_dc}

We now construct a penalized DC formulation associated with
Problem~\eqref{P}. For \(x\in\mathcal X\), define the aggregate
VaR-constraint violation by
\begin{equation}
\label{eq:R_def}
R(x)
:=
\sum_{t=1}^{T}
\left(
\operatorname{VaR}_{\alpha}
\bigl(-(\xi^t)^\top x^t\bigr)-\tau
\right)_+.
\end{equation}
Each term in the definition of \(R(x)\) is nonnegative and measures
the violation of the corresponding VaR constraint. Therefore,
\( 
R(x)\geq0,\;
\forall x\in\mathcal X.
\) 
Moreover,
\(
R(x)=0
\)
if and only if
\(
x\in\Omega.
\)
For a penalty parameter \(\rho>0\), define
\begin{equation}
\label{eq:Phi_def}
\Phi_\rho(x)
:=
f(x)+\rho R(x),
\; x\in\mathcal X,
\end{equation}
and consider the penalized problem
\begin{equation}
\label{eq:penalty_problem}
\min_{x\in\mathcal X}\;
\Phi_\rho(x).
\end{equation}

To derive a DC decomposition of \(\Phi_\rho\), for
\(\beta\in(0,1)\) and \(t=1,\ldots,T\), define
\[
\psi_{\beta,t}(z)
:=
\operatorname{CVaR}_{\beta}
\bigl(-(\xi^t)^\top z\bigr).
\]
For the finite-scenario distribution introduced above, its
Rockafellar--Uryasev representation is
\begin{equation}
\label{eq:cvar_finite}
\psi_{\beta,t}(z)
=
\min_{u\in\mathbb R}
\left\{
u+
\frac{1}{1-\beta}
\sum_{j=1}^{S_t}p_j^t
\bigl(L_j^t(z)-u\bigr)_+
\right\};
\end{equation}
see
\cite{RockafellarUryasev2000,RockafellarUryasev2002}.
Consequently, \(\psi_{\beta,t}\) is a finite-valued convex
piecewise-affine function of \(z\).

For each period \(t\), define
\[
\bar\varepsilon_t
:=
\alpha-
\max_{\substack{
J\subseteq\{1,\ldots,S_t\}\\
\sum_{j\in J}p_j^t<\alpha
}}
\sum_{j\in J}p_j^t.
\]
By Lemma~\ref{lem:VaRDC},
\(
0<\bar\varepsilon_t\leq\alpha
\)
for every \(t=1,\ldots,T\). Since \(T\) is finite,
\(
\min_{1\leq t\leq T}\bar\varepsilon_t>0.
\)
Choose
\begin{equation}
\label{eq:gamma_choice}
0<\gamma
<
\min_{1\leq t\leq T}\bar\varepsilon_t
\end{equation}
and set
\( 
A_\gamma
:=
\frac{1-\alpha+\gamma}{\gamma},\;
B_\gamma
:=
\frac{1-\alpha}{\gamma}.
\) 
Then \(0<\gamma<\alpha\), and hence
\(\alpha-\gamma\in(0,1)\). Applying
Lemma~\ref{lem:VaRDC} to each planning period gives
\begin{equation}
\label{eq:loss_var_dc}
\operatorname{VaR}_{\alpha}
\bigl(-(\xi^t)^\top z\bigr)
=
A_\gamma\psi_{\alpha-\gamma,t}(z)
-
B_\gamma\psi_{\alpha,t}(z),
\;
t=1,\ldots,T,\; z\in\mathbb R^n.
\end{equation}

Substituting~\eqref{eq:loss_var_dc} into~\eqref{eq:R_def} gives
\[
R(x)
=
\sum_{t=1}^{T}
\left(
A_\gamma\psi_{\alpha-\gamma,t}(x^t)
-
B_\gamma\psi_{\alpha,t}(x^t)
-\tau
\right)_+.
\]
Using the identity
\( 
(a-b)_+
=
\max\{a,b\}-b,
\)
the penalized objective admits the DC decomposition
\[
\Phi_\rho(x)
=
G_\rho(x)-H_\rho(x),
\]
where
\begin{align}
G_\rho(x)
:={}&
f(x)
+
\rho\sum_{t=1}^{T}
\max\left\{
A_\gamma\psi_{\alpha-\gamma,t}(x^t)-\tau,\,
B_\gamma\psi_{\alpha,t}(x^t)
\right\},
\label{eq:G_def}
\\
H_\rho(x)
:={}&
\rho B_\gamma
\sum_{t=1}^{T}\psi_{\alpha,t}(x^t).
\label{eq:H_def}
\end{align}
Therefore, the penalized problem associated with
Problem~\eqref{P} has the DC formulation
\begin{equation}
\label{DCP}
\min_{x\in\mathcal X}\;
G_\rho(x)-H_\rho(x).
\end{equation}

Both \(G_\rho\) and \(H_\rho\) are finite-valued convex functions.
Moreover, \(G_\rho\) is strongly convex with modulus at least
\(2\lambda_2\), since \(f\) contains the term
\( 
\lambda_2\sum_{t=1}^{T}\|x^t\|^2
=
\lambda_2\|x\|^2.
\)

We next give a sufficient condition under which the penalized
Problem~\eqref{eq:penalty_problem} is globally equivalent to the
original constrained Problem~\eqref{P}. For a nonempty set
\(C\subseteq\mathbb R^{nT}\), define
\[
\operatorname{dist}(x,C)
:=
\inf_{y\in C}\|x-y\|.
\]

\begin{lemma}
\label{lem:global_error_bound}
Assume that \(\Omega\neq\varnothing\). Then there exists a constant
\(\kappa>0\) such that
\[
\operatorname{dist}(x,\Omega)
\leq
\kappa R(x),
\;
\forall x\in\mathcal X.
\]
\end{lemma}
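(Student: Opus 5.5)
The plan is to exploit the piecewise-affine structure from Lemma~\ref{lem:VaRDC} and reduce the claim to Hoffman's error bound for polyhedra, applied piece by piece on a finite polyhedral partition of \(\mathcal X\).

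\textbf{Step 1: polyhedral partition.} For each period \(t\), the function \(g_t(z):=\operatorname{VaR}_{\alpha}(-(\xi^t)^\top z)\) equals the \(k\)-th order statistic of the affine functions \(L_j^t(z)\), \(j=1,\ldots,S_t\), for an index \(k\) determined by the ordering of the scenario losses. Concretely, fix a permutation \(\pi\) of \(\{1,\ldots,S_t\}\) and let \(P_\pi:=\{z: L^t_{\pi(1)}(z)\le\cdots\le L^t_{\pi(S_t)}(z)\}\). On \(P_\pi\), \(g_t\) coincides with one affine function \(L^t_{\pi(k_\pi)}\), where \(k_\pi\) is the smallest \(k\) with \(\sum_{i\le k}p^t_{\pi(i)}\ge\alpha\). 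Taking products over \(t\) and intersecting with \(\mathcal X\) gives finitely many polyhedra \(Q_1,\ldots,Q_m\) covering \(\mathcal X\). On each \(Q_i\), both \(\Omega\cap Q_i\) and the violation \(R\) are described by affine data: \(R(x)=\sum_t(a_{i,t}^\top x^t-\tau)_+\) on \(Q_i\), and \(\Omega\cap Q_i=\{x\in Q_i: a_{i,t}^\top x^t\le\tau\ \forall t\}\).

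\textbf{Step 2: Hoffman on each piece.} Fix \(i\) such that \(\Omega_i:=\Omega\cap Q_i\neq\varnothing\). By Hoffman's lemma applied to the polyhedron \(\Omega_i\), there is \(\kappa_i\) with
\[
\operatorname{dist}(x,\Omega_i)\le\kappa_i\Big(\sum_t(a_{i,t}^\top x^t-\tau)_+ + \text{violation of the }Q_i\text{ constraints}\Big).
\]
For \(x\in Q_i\) the second term vanishes, so \(\operatorname{dist}(x,\Omega)\le\operatorname{dist}(x,\Omega_i)\le\kappa_iR(x)\).

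\textbf{Step 3: pieces missing \(\Omega\) (main obstacle).} The delicate case is \(x\in Q_i\) with \(\Omega\cap Q_i=\varnothing\). On such a piece, \(R\) is continuous and strictly positive on the compact set \(Q_i\), because \(R(x)=0\) iff \(x\in\Omega\). Hence \(R\ge r_i>0\) on \(Q_i\). Since \(\operatorname{dist}(x,\Omega)\le D:=\operatorname{diam}(\mathcal X)\) whenever \(\Omega\neq\varnothing\) (and \(\Omega\subseteq\mathcal X\)), we get \(\operatorname{dist}(x,\Omega)\le (D/r_i)R(x)\).

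\textbf{Step 4: combine.} Taking \(\kappa:=\max_i\max\{\kappa_i,D/r_i\}\) over the finitely many pieces (and \(\kappa>0\) by enlarging if necessary) yields the claim.

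An alternative route that avoids the explicit partition is to note that \(R\) is piecewise affine and that \(\Omega=R^{-1}(0)\cap\mathcal X\) is a finite union of polyhedra. One can then invoke the known global error bound for piecewise-affine functions on compact sets. I would nevertheless present the explicit argument above. The point I would check carefully is the case in Step 2 where \(\Omega\cap Q_i\) is nonempty but \(x\) lies far from it: this needs only Hoffman's bound within \(Q_i\), which holds globally on \(Q_i\), so there is no gap there.
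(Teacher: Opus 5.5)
Your proposal is correct and follows essentially the same route as the paper: a finite polyhedral covering of \(\mathcal X\), Hoffman's bound on the pieces that meet \(\Omega\), the compactness ratio (distance bound over \(\min R\)) on the pieces that miss \(\Omega\), and a maximum over the finitely many pieces. The only difference is in how the pieces are obtained. You build them explicitly from orderings of the scenario losses and apply Hoffman to the inequalities \(a_{i,t}^\top x^t\le\tau\), whereas the paper takes an abstract partition on which \(R\) itself is affine and applies Hoffman to the equation \(a_\ell^\top x+b_\ell=0\); your construction has the small advantage of not relying on the unproved existence of such a partition.
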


\begin{proof}
The function \(R\) is a nonnegative continuous piecewise-affine
function on the compact polyhedron \(\mathcal X\). Hence, there
exists a finite collection of nonempty compact polyhedra
\(\{P_\ell\}_{\ell=1}^{N}\) covering \(\mathcal X\) such that, for
each \(\ell\),
\[
R(x)=a_\ell^\top x+b_\ell,
\; x\in P_\ell,
\]
for some \(a_\ell\in\mathbb R^{nT}\) and \(b_\ell\in\mathbb R\).

Fix \(\ell\). Suppose first that
\(P_\ell\cap\Omega\neq\varnothing\). Since
\(R(x)=0\) if and only if \(x\in\Omega\),
\[
P_\ell\cap\Omega
=
\left\{
x\in P_\ell:
a_\ell^\top x+b_\ell=0
\right\}.
\]
By the Hoffman-type global error bound for polyhedral systems
\cite[Example~9.47]{RockafellarWets1998}, there exists
\(\kappa_\ell>0\) such that
\[
\operatorname{dist}
\bigl(x,P_\ell\cap\Omega\bigr)
\leq
\kappa_\ell
\left|a_\ell^\top x+b_\ell\right|,
\;
\forall x\in P_\ell.
\]
Since \(R\geq0\) and
\(R(x)=a_\ell^\top x+b_\ell\) on \(P_\ell\),
\(
\left|a_\ell^\top x+b_\ell\right|=R(x).
\)
Moreover, \(P_\ell\cap\Omega\subseteq\Omega\). Therefore,
\begin{equation}
\label{v_5aug_4}
\operatorname{dist}(x,\Omega)
\leq
\operatorname{dist}
\bigl(x,P_\ell\cap\Omega\bigr)
\leq
\kappa_\ell R(x),
\;
\forall x\in P_\ell.
\end{equation}

Suppose now that \(P_\ell\cap\Omega=\varnothing\). Then \(R(x)>0\)
for every \(x\in P_\ell\). By compactness and continuity,
\[
m_\ell
:=
\min_{x\in P_\ell}R(x)
>0,\;
D_\ell
:=
\max_{x\in P_\ell}
\operatorname{dist}(x,\Omega)
<+\infty.
\]

Since \(R(x)\geq m_\ell\) and
\(\operatorname{dist}(x,\Omega)\leq D_\ell\) on \(P_\ell\), we have
\begin{equation}
\label{v_5aug_5}
\operatorname{dist}(x,\Omega)
\leq
D_\ell
\leq
\frac{D_\ell}{m_\ell}R(x),
\;
\forall x\in P_\ell.
\end{equation}

For each \(\ell\), define
\[
c_\ell
:=
\begin{cases}
\kappa_\ell,
& P_\ell\cap\Omega\neq\varnothing,\\[1mm]
D_\ell/m_\ell,
& P_\ell\cap\Omega=\varnothing.
\end{cases}
\]
By~\eqref{v_5aug_4} and~\eqref{v_5aug_5},
\(
\operatorname{dist}(x,\Omega)
\leq
c_\ell R(x),
\;
\forall x\in P_\ell.
\)
Since the covering is finite, setting
\(
\kappa
:=
\max_{1\leq\ell\leq N}c_\ell
\)
gives
\(
\operatorname{dist}(x,\Omega)
\leq
\kappa R(x),
\;
\forall x\in\mathcal X.
\)
\end{proof}

\begin{theorem}
\label{thm:exact_penalty}
Assume that \(\Omega\neq\varnothing\). Let \(L_f>0\) be a Lipschitz
constant of \(f\) on \(\mathcal X\), and let \(\kappa>0\) be given by
Lemma~\ref{lem:global_error_bound}. If
\( 
\rho>L_f\kappa,
\) 
then Problem~\eqref{P} and the penalized
Problem~\eqref{eq:penalty_problem} have the same optimal value and
the same set of global minimizers.
\end{theorem}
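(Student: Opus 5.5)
The argument is the standard exact-penalty one. It combines the global error bound of Lemma~\ref{lem:global_error_bound} with the Lipschitz continuity of \(f\) on \(\mathcal X\), and uses the existence result of Theorem~\ref{thm:existence} together with compactness of \(\mathcal X\).

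\emph{Step 1: every point of \(\mathcal X\) is dominated by a feasible point.} Fix \(x\in\mathcal X\). Since \(\Omega\) is nonempty and compact (shown in the proof of Theorem~\ref{thm:existence}), there is a projection \(y\in\Omega\) with \(\|x-y\|=\operatorname{dist}(x,\Omega)\). Because \(\Omega\subseteq\mathcal X\), the Lipschitz property of \(f\) on \(\mathcal X\) applies. Combining it with Lemma~\ref{lem:global_error_bound} gives
\[
f(y)\leq f(x)+L_f\|x-y\|\leq f(x)+L_f\kappa R(x).
\]
Since \(R(y)=0\), this yields
\[
\Phi_\rho(y)=f(y)\leq f(x)+L_f\kappa R(x)\leq \Phi_\rho(x)-(\rho-L_f\kappa)R(x).
\]
If \(x\notin\Omega\), then \(R(x)>0\) and \(\rho>L_f\kappa\), so the inequality is strict: \(\Phi_\rho(y)<\Phi_\rho(x)\).

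\emph{Step 2: equality of optimal values.} On \(\Omega\) we have \(\Phi_\rho=f\), so \(\inf_{\mathcal X}\Phi_\rho\le v^*\). Conversely, Step 1 gives \(\Phi_\rho(x)\ge f(y)\ge v^*\) for every \(x\in\mathcal X\). Hence \(\inf_{\mathcal X}\Phi_\rho=v^*\), and this value is finite by Theorem~\ref{thm:existence}. The penalized infimum is attained because \(\Phi_\rho\) is continuous on the compact set \(\mathcal X\).

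\emph{Step 3: equality of minimizer sets.} If \(x\in S^*\), then \(x\in\Omega\) and \(\Phi_\rho(x)=f(x)=v^*\), so \(x\) is a global minimizer of \eqref{eq:penalty_problem}. Conversely, let \(x\) be a global minimizer of \eqref{eq:penalty_problem}. If \(x\notin\Omega\), Step 1 produces \(y\in\Omega\) with \(\Phi_\rho(y)<\Phi_\rho(x)\), which is a contradiction. Thus \(x\in\Omega\), and then \(f(x)=\Phi_\rho(x)=v^*\), so \(x\in S^*\).

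\emph{Main point of care.} The only delicate issue is obtaining the strict inequality that rules out infeasible penalized minimizers. It relies on the strict condition \(\rho>L_f\kappa\) and on \(R(x)>0\) off \(\Omega\), the latter following from the equivalence \(R(x)=0\iff x\in\Omega\). We should also note that a finite \(L_f\) exists, since \(f\) is a sum of linear, absolute-value, and quadratic terms on the compact set \(\mathcal X\). Finally, the projection \(y\) must exist and lie in \(\mathcal X\), which holds because \(\Omega\subseteq\mathcal X\) is closed.
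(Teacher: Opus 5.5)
Your proposal is correct and follows essentially the same argument as the paper. Both project \(x\) onto the compact set \(\Omega\), combine the Lipschitz bound with Lemma~\ref{lem:global_error_bound} to get \(\Phi_\rho(x)\geq f(y)+(\rho-L_f\kappa)R(x)\geq v^*+(\rho-L_f\kappa)R(x)\), and then prove the two inclusions between the minimizer sets. The only cosmetic difference is that you exclude infeasible penalized minimizers by exhibiting the strictly better feasible point \(y\), whereas the paper compares \(\Phi_\rho(\bar x)\) directly with \(v^*=\Phi_\rho(x^*)\).
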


\begin{proof}
Let \(x^*\in S^*\), which exists by
Theorem~\ref{thm:existence}. Fix \(x\in\mathcal X\), and choose
\(y\in\Omega\) such that
\( 
\|x-y\|
=
\operatorname{dist}(x,\Omega),
\) 
which exists because \(\Omega\) is nonempty and compact. By the
Lipschitz continuity of \(f\) and
Lemma~\ref{lem:global_error_bound},
\[
f(x)
\geq
f(y)-L_f\|x-y\|
\geq
f(y)-L_f\kappa R(x).
\]
Since \(y\in\Omega\), we have \(f(y)\geq v^*\). Therefore,
\begin{equation}
\label{eq:penalty_lower_bound}
\Phi_\rho(x)
=
f(x)+\rho R(x)
\geq
v^*+(\rho-L_f\kappa)R(x),
\;
\forall x\in\mathcal X.
\end{equation}

We first show that \(x^*\) is a global minimizer of the penalized
problem. Since \(x^*\in S^*\subseteq\Omega\),
\[
R(x^*)=0
\;\text{and}\;
\Phi_\rho(x^*)
=
f(x^*)
=
v^*.
\]
Moreover, since \(\rho>L_f\kappa\) and \(R(x)\geq0\),
\eqref{eq:penalty_lower_bound} gives
\(
\Phi_\rho(x)\geq v^*,\;
\forall x\in\mathcal X.
\)
Hence, \(x^*\) is a global minimizer of the penalized problem.

Conversely, let \(\bar x\) be a global minimizer of the penalized
problem. Since \(x^*\in\mathcal X\), the optimality of \(\bar x\)
gives
\( 
\Phi_\rho(\bar x)
\leq
\Phi_\rho(x^*)
=
v^*.
\) 
Suppose that \(\bar x\notin\Omega\). Then \(R(\bar x)>0\), and
\eqref{eq:penalty_lower_bound}, together with
\(\rho>L_f\kappa\), yields
\[
\Phi_\rho(\bar x)
\geq
v^*+(\rho-L_f\kappa)R(\bar x)
>
v^*,
\]
which contradicts \(\Phi_\rho(\bar x)\leq v^*\). Therefore,
\(\bar x\in\Omega\), and hence \(R(\bar x)=0\). It follows that
\(
f(\bar x)
=
\Phi_\rho(\bar x)
\leq
v^*.
\)
Since \(\bar x\in\Omega\) and \(v^*\) is the minimum of \(f\) over
\(\Omega\), we also have \(f(\bar x)\geq v^*\). Thus,
\( 
f(\bar x)=v^*,
\) 
and consequently \(\bar x\in S^*\). 

Therefore, the original and
penalized problems have the same set of global minimizers. Since the
penalty vanishes at every point of this set, both problems also have
the same optimal value \(v^*\).
\end{proof}

\section{Projected Inertial BDCA and Convergence Analysis}
\label{sec4}

In this section, we develop a projected inertial boosted
difference-of-convex algorithm for solving the penalized DC
Problem~\eqref{DCP}. The proposed method builds on the BDCA framework
for single-period VaR-constrained portfolio optimization developed by
Thormann et al.~\cite{ThormannVuongZemkoho2026}. We first construct the
strongly convex DC decomposition and the CVaR subgradient required to
implement the method. We then analyze its well-definedness, descent
properties, and convergence to critical points. Finally, under an
additional local no-ties condition, we establish whole-sequence and
\(R\)-linear convergence.
\subsection{Projected Inertial BDCA}
\label{subsec:proposed_algorithm}

We first introduce an equivalent DC decomposition with strongly convex
components. Fix \(\nu>0\) and define
\begin{equation}
\label{eq:regularized_dc}
\widehat G_\rho(x)
:=
G_\rho(x)+\frac{\nu}{2}\|x\|^2,
\;
\widehat H_\rho(x)
:=
H_\rho(x)+\frac{\nu}{2}\|x\|^2.
\end{equation}
Since the same quadratic term is added to both components,
\begin{equation}
\label{eq:equivalent_dc}
\Phi_\rho
=
G_\rho-H_\rho
=
\widehat G_\rho-\widehat H_\rho.
\end{equation}
Moreover, \(\widehat G_\rho\) and \(\widehat H_\rho\) are strongly
convex with modulus \(2\lambda_2+\nu\) and \(\nu\), respectively.
Since \(\mathcal X\) is nonempty, compact, and convex, for every
\(u\in\mathbb R^{nT}\), the convex problem
\[
\min_{x\in\mathcal X}
\left\{
\widehat G_\rho(x)-\langle u,x\rangle
\right\}
\]
has a unique solution. Furthermore,
\[
\partial\widehat G_\rho(x)
=
\partial G_\rho(x)+\nu x,
\;
\partial\widehat H_\rho(x)
=
\partial H_\rho(x)+\nu x.
\]
Therefore,
\begin{equation}
\label{eq:criticality_equivalence}
\begin{aligned}
&\partial\widehat H_\rho(x)
\cap
\bigl(
\partial\widehat G_\rho(x)+N_{\mathcal X}(x)
\bigr)
\neq\varnothing
\;\Longleftrightarrow\;
\partial H_\rho(x)
\cap
\bigl(
\partial G_\rho(x)+N_{\mathcal X}(x)
\bigr)
\neq\varnothing.
\end{aligned}
\end{equation}
Hence, the two decompositions have the same critical points.

We next derive an explicit scenario-based subgradient formula for
\(H_\rho\). 
\begin{lemma}[Finite-scenario CVaR subgradient]
\label{lem:subCVaR}

Fix \(t\in\{1,\ldots,T\}\), \(\beta\in(0,1)\), and
\(z\in\mathbb R^n\). Let \(u^*\) be any solution of the scalar
problem in~\eqref{eq:cvar_finite}. Then there exist
\(\vartheta_j\), \(j=1,\ldots,S_t\), satisfying
\begin{equation}
\label{eq:theta_conditions}
\vartheta_j\in
\begin{cases}
\{1\}, & L_j^t(z)>u^*,\\
[0,1], & L_j^t(z)=u^*,\\
\{0\}, & L_j^t(z)<u^*,
\end{cases}
\;
\sum_{j=1}^{S_t}p_j^t\vartheta_j=1-\beta.
\end{equation}
For any such choice,
\begin{equation}
\label{eq:cvar_subgradient}
g_{\beta,t}(z)
:=
-\frac{1}{1-\beta}
\sum_{j=1}^{S_t}
p_j^t\vartheta_j\xi_j^t
\in
\partial\psi_{\beta,t}(z).
\end{equation}
Moreover,
\( 
\partial H_\rho(x)
=
\rho B_\gamma
\left(
\partial\psi_{\alpha,1}(x^1)
\times\cdots\times
\partial\psi_{\alpha,T}(x^T)
\right).
\)

\end{lemma}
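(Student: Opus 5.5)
Our plan is to treat the scalar problem in~\eqref{eq:cvar_finite} through its one-dimensional optimality condition, and then pass to \(z\) by a max-type (or Danskin-type) argument. For fixed \(z\), write
\[
F(z,u):=u+\frac{1}{1-\beta}\sum_{j=1}^{S_t}p_j^t\bigl(L_j^t(z)-u\bigr)_+ .
\]
This function is jointly convex in \((z,u)\), since each \(L_j^t\) is affine. In \(u\) it is coercive: it tends to \(+\infty\) as \(u\to+\infty\), and also as \(u\to-\infty\) because \(1-\frac{1}{1-\beta}<0\). Hence a minimizer \(u^*\) exists. Fermat's rule gives \(0\in\partial_uF(z,u^*)\). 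The subdifferential of \(u\mapsto(L_j^t(z)-u)_+\) is \(\{-1\}\), \([-1,0]\) or \(\{0\}\) according as \(L_j^t(z)>u^*\), \(=u^*\) or \(<u^*\). By the sum rule for finitely many finite convex functions, \(0\in\partial_uF\) therefore means there are \(\vartheta_j\) as in~\eqref{eq:theta_conditions} with
\[
1-\frac{1}{1-\beta}\sum_j p_j^t\vartheta_j=0,
\]
which is exactly the normalization \(\sum_j p_j^t\vartheta_j=1-\beta\). This gives existence.

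For the subgradient inclusion, we fix any such \(\vartheta\) and use a direct support-inequality argument rather than invoking a marginal-function theorem. For arbitrary \(y\in\mathbb R^n\), we have \(\psi_{\beta,t}(y)\ge F(y,u^*)\). Moreover, \((a)_+\ge\vartheta a\) for all \(a\) whenever \(\vartheta\in[0,1]\), with equality at \(a=L_j^t(z)-u^*\) by the case conditions on \(\vartheta_j\). Hence
\[
F(y,u^*)\ge u^*+\frac{1}{1-\beta}\sum_j p_j^t\vartheta_j\bigl(L_j^t(y)-u^*\bigr).
\]
By the normalization, the \(u^*\) terms cancel, and the right-hand side equals
\[
\frac{1}{1-\beta}\sum_j p_j^t\vartheta_j L_j^t(y)
=\frac{1}{1-\beta}\sum_j p_j^t\vartheta_j L_j^t(z)+\langle g_{\beta,t}(z),y-z\rangle .
\]
At \(y=z\) the same chain holds with equalities, so the first term equals \(F(z,u^*)=\psi_{\beta,t}(z)\). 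This yields the subgradient inequality, and hence~\eqref{eq:cvar_subgradient}.

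For the final claim, \(H_\rho\) is a positive multiple of the separable sum \(\sum_t\psi_{\alpha,t}(x^t)\), with \(\rho B_\gamma>0\). The subdifferential of a separable sum of finite convex functions in disjoint blocks of variables is the Cartesian product of the block subdifferentials. The inclusion \(\supseteq\) follows by summing the block inequalities. For \(\subseteq\), given \(v\in\partial H\), we test with \(y\) that differs from \(x\) in only one block. Positive scaling then commutes with \(\partial\), which gives the stated product formula.

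The main subtlety is the existence step: we must justify that the minimizer \(u^*\) exists and that Fermat's rule with the sum rule produces the normalization exactly, including the tie set where \(\vartheta_j\in[0,1]\). The support-inequality step is elementary once \(\vartheta\) is fixed.
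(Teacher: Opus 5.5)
There is a genuine error in the subgradient step, which is the step you called elementary; the existence argument and the separable-sum argument are fine. You write \(\psi_{\beta,t}(y)\ge F(y,u^*)\). But \(\psi_{\beta,t}(y)=\min_u F(y,u)\), so the true relation is \(\psi_{\beta,t}(y)\le F(y,u^*)\). The inequality is strict whenever \(u^*\), which minimizes \(F(z,\cdot)\), does not also minimize \(F(y,\cdot)\).

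A concrete instance: take \(n=1\), \(S_t=2\), \(p^t=(\tfrac12,\tfrac12)\), \(\xi_1^t=1\), \(\xi_2^t=-1\), \(\beta=\tfrac12\). Then \(F(y,u)=u+(-y-u)_{+}+(y-u)_{+}\) and \(\psi_{\beta,t}(y)=|y|\). At \(z=1\), \(u^*=1\) is a minimizer, yet \(F(\tfrac12,1)=1>\tfrac12=\psi_{\beta,t}(\tfrac12)\). So your chain only bounds \(F(y,u^*)\) from below, which gives no information about \(\psi_{\beta,t}(y)\).

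The repair uses only ingredients you already have, but the normalization must be exploited for every \(u\), not just at \(u^*\). For arbitrary \(u\in\mathbb R\), the inequality \((a)_{+}\ge\vartheta_j a\) gives
\[
F(y,u)\ge u\Bigl(1-\frac{1}{1-\beta}\sum_j p_j^t\vartheta_j\Bigr)+\frac{1}{1-\beta}\sum_j p_j^t\vartheta_j L_j^t(y)=\frac{1}{1-\beta}\sum_j p_j^t\vartheta_j L_j^t(y).
\]
This minorant does not depend on \(u\). Take the infimum over \(u\), then use your (correct) evaluation at \(y=z\) and the linear expansion. This yields \(\psi_{\beta,t}(y)\ge\psi_{\beta,t}(z)+\langle g_{\beta,t}(z),y-z\rangle\).

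The paper encodes exactly this uniformity in \(u\). It shows that the second component of \((g_{\beta,t}(z),0)\in\partial F_{\beta,t}(z,u^*)\) vanishes, so the subgradient inequality holds for all \(v\), and only then takes the infimum over \(v\).

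With this correction, your argument is a slightly more elementary version of the paper's. It replaces the joint subdifferential sum and chain rule in \((z,u)\) by the scalar inequality \((a)_{+}\ge\vartheta a\) for \(\vartheta\in[0,1]\). Your coercivity proof that \(u^*\) exists is extra, since the statement takes \(u^*\) as given.
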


\begin{proof}

Define
\[
F_{\beta,t}(z,u)
:=
u+
\frac{1}{1-\beta}
\sum_{j=1}^{S_t}
p_j^t\bigl(L_j^t(z)-u\bigr)_+.
\]
Then
\( 
\psi_{\beta,t}(z)
=
\min_{u\in\mathbb R}F_{\beta,t}(z,u).
\) Recall that
\[
\partial[a]_+
=
\begin{cases}
\{1\}, & a>0,\\
[0,1], & a=0,\\
\{0\}, & a<0.
\end{cases}
\]
For each \(j\), set
\( 
\Theta_j
:=
\partial
\bigl[L_j^t(z)-u^*\bigr]_+.
\)
Thus,
\[
\Theta_j
=
\begin{cases}
\{1\}, & L_j^t(z)>u^*,\\
[0,1], & L_j^t(z)=u^*,\\
\{0\}, & L_j^t(z)<u^*.
\end{cases}
\]

Since \(u^*\) minimizes \(F_{\beta,t}(z,\cdot)\), its optimality
condition is
\[
0
\in
\partial_u F_{\beta,t}(z,u^*)
=
1-
\frac{1}{1-\beta}
\sum_{j=1}^{S_t}p_j^t\Theta_j.
\]
Therefore, there exist
\(\vartheta_j\in\Theta_j\) such that
\( 
1-
\frac{1}{1-\beta}
\sum_{j=1}^{S_t}p_j^t\vartheta_j
=
0,
\) 
or equivalently,
\( 
\sum_{j=1}^{S_t}p_j^t\vartheta_j
=
1-\beta.
\) 
This proves the existence of the coefficients satisfying
\eqref{eq:theta_conditions}.

Now fix any such choice of \(\vartheta_j\). Since
\(
L_j^t(z)-u
=
-(\xi_j^t)^\top z-u,
\)
the subgradient of this affine expression with respect to the pair
\((z,u)\) is
\(
\bigl(-\xi_j^t,-1\bigr).
\) 
Hence,
\[
\vartheta_j
\bigl(-\xi_j^t,-1\bigr)
\in
\partial_{(z,u)}
\bigl(L_j^t(z)-u\bigr)_+
\big|_{u=u^*}.
\]
Using the convex subdifferential sum rule, we obtain
\[
\left(
-\frac{1}{1-\beta}
\sum_{j=1}^{S_t}p_j^t\vartheta_j\xi_j^t,\,
1-\frac{1}{1-\beta}
\sum_{j=1}^{S_t}p_j^t\vartheta_j
\right)
\in
\partial F_{\beta,t}(z,u^*).
\]
By~\eqref{eq:cvar_subgradient}, the first component is
\(g_{\beta,t}(z)\), while the mass condition in
\eqref{eq:theta_conditions} makes the second component equal to zero.
Therefore,
\(
\bigl(g_{\beta,t}(z),0\bigr)
\in
\partial F_{\beta,t}(z,u^*).
\) By the definition of the convex subdifferential, for every
\(y\in\mathbb R^n\) and \(v\in\mathbb R\),
\[
F_{\beta,t}(y,v)
\geq
F_{\beta,t}(z,u^*)
+
\left\langle
\bigl(g_{\beta,t}(z),0\bigr),
(y-z,v-u^*)
\right\rangle.
\]
Thus,
\( 
F_{\beta,t}(y,v)
\geq
F_{\beta,t}(z,u^*)
+
\langle g_{\beta,t}(z),y-z\rangle.
\) 
Taking the infimum over \(v\in\mathbb R\) yields
\[
\psi_{\beta,t}(y)
\geq
\psi_{\beta,t}(z)
+
\langle g_{\beta,t}(z),y-z\rangle.
\]
Hence,
\(
g_{\beta,t}(z)
\in
\partial\psi_{\beta,t}(z).
\)  Finally, the formula for \(\partial H_\rho\) follows directly from
\eqref{eq:H_def} and the subdifferential rule for separable convex
functions.

\end{proof}
We now present the proposed projected inertial BDCA, where
\(\Pi_{\mathcal X}\) denotes the Euclidean projection onto
\(\mathcal X\).

\begin{algorithm}[H]
\caption{Projected Inertial Boosted Difference-of-Convex Algorithm}
\label{MainAlgo}

\begin{algorithmic}[1]

\State \textbf{Input:}
\(\rho>0\),
\(x^{-1}=x^0\in\mathcal X\),
\(\theta\in[0,1)\),
\(\nu>0\),
\(\sigma\in(0,\lambda_2+\nu)\),
\(\eta\in(0,1)\),
\(\bar\lambda>1\), and
\(\varepsilon_{\mathrm{tol}}\geq0\).

\For{\(k=0,1,2,\ldots\)}

\State Set
\[
w^k
=
\Pi_{\mathcal X}
\bigl(
x^k+\theta(x^k-x^{k-1})
\bigr).
\]

\If{\(\Phi_\rho(w^k)>\Phi_\rho(x^k)\)}
\State Set \(w^k=x^k\).
\EndIf

\State For each \(t=1,\ldots,T\), compute
\[
g_{\alpha,t}(w^{k,t})
\in
\partial\psi_{\alpha,t}(w^{k,t})
\]
using~\eqref{eq:theta_conditions} and
\eqref{eq:cvar_subgradient}.\State Set
\[
s^k
=
\rho B_\gamma
\bigl(
g_{\alpha,1}(w^{k,1}),\ldots,
g_{\alpha,T}(w^{k,T})
\bigr)
\in
\partial H_\rho(w^k)
\]
and
\[
u^k
=
s^k+\nu w^k
\in
\partial\widehat H_\rho(w^k).
\]

\State Compute the unique solution
\[
y^k
=
\operatorname*{arg\,min}_{x\in\mathcal X}
\left\{
\widehat G_\rho(x)-\langle u^k,x\rangle
\right\}.
\]

\State Set \(d^k=y^k-w^k\).

\If{\(\|d^k\|=0\)}
\State \textbf{return} \(w^k\).
\EndIf

\If{\(\varepsilon_{\mathrm{tol}}>0\) and
\(\|d^k\|
\leq
\varepsilon_{\mathrm{tol}}
\max\{1,\|w^k\|\}\)}
\State \textbf{return} \(w^k\).
\EndIf

\State Compute
\[
\lambda_k^{\max}
=
\max
\left\{
\lambda\geq1:
w^k+\lambda d^k\in\mathcal X
\right\}.
\]

\State Set
\[
\lambda_k
=
\min\{\bar\lambda,\lambda_k^{\max}\}.
\]

\While{\(\lambda_k>1\) and
\(\Phi_\rho(w^k+\lambda_kd^k)>
\min\{
\Phi_\rho(y^k),
\Phi_\rho(w^k)
-\sigma\lambda_k^2\|d^k\|^2
\}\)}

\State Set
\[
\lambda_k
=
\max\{1,\eta\lambda_k\}.
\]

\EndWhile

\State Set
\[
x^{k+1}
=
w^k+\lambda_kd^k.
\]

\EndFor

\end{algorithmic}
\end{algorithm}

\subsection{Convergence Analysis}
\label{subsec:convergence_analysis}

We now analyze the convergence properties of
Algorithm~\ref{MainAlgo}. We first prove that the method is well
defined, that the generated objective values decrease monotonically,
and that every accumulation point is critical. We then establish
whole-sequence and \(R\)-linear convergence under the local no-ties
condition. 

Define the indicator function of \(\mathcal X\) by
\[
\delta_{\mathcal X}(x)
:=
\begin{cases}
0,       & x\in\mathcal X,\\
+\infty, & x\notin\mathcal X.
\end{cases}
\]
For brevity, write
\[
\widehat Q_\rho
:=
\widehat G_\rho+\delta_{\mathcal X}.
\]
Since \(\widehat G_\rho\) is finite and continuous on
\(\mathbb R^{nT}\), the convex subdifferential sum rule gives
\[
\partial\widehat Q_\rho(x)
=
\partial\widehat G_\rho(x)+N_{\mathcal X}(x),
\; x\in\mathcal X.
\]

\begin{lemma}\label{lem:welldefined}
At every iteration before termination, Algorithm~\ref{MainAlgo} is
well defined. More precisely:
\begin{enumerate}
\item[\rm(i)]
the convex subproblem defining \(y^k\) has a unique solution;
\item[\rm(ii)]
\[
\Phi_\rho(y^k)
\leq
\Phi_\rho(w^k)-(\lambda_2+\nu)\|d^k\|^2;
\]
\item[\rm(iii)]
if the algorithm reaches the line-search step, then
\(\lambda_k^{\max}\) exists, satisfies
\(\lambda_k^{\max}\geq1\), and the Armijo backtracking terminates after
finitely many reductions with
\[
1\leq\lambda_k
\leq\min\{\bar\lambda,\lambda_k^{\max}\}.
\]
Consequently, \(x^{k+1}\in\mathcal X\).\end{enumerate}
\end{lemma}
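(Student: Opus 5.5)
The plan is to prove the three items in order. Throughout, I maintain by induction that \(x^k\in\mathcal X\) and \(w^k\in\mathcal X\). The base case holds because \(x^0\in\mathcal X\). For the inductive step, \(w^k\) is either a projection onto \(\mathcal X\) or is reset to \(x^k\), so it lies in \(\mathcal X\) whenever \(x^k\) does. Item (iii) will then show \(x^{k+1}\in\mathcal X\), which closes the induction.

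For (i), I use the fact that \(\widehat G_\rho\) is finite, continuous and strongly convex with modulus \(2\lambda_2+\nu\), and that \(\mathcal X\) is nonempty, compact and convex. Hence \(\widehat Q_\rho-\langle u^k,\cdot\rangle\) is proper, lower semicontinuous and strongly convex, so it has exactly one minimizer; this was already noted after \eqref{eq:regularized_dc}.

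For (ii), I run the standard DCA descent argument using both strong convexity moduli. Since \(y^k\) minimizes the strongly convex function \(\widehat Q_\rho-\langle u^k,\cdot\rangle\), and \(w^k\in\mathcal X\), we get
\[
\widehat G_\rho(w^k)-\langle u^k,w^k\rangle\geq \widehat G_\rho(y^k)-\langle u^k,y^k\rangle+\tfrac{2\lambda_2+\nu}{2}\|d^k\|^2 .
\]
This follows from the optimality condition \(0\in\partial\widehat Q_\rho(y^k)-u^k\) combined with the strong-convexity subgradient inequality. Next, since \(u^k\in\partial\widehat H_\rho(w^k)\) by Lemma~\ref{lem:subCVaR} and \(\widehat H_\rho\) has modulus \(\nu\),
\[
\widehat H_\rho(y^k)\geq\widehat H_\rho(w^k)+\langle u^k,d^k\rangle+\tfrac{\nu}{2}\|d^k\|^2 .
\]
Subtracting the second inequality from the first makes the linear terms cancel. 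Using \eqref{eq:equivalent_dc}, this gives \(\Phi_\rho(y^k)\leq\Phi_\rho(w^k)-\tfrac{2\lambda_2+2\nu}{2}\|d^k\|^2\), which is exactly the claimed bound with constant \(\lambda_2+\nu\).

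For (iii), the line search is reached only if \(d^k\neq0\). Consider the set \(\Lambda_k:=\{\lambda\geq1:w^k+\lambda d^k\in\mathcal X\}\). It contains \(1\) because \(w^k+d^k=y^k\in\mathcal X\), and it is closed because \(\mathcal X\) is closed. It is bounded because \(\mathcal X\) is compact and \(d^k\neq0\), so that \(\|w^k+\lambda d^k\|\to\infty\) as \(\lambda\to\infty\). Hence \(\lambda_k^{\max}=\max\Lambda_k\) exists and \(\lambda_k^{\max}\geq1\). Moreover, \(w^k\in\mathcal X\) and \(w^k+\lambda_k^{\max}d^k\in\mathcal X\), so by convexity \(w^k+\lambda d^k\in\mathcal X\) for every \(\lambda\in[0,\lambda_k^{\max}\bigr]\); in particular \(\Lambda_k=[1,\lambda_k^{\max}]\).

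It remains to show that the backtracking loop terminates. The initial trial value \(\min\{\bar\lambda,\lambda_k^{\max}\}\) lies in \([1,\lambda_k^{\max}]\). Each update \(\lambda\mapsto\max\{1,\eta\lambda\}\) keeps \(\lambda\) in \([1,\bar\lambda]\) and does not increase it. After \(m\) reductions we have \(\lambda\leq\max\{1,\eta^m\bar\lambda\}\). Since \(\eta\in(0,1)\), we have \(\eta^m\bar\lambda\leq1\) once \(m\geq\log\bar\lambda/\log(1/\eta)\), so \(\lambda_k=1\) after at most that many reductions. At that point the loop guard \(\lambda_k>1\) fails, so the loop stops after finitely many steps regardless of the Armijo test. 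The final \(\lambda_k\) therefore satisfies \(1\leq\lambda_k\leq\min\{\bar\lambda,\lambda_k^{\max}\}\), and by the segment property above \(x^{k+1}=w^k+\lambda_kd^k\in\mathcal X\). This completes the induction.

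The only delicate points are the bookkeeping ones. First, one must carry membership \(w^k\in\mathcal X\) through the induction, since it is needed both in (ii), where \(w^k\) is used as a competitor in the subproblem, and in (iii), for the convexity argument on the segment. Second, in (iii) one must use \(d^k\neq0\) to get boundedness of \(\Lambda_k\); this is exactly why the early-return tests on \(\|d^k\|\) precede the line search. The remaining steps are routine.
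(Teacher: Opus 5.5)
Your proof is correct. Parts (i) and (ii) follow the paper's argument. The only cosmetic difference in (ii) is that you fold the normal-cone term into the quadratic growth of the strongly convex function \(\widehat Q_\rho-\langle u^k,\cdot\rangle\) at its minimizer \(y^k\). The paper instead writes \(u^k-v^k\in\partial\widehat G_\rho(y^k)\) with \(v^k\in N_{\mathcal X}(y^k)\) and then discards \(-\langle v^k,w^k-y^k\rangle\geq0\). Your explicit induction keeping \(w^k\in\mathcal X\) makes precise something the paper uses tacitly.

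Part (iii) differs from the paper in two respects.

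First, you obtain \(\lambda_k^{\max}\) abstractly from closedness, boundedness, and convexity of \(\mathcal X\), which works for any compact convex feasible set. The paper instead uses the simplex structure: each block of \(d^k\) sums to zero and so has a negative entry. This gives the explicit ratio test \(\lambda_k^{\max}=\min\{-w_i^{k,t}/d_i^{k,t}: d_i^{k,t}<0\}\), which is what an implementation computes.

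Second, you deduce termination from the loop guard \(\lambda_k>1\) alone, ``regardless of the Armijo test.'' That suffices for the statement as written. The paper, however, also checks that \(\lambda_k=1\) passes the test: by (ii) and \(\sigma<\lambda_2+\nu\), \(\Phi_\rho(w^k+d^k)=\Phi_\rho(y^k)=\min\{\Phi_\rho(y^k),\Phi_\rho(w^k)-\sigma\|d^k\|^2\}\). This guarantees that every accepted step, including one forced down to \(1\), satisfies \(\Phi_\rho(x^{k+1})\leq\Phi_\rho(w^k)-\sigma\lambda_k^2\|d^k\|^2\). Proposition~\ref{prop:descent} relies on exactly this when it invokes ``the Armijo acceptance condition,'' so you should add that one line.
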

\begin{proof}

The function
\( 
x\longmapsto
\widehat G_\rho(x)-\langle u^k,x\rangle
\) 
is continuous and \((2\lambda_2+\nu)\)-strongly convex. Since
\(\mathcal X\) is nonempty and compact, the subproblem defining
\(y^k\) has a minimizer. Since \(\mathcal X\) is also convex and the
objective is strongly convex, this minimizer is unique. This proves
\({\rm(i)}\).

\medskip
The optimality condition for the convex subproblem is
\[
0
\in
\partial\widehat G_\rho(y^k)
-u^k
+N_{\mathcal X}(y^k).
\]
Hence, there exists \(v^k\in N_{\mathcal X}(y^k)\) such that
\( 
u^k-v^k
\in
\partial\widehat G_\rho(y^k).
\) 
Since \(\widehat G_\rho\) is
\((2\lambda_2+\nu)\)-strongly convex,
\begin{equation} \label{v_3aug_1}
\begin{aligned}
\widehat G_\rho(w^k)-\widehat G_\rho(y^k)
\geq{}&
\langle u^k-v^k,w^k-y^k\rangle
+ 
\left(\lambda_2+\frac{\nu}{2}\right)
\|w^k-y^k\|^2.
\end{aligned}
\end{equation}
On the other hand,
\(u^k\in\partial\widehat H_\rho(w^k)\), and
\(\widehat H_\rho\) is \(\nu\)-strongly convex. Therefore,
\begin{equation}\label{v_3aug_2}
\widehat H_\rho(y^k)-\widehat H_\rho(w^k)
\geq
\langle u^k,y^k-w^k\rangle
+
\frac{\nu}{2}\|y^k-w^k\|^2.
\end{equation}
Adding the inequalities~\eqref{v_3aug_1}--\eqref{v_3aug_2}, and using
\(
\Phi_\rho
=
\widehat G_\rho-\widehat H_\rho
\)
gives
\[
\begin{aligned}
\Phi_\rho(w^k)-\Phi_\rho(y^k)
\geq{}&
-\langle v^k,w^k-y^k\rangle
+
(\lambda_2+\nu)\|w^k-y^k\|^2.
\end{aligned}
\]
Moreover, \(v^k\in N_{\mathcal X}(y^k)\) and
\(w^k\in\mathcal X\), so
\(
-\langle v^k,w^k-y^k\rangle\geq0.
\)
Since \(d^k=y^k-w^k\), we obtain
\[
\Phi_\rho(y^k)
\leq
\Phi_\rho(w^k)
-
(\lambda_2+\nu)\|d^k\|^2.
\]
This proves \({\rm(ii)}\).

We finally prove \({\rm(iii)}\). Suppose that \(d^k\neq0\) and that
the algorithm proceeds to the line-search step. Recall that
\(\mathcal X=X^T\). Hence,
\[
w^k+\lambda d^k\in\mathcal X
\;\Longleftrightarrow\;
w^{k,t}+\lambda d^{k,t}\in X,
\;
t=1,\ldots,T.
\]
Therefore, to determine \(\lambda_k^{\max}\), it suffices to verify,
for every \(t=1,\ldots,T\), that
\[
\sum_{i=1}^{n}
\bigl(w_i^{k,t}+\lambda d_i^{k,t}\bigr)=1
\quad\text{and}\quad
w_i^{k,t}+\lambda d_i^{k,t}\geq0,
\;
i=1,\ldots,n.
\]

Since \(w^{k,t},y^{k,t}\in X\) and
\(d^{k,t}=y^{k,t}-w^{k,t}\), for every \(t=1,\ldots,T\),
\[
\sum_{i=1}^{n}d_i^{k,t}
=
\sum_{i=1}^{n}y_i^{k,t}
-
\sum_{i=1}^{n}w_i^{k,t}
=
0.
\]
Hence, for every \(\lambda\in\mathbb R\),
\(
\sum_{i=1}^{n}
\bigl(w_i^{k,t}+\lambda d_i^{k,t}\bigr)
=
1.
\)
Moreover, if \(d_i^{k,t}\geq0\), then
\(
w_i^{k,t}+\lambda d_i^{k,t}\geq0,
\;
\lambda\geq0.
\)
If \(d_i^{k,t}<0\), then
\[
w_i^{k,t}+\lambda d_i^{k,t}\geq0
\quad\Longleftrightarrow\quad
\lambda
\leq
-\frac{w_i^{k,t}}{d_i^{k,t}}.
\]

Since \(d^k\neq0\) and each block of \(d^k\) has zero sum, \(d^k\)
has at least one negative component. Therefore,
\[
\lambda_k^{\max}
=
\min_{\substack{
t=1,\ldots,T,\;i=1,\ldots,n\\
d_i^{k,t}<0
}}
\left\{
-\frac{w_i^{k,t}}{d_i^{k,t}}
\right\}
\]
is well defined and finite. Since \(w^k+d^k=y^k\in\mathcal X\), the value \(\lambda=1\) is
feasible, and hence \(\lambda_k^{\max}\geq1\). Because
\(\bar\lambda>1\), the initial trial step size
\[
\lambda_k=\min\{\bar\lambda,\lambda_k^{\max}\}
\]
belongs to \([1,\lambda_k^{\max}]\). Each backtracking update
\(\lambda_k\leftarrow\max\{1,\eta\lambda_k\}\) preserves this
inclusion. Hence, the corresponding trial point
\(w^k+\lambda_kd^k\) belongs to \(\mathcal X\).

At \(\lambda_k=1\), we have \(w^k+\lambda_kd^k=y^k\). Moreover, by
\({\rm(ii)}\) and \(\sigma<\lambda_2+\nu\),
\[
\Phi_\rho(y^k)
\leq
\Phi_\rho(w^k)
-
(\lambda_2+\nu)\|d^k\|^2
\leq
\Phi_\rho(w^k)-\sigma\|d^k\|^2.
\]
Therefore,
\[
\Phi_\rho(w^k+d^k)
=
\min\left\{
\Phi_\rho(y^k),
\Phi_\rho(w^k)-\sigma\|d^k\|^2
\right\},
\]
so \(\lambda_k=1\) satisfies the line-search condition. Since
\(0<\eta<1\), the update reaches \(\lambda_k=1\) after finitely many
reductions whenever no larger step is accepted. Consequently,
\[
1\leq\lambda_k
\leq
\min\{\bar\lambda,\lambda_k^{\max}\},
\;
x^{k+1}=w^k+\lambda_kd^k\in\mathcal X.
\]
This proves \({\rm(iii)}\) and completes the proof.\end{proof}
\begin{proposition}\label{prop:exact_stop}
If \(d^k=0\) at some iteration, then \(w^k\) is a critical point of
Problem~\eqref{DCP}.
\end{proposition}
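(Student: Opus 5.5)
If \(d^k=0\), then \(y^k=w^k\). The plan is to read off criticality directly from the optimality condition of the strongly convex subproblem that defines \(y^k\), and then pass from the regularized decomposition back to the original one via~\eqref{eq:criticality_equivalence}.

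First, the subproblem \(\min_{x\in\mathcal X}\{\widehat G_\rho(x)-\langle u^k,x\rangle\}\) is convex, and \(\widehat G_\rho\) is finite and continuous. Hence its optimality condition is exactly the inclusion used in the proof of Lemma~\ref{lem:welldefined}(ii):
\[
u^k\in\partial\widehat G_\rho(y^k)+N_{\mathcal X}(y^k)=\partial\widehat Q_\rho(y^k).
\]
Substituting \(y^k=w^k\) gives \(u^k\in\partial\widehat G_\rho(w^k)+N_{\mathcal X}(w^k)\). By construction in Algorithm~\ref{MainAlgo}, \(u^k=s^k+\nu w^k\in\partial\widehat H_\rho(w^k)\), since \(s^k\in\partial H_\rho(w^k)\) by Lemma~\ref{lem:subCVaR}. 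Therefore
\[
u^k\in\partial\widehat H_\rho(w^k)\cap\bigl(\partial\widehat G_\rho(w^k)+N_{\mathcal X}(w^k)\bigr),
\]
so this intersection is nonempty.

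Second, we invoke~\eqref{eq:criticality_equivalence}. Concretely, we subtract \(\nu w^k\) from \(u^k\) and use \(\partial\widehat G_\rho=\partial G_\rho+\nu\,\mathrm{id}\). This yields
\[
s^k\in\partial H_\rho(w^k)\cap\bigl(\partial G_\rho(w^k)+N_{\mathcal X}(w^k)\bigr),
\]
which is precisely the definition of a critical point of~\eqref{DCP}. We also note that \(w^k\in\mathcal X\): it is either a projection onto \(\mathcal X\) or the iterate \(x^k\in\mathcal X\) (by Lemma~\ref{lem:welldefined}(iii), or the initialization when \(k=0\)). This membership is required in the definition of a critical point.

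There is no real obstacle; the argument is a direct unwinding of definitions. The only point needing care is justifying the sum rule \(\partial(\widehat G_\rho+\delta_{\mathcal X})=\partial\widehat G_\rho+N_{\mathcal X}\). This holds because \(\widehat G_\rho\) is finite-valued and convex on all of \(\mathbb R^{nT}\), as already noted before Lemma~\ref{lem:welldefined}.
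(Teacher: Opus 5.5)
Your proof is correct and follows the paper's argument essentially verbatim. You read off $u^k\in\partial\widehat G_\rho(w^k)+N_{\mathcal X}(w^k)$ from the subproblem's optimality condition with $y^k=w^k$, combine it with $u^k\in\partial\widehat H_\rho(w^k)$, and return to~\eqref{DCP} via~\eqref{eq:criticality_equivalence}; your explicit unwinding through $s^k=u^k-\nu w^k$ and the check that $w^k\in\mathcal X$ just make that last step concrete.
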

\begin{proof}
If \(d^k=0\), then \(y^k=w^k\). The optimality condition of the
convex DCA subproblem gives
\[
u^k
\in
\partial\widehat G_\rho(w^k)
+
N_{\mathcal X}(w^k).
\]
By the definition of \(u^k\) in Algorithm~\ref{MainAlgo},
\(u^k\in\partial\widehat H_\rho(w^k)\). Therefore, \(w^k\) is critical for the regularized DC decomposition.
By~\eqref{eq:criticality_equivalence}, it is also critical for the
problem in~\eqref{DCP}.
\end{proof}

\begin{proposition}\label{prop:descent}
Let \(\{x^k\}\) be an infinite sequence generated by
Algorithm~\ref{MainAlgo}. Then
\begin{equation}\label{eq:descent}
\Phi_\rho(x^{k+1})
\leq
\Phi_\rho(w^k)
-\sigma\lambda_k^2\|d^k\|^2
\leq
\Phi_\rho(x^k)
-\sigma\lambda_k^2\|d^k\|^2.
\end{equation}
In addition,
\begin{equation}\label{eq:bracketing}
\Phi_\rho(w^{k+1})
\leq
\Phi_\rho(x^{k+1})
\leq
\Phi_\rho(y^k)
\leq
\Phi_\rho(w^k),
\end{equation}
and
\begin{equation}\label{eq:w_descent}
\Phi_\rho(w^{k+1})
\leq
\Phi_\rho(w^k)
-\sigma\lambda_k^2\|d^k\|^2.
\end{equation}
Consequently, both \(\{\Phi_\rho(x^k)\}\) and
\(\{\Phi_\rho(w^k)\}\) are nonincreasing, and
\[
\sum_{k=0}^{\infty}
\lambda_k^2\|d^k\|^2<+\infty,
\;
\sum_{k=0}^{\infty}\|d^k\|^2<+\infty.
\]
\end{proposition}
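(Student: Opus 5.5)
The plan is to read off the inequalities directly from the acceptance rule of the line search, the safeguard step, and Lemma~\ref{lem:welldefined}(ii). Then we telescope using the lower boundedness of \(\Phi_\rho\) on the compact set \(\mathcal X\).

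\textbf{Step 1: the first inequality in \eqref{eq:descent}.} Since the sequence is infinite, \(d^k\neq0\) at every iteration, so the line search is reached. By Lemma~\ref{lem:welldefined}(iii) it terminates with some \(\lambda_k\in[1,\min\{\bar\lambda,\lambda_k^{\max}\}]\). Two cases arise.

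\emph{Case \(\lambda_k>1\).} The loop exited because
\[
\Phi_\rho(w^k+\lambda_kd^k)\leq\min\{\Phi_\rho(y^k),\Phi_\rho(w^k)-\sigma\lambda_k^2\|d^k\|^2\}.
\]
This gives both \(\Phi_\rho(x^{k+1})\leq\Phi_\rho(w^k)-\sigma\lambda_k^2\|d^k\|^2\) and \(\Phi_\rho(x^{k+1})\leq\Phi_\rho(y^k)\).

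\emph{Case \(\lambda_k=1\).} Here \(x^{k+1}=y^k\). Lemma~\ref{lem:welldefined}(ii), together with \(\sigma<\lambda_2+\nu\), yields
\[
\Phi_\rho(x^{k+1})\leq\Phi_\rho(w^k)-\sigma\|d^k\|^2=\Phi_\rho(w^k)-\sigma\lambda_k^2\|d^k\|^2,
\]
and trivially \(\Phi_\rho(x^{k+1})=\Phi_\rho(y^k)\).

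This case split is the only point needing care, since the while-condition is checked only when \(\lambda_k>1\).

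\textbf{Step 2: the second inequality in \eqref{eq:descent}.} The safeguard guarantees \(\Phi_\rho(w^k)\leq\Phi_\rho(x^k)\). Either the projected extrapolation already satisfies this, or \(w^k\) is reset to \(x^k\).

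\textbf{Step 3: the bracketing \eqref{eq:bracketing}.} The first inequality, \(\Phi_\rho(w^{k+1})\leq\Phi_\rho(x^{k+1})\), is the safeguard applied at iteration \(k+1\). The middle inequality, \(\Phi_\rho(x^{k+1})\leq\Phi_\rho(y^k)\), was obtained in Step 1. The last, \(\Phi_\rho(y^k)\leq\Phi_\rho(w^k)\), is Lemma~\ref{lem:welldefined}(ii). Combining the safeguard at \(k+1\) with the first inequality of \eqref{eq:descent} gives \eqref{eq:w_descent}. Monotonicity of both sequences \(\{\Phi_\rho(x^k)\}\) and \(\{\Phi_\rho(w^k)\}\) follows immediately.

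\textbf{Step 4: summability.} \(\Phi_\rho\) is continuous on the compact set \(\mathcal X\): \(f\) is continuous, and \(R\) is continuous by Lemma~\ref{lem:VaRDC}. Hence \(\Phi_\rho\geq\underline\Phi:=\min_{\mathcal X}\Phi_\rho>-\infty\). All iterates \(x^k\) lie in \(\mathcal X\) by Lemma~\ref{lem:welldefined}(iii) and \(x^0\in\mathcal X\). Summing \eqref{eq:descent} from \(k=0\) to \(K\) and telescoping gives
\[
\sigma\sum_{k=0}^{K}\lambda_k^2\|d^k\|^2\leq\Phi_\rho(x^0)-\underline\Phi .
\]
Letting \(K\to\infty\) yields the first series bound. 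Since \(\lambda_k\geq1\), we have \(\|d^k\|^2\leq\lambda_k^2\|d^k\|^2\), which gives the second.
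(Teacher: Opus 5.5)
Your proposal is correct and follows essentially the same route as the paper: you read \eqref{eq:descent} and \eqref{eq:bracketing} off the line-search acceptance rule, the safeguard, and Lemma~\ref{lem:welldefined}(ii), then telescope using the lower bound of $\Phi_\rho$ on the compact set $\mathcal X$. Your explicit split into the cases $\lambda_k>1$ and $\lambda_k=1$ adds a little precision. When $\lambda_k=1$ the while-condition is never tested, so you invoke Lemma~\ref{lem:welldefined}(ii) with $\sigma<\lambda_2+\nu$, which spells out what the paper's one-line appeal to ``the Armijo acceptance condition'' leaves implicit.
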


\begin{proof}

By the Armijo acceptance condition,
\(x^{k+1}-w^k=\lambda_kd^k\), and the inertial safeguard,
\[
\Phi_\rho(x^{k+1})
\leq
\Phi_\rho(w^k)-\sigma\lambda_k^2\|d^k\|^2
\leq
\Phi_\rho(x^k)-\sigma\lambda_k^2\|d^k\|^2.
\]
This proves~\eqref{eq:descent} and shows that
\(\{\Phi_\rho(x^k)\}\) is nonincreasing.
\medskip

By the line-search condition and
Lemma~\ref{lem:welldefined}\({\rm(ii)}\),
\[
\Phi_\rho(x^{k+1})
\leq
\Phi_\rho(y^k)
\leq
\Phi_\rho(w^k).
\]
At iteration \(k+1\), the inertial safeguard gives
\( 
\Phi_\rho(w^{k+1})
\leq
\Phi_\rho(x^{k+1}).
\) 
Combining these inequalities yields
\[
\Phi_\rho(w^{k+1})
\leq
\Phi_\rho(x^{k+1})
\leq
\Phi_\rho(y^k)
\leq
\Phi_\rho(w^k),
\]
which proves~\eqref{eq:bracketing}.

Furthermore, combining
\( 
\Phi_\rho(w^{k+1})
\leq
\Phi_\rho(x^{k+1})
\) 
with the first inequality in~\eqref{eq:descent} gives
\[
\Phi_\rho(w^{k+1})
\leq
\Phi_\rho(w^k)
-
\sigma\lambda_k^2\|d^k\|^2.
\]
This proves~\eqref{eq:w_descent}. In particular,
\(\{\Phi_\rho(w^k)\}\) is nonincreasing.

\medskip

\noindent 
Since \(\Phi_\rho\) is continuous and \(\mathcal X\) is compact,
\(\Phi_\rho\) is bounded below on \(\mathcal X\). Summing~\eqref{eq:descent} from \(k=0\) to \(N\) gives
\[
\sigma
\sum_{k=0}^{N}
\lambda_k^2\|d^k\|^2
\leq
\Phi_\rho(x^0)-\Phi_\rho(x^{N+1})
\leq
\Phi_\rho(x^0)
-
\inf_{x\in\mathcal X}\Phi_\rho(x).
\]
Letting \(N\to\infty\) and using \(\lambda_k\geq1\), we obtain
\[
\sum_{k=0}^{\infty}\|d^k\|^2
\leq
\sum_{k=0}^{\infty}\lambda_k^2\|d^k\|^2
<+\infty.
\]
This completes the proof.
\end{proof}
\begin{proposition}
\label{prop:diff}

Suppose that Algorithm~\ref{MainAlgo} generates an infinite sequence
\(\{x^k\}\), with associated sequences \(\{w^k\}\) and
\(\{y^k\}\). Then
\[
d^k\to0,
\;
\|x^{k+1}-x^k\|\to0,
\;
\|w^k-x^k\|\to0.
\]
Consequently, \(\|y^k-x^k\|\to0\).

\end{proposition}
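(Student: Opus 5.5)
The plan is to reduce everything to the summability already established in Proposition~\ref{prop:descent}, and then to close a linear recursion for the successive differences with Lemma~\ref{lem:sequence}.

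First, Proposition~\ref{prop:descent} gives \(\sum_{k}\|d^k\|^2<+\infty\), so \(\|d^k\|\to0\). Since \(1\le\lambda_k\le\bar\lambda\) by Lemma~\ref{lem:welldefined}(iii), we get
\[
\|x^{k+1}-w^k\|=\lambda_k\|d^k\|\le\bar\lambda\|d^k\|\to0 .
\]

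Second, I will bound \(\|w^k-x^k\|\) by the previous step length. There are two cases, according to whether the safeguard is triggered.
\begin{itemize}
\item If the safeguard resets \(w^k=x^k\), then \(\|w^k-x^k\|=0\).
\item Otherwise \(w^k=\Pi_{\mathcal X}\bigl(x^k+\theta(x^k-x^{k-1})\bigr)\). Since \(x^k\in\mathcal X\) (Lemma~\ref{lem:welldefined}(iii) together with \(x^0\in\mathcal X\)), we have \(x^k=\Pi_{\mathcal X}(x^k)\). Because the projection onto the closed convex set \(\mathcal X\) is nonexpansive, \(\|w^k-x^k\|\le\theta\|x^k-x^{k-1}\|\).
\end{itemize}
In both cases \(\|w^k-x^k\|\le\theta\|x^k-x^{k-1}\|\).

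Third, the triangle inequality then gives
\[
\|x^{k+1}-x^k\|\le\|x^{k+1}-w^k\|+\|w^k-x^k\|\le\theta\|x^k-x^{k-1}\|+\bar\lambda\|d^k\|.
\]
Set \(a_k:=\|x^k-x^{k-1}\|\) (note \(a_0=0\) since \(x^{-1}=x^0\)) and \(b_k:=\bar\lambda\|d^k\|\to0\). With \(0\le\theta<1\), Lemma~\ref{lem:sequence} yields \(a_k\to0\), that is, \(\|x^{k+1}-x^k\|\to0\). Consequently \(\|w^k-x^k\|\le\theta a_k\to0\).

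Finally, since \(y^k=w^k+d^k\), we have \(\|y^k-x^k\|\le\|d^k\|+\|w^k-x^k\|\to0\).

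The only delicate point is the second step. The safeguard makes \(w^k\) depend on a case split, so the bound \(\|w^k-x^k\|\le\theta\|x^k-x^{k-1}\|\) must be checked in both branches. It also requires \(x^k\in\mathcal X\) so that \(x^k\) is a fixed point of \(\Pi_{\mathcal X}\). Once this bound is in place, the recursion closes directly via Lemma~\ref{lem:sequence}.
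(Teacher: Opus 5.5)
Your proposal is correct and follows essentially the same argument as the paper: summability of \(\|d^k\|^2\) from Proposition~\ref{prop:descent}, the two-case bound \(\|w^k-x^k\|\le\theta\|x^k-x^{k-1}\|\) via nonexpansiveness of \(\Pi_{\mathcal X}\), and the recursion closed by Lemma~\ref{lem:sequence}. The only difference is cosmetic: you take \(b_k=\bar\lambda\|d^k\|\), whereas the paper uses \(b_k=\lambda_k\|d^k\|\), and this changes nothing.
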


\begin{proof}

By Proposition~\ref{prop:descent},
\( 
\sum_{k=0}^{\infty}\|d^k\|^2<+\infty.
\)
Therefore, \(d^k=y^k-w^k\to0\). Since
\(\lambda_k\leq\bar\lambda\), it also follows that
\(\lambda_k\|d^k\|\to0\). When the inertial safeguard is inactive,
\[
w^k
=
\Pi_{\mathcal X}
\bigl(
x^k+\theta(x^k-x^{k-1})
\bigr).
\]
Since \(x^k\in\mathcal X\), we have
\(\Pi_{\mathcal X}(x^k)=x^k\). By the nonexpansiveness of the
Euclidean projection,
\[
\begin{aligned}
\|w^k-x^k\|
&=
\left\|
\Pi_{\mathcal X}
\bigl(
x^k+\theta(x^k-x^{k-1})
\bigr)
-
\Pi_{\mathcal X}(x^k)
\right\|
\leq
\theta\|x^k-x^{k-1}\|.
\end{aligned}
\]
When the safeguard is active, \(w^k=x^k\), so the same inequality
holds. Hence, in both cases,
\begin{equation}
\label{eq:inertial_difference}
\|w^k-x^k\|
\leq
\theta\|x^k-x^{k-1}\|.
\end{equation}

Using \(x^{k+1}=w^k+\lambda_kd^k\), we obtain
\[
\|x^{k+1}-x^k\|
\leq
\theta\|x^k-x^{k-1}\|
+
\lambda_k\|d^k\|.
\]
Applying Lemma~\ref{lem:sequence} with
\(a_k:=\|x^k-x^{k-1}\|\) and
\(b_k:=\lambda_k\|d^k\|\), where \(b_k\to0\), gives
\(\|x^{k+1}-x^k\|\to0\). It then follows from
\eqref{eq:inertial_difference} that
\(\|w^k-x^k\|\to0\).

Finally, since \(y^k-w^k=d^k\),
\[
\|y^k-x^k\|
\leq
\|d^k\|+\|w^k-x^k\|
\to0.
\]

\end{proof}
We next establish the boundedness of the subgradients used by the
algorithm. We first recall that the linear-programming dual
of~\eqref{eq:cvar_finite} is
\begin{equation}
\label{eq:cvar_dual}
\psi_{\beta,t}(z)
=
\max_{q\in Q_{\beta,t}}
\left\{
-\sum_{j=1}^{S_t}q_j(\xi_j^t)^\top z
\right\},
\end{equation}
where
\[
Q_{\beta,t}
:=
\left\{
q\in\mathbb R^{S_t}:
\sum_{j=1}^{S_t}q_j=1,\;
0\leq q_j\leq\frac{p_j^t}{1-\beta},
\; j=1,\ldots,S_t
\right\};
\]
see
\cite{RockafellarUryasev2000,RockafellarUryasev2002}.

\begin{lemma}
\label{lem:bounded_subgradient}

Every sequence \(\{s^k\}\) satisfying
\(
s^k\in\partial H_\rho(w^k)
\)
is bounded. Hence, the associated sequence
\[
u^k
=
s^k+\nu w^k
\in
\partial\widehat H_\rho(w^k)
\]
is also bounded.

\end{lemma}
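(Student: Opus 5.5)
The plan is to bound the entire subdifferential \(\partial\psi_{\alpha,t}(z)\) by a constant that does not depend on \(z\). By Lemma~\ref{lem:subCVaR}, \(\partial H_\rho(x)\) is the product \(\rho B_\gamma\bigl(\partial\psi_{\alpha,1}(x^1)\times\cdots\times\partial\psi_{\alpha,T}(x^T)\bigr)\), so such a bound controls \(s^k\) directly.

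For the uniform bound I use the dual representation~\eqref{eq:cvar_dual}. It exhibits \(\psi_{\alpha,t}\) as the support function of the compact polytope
\[
K_t:=\Bigl\{-\sum_{j=1}^{S_t}q_j\xi_j^t:\ q\in Q_{\alpha,t}\Bigr\}\subset\mathbb R^n .
\]
For any \(g\in\partial\psi_{\alpha,t}(z)\), the subgradient inequality combined with positive homogeneity of \(\psi_{\alpha,t}\) gives two facts. Taking \(y=2z\) and \(y=0\) yields \(\langle g,z\rangle=\psi_{\alpha,t}(z)\). Then \(\langle g,y\rangle\le\psi_{\alpha,t}(y)\) for all \(y\). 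Hence \(g\in\partial\psi_{\alpha,t}(0)=K_t\), which is the standard identity for the subdifferential of a support function.

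There is also a more elementary route. Since \(\psi_{\alpha,t}\) is a maximum of the linear functions \(z\mapsto-\sum_jq_j(\xi_j^t)^\top z\) over \(q\in Q_{\alpha,t}\), we get \(|\psi_{\alpha,t}(y)|\le M_t\|y\|\) with
\[
M_t:=\max_{j}\|\xi_j^t\|.
\]
Here we use \(q\ge0\) and \(\sum_jq_j=1\). Applying the subgradient inequality at \(y=z+e\) with \(\|e\|=1\), together with \(\langle g,z\rangle=\psi_{\alpha,t}(z)\), gives \(\langle g,e\rangle\le\psi_{\alpha,t}(z+e)-\psi_{\alpha,t}(z)\). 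Subadditivity of the support function bounds the right-hand side by \(\psi_{\alpha,t}(e)\le M_t\). Taking \(e=g/\|g\|\) then yields \(\|g\|\le M_t\).

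Either way, \(\|g\|\le M_t\) for every \(z\in\mathbb R^n\) and every \(g\in\partial\psi_{\alpha,t}(z)\). Consequently,
\[
\|s^k\|^2\le\rho^2B_\gamma^2\sum_{t=1}^T M_t^2
\]
for every \(k\), independently of how \(w^k\) evolves.

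For \(u^k\), it suffices to note that \(w^k\in\mathcal X\) by construction. It is either a projection onto \(\mathcal X\) or equal to \(x^k\in\mathcal X\) by Lemma~\ref{lem:welldefined}(iii). The set \(\mathcal X=X^T\) is compact; in fact \(\|w^k\|\le\sqrt T\), since each simplex block has norm at most \(1\). Thus \(\|u^k\|\le\|s^k\|+\nu\sqrt T\) is bounded.

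The only point requiring care is the first step: justifying that every element of \(\partial\psi_{\alpha,t}(z)\) lies in the fixed set \(K_t\), or equivalently is bounded by the global Lipschitz constant \(M_t\). The subgradients computed by the explicit formula~\eqref{eq:cvar_subgradient} are only one particular choice, so the argument cannot rely on that formula alone. For those, the bound is immediate anyway: the coefficients \(q_j=p_j^t\vartheta_j/(1-\alpha)\) lie in \(Q_{\alpha,t}\) by~\eqref{eq:theta_conditions}, so \(\|g_{\alpha,t}\|\le M_t\) directly.
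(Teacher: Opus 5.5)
Your proposal is correct and has the same overall structure as the paper's proof. First, a \(z\)-independent bound \(\|g\|\le M_t:=\max_j\|\xi_j^t\|\) for every \(g\in\partial\psi_{\alpha,t}(z)\). Then the product formula for \(\partial H_\rho\) from Lemma~\ref{lem:subCVaR}. Finally, \(\|w^k\|\le\sqrt T\).

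The real difference is how the uniform bound on the CVaR subgradients is justified. The paper writes \(\psi_{\alpha,t}\) as a maximum of finitely many linear functions (over the vertices of \(Q_{\alpha,t}\)). It then cites the finite-max subdifferential formula (Exercise~8.31 of Rockafellar--Wets) to write every subgradient as \(-\sum_j q_j\xi_j^t\) with \(q\in\mathcal M_{\alpha,t}(z)\subseteq Q_{\alpha,t}\). The conditions \(q\ge0\) and \(\sum_jq_j=1\) then give the bound.

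You derive the needed fact from first principles, using only positive homogeneity and subadditivity of the support function. Your identities \(\langle g,z\rangle=\psi_{\alpha,t}(z)\) and \(g\in\partial\psi_{\alpha,t}(0)=K_t\) together say that \(g\in\operatorname*{arg\,max}_{v\in K_t}\langle v,z\rangle\). This is exactly the paper's description, since any \(q\) generating such a \(g\) lies in \(\mathcal M_{\alpha,t}(z)\). Your Lipschitz variant gives only the norm bound, but with even less machinery.

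The trade-off is this. Your argument is self-contained and would apply unchanged to any compact dual set. The paper's argument is shorter because it relies on the citation. Both give the same constant \(\rho B_\gamma\bigl(\sum_t M_t^2\bigr)^{1/2}\) for \(\{s^k\}\).

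One cosmetic point: in your elementary route the identity \(\langle g,z\rangle=\psi_{\alpha,t}(z)\) is not needed. The subgradient inequality at \(y=z+e\) already gives \(\langle g,e\rangle\le\psi_{\alpha,t}(z+e)-\psi_{\alpha,t}(z)\).
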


\begin{proof}

Since \(Q_{\alpha,t}\) is a polytope,~\eqref{eq:cvar_dual} is the
maximum of finitely many affine functions. Hence, by
\cite[Exercise~8.31]{RockafellarWets1998}, for every
\(g^{k,t}\in\partial\psi_{\alpha,t}(w^{k,t})\), there exists
\[
q^{k,t}
\in
\operatorname*{arg\,max}_{q\in Q_{\alpha,t}}
\left\{
-\sum_{j=1}^{S_t}q_j(\xi_j^t)^\top w^{k,t}
\right\}
\]
such that
\(
g^{k,t}
=
-\sum_{j=1}^{S_t}q_j^{k,t}\xi_j^t.
\)
In particular, \(q^{k,t}\in Q_{\alpha,t}\), so
\( 
q_j^{k,t}\geq0,
\;
\sum_{j=1}^{S_t}q_j^{k,t}=1.
\) 
Therefore,
\[
\begin{aligned}
\|g^{k,t}\|
&\leq
\sum_{j=1}^{S_t}q_j^{k,t}\|\xi_j^t\|
\leq
\max_{1\leq j\leq S_t}\|\xi_j^t\|.
\end{aligned}
\]

By Lemma~\ref{lem:subCVaR}, every
\(s^k\in\partial H_\rho(w^k)\) has the form
\[
s^k
=
\rho B_\gamma
\bigl(
g^{k,1},\ldots,g^{k,T}
\bigr),
\;
g^{k,t}\in
\partial\psi_{\alpha,t}(w^{k,t}).
\]
Hence,
\[
\begin{aligned}
\|s^k\|
&=
\rho B_\gamma
\left(
\sum_{t=1}^{T}\|g^{k,t}\|^2
\right)^{1/2}
\leq
\rho B_\gamma
\left[
\sum_{t=1}^{T}
\left(
\max_{1\leq j\leq S_t}\|\xi_j^t\|
\right)^2
\right]^{1/2}.
\end{aligned}
\]
Thus, \(\{s^k\}\) is bounded. Moreover, since \(w^{k,t}\in X\), we have
\(\|w^{k,t}\|\leq1\). Hence,
\[
\|w^k\|^2
=
\sum_{t=1}^{T}\|w^{k,t}\|^2
\leq
T,
\]
and therefore \(\|w^k\|\leq\sqrt T\). Consequently,
\[
\|u^k\|
\leq
\|s^k\|+\nu\|w^k\|
\leq
\|s^k\|+\nu\sqrt T.
\]
Thus, \(\{u^k\}\) is bounded.
\end{proof}

\begin{theorem}
\label{thm:stationary}

Let \(\{x^k\}\) be an infinite sequence generated by
Algorithm~\ref{MainAlgo}. Then every accumulation point of
\(\{x^k\}\) is a critical point of Problem~\eqref{DCP}.

\end{theorem}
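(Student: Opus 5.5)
Let \(\bar x\) be an accumulation point of \(\{x^k\}\), with a subsequence \(x^{k_j}\to\bar x\). Since \(\mathcal X\) is closed, \(\bar x\in\mathcal X\). By Proposition~\ref{prop:diff}, \(\|w^k-x^k\|\to0\) and \(\|y^k-x^k\|\to0\). Hence \(w^{k_j}\to\bar x\) and \(y^{k_j}\to\bar x\).

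The plan is to pass to the limit in the optimality condition of the DCA subproblem, written at the points \(y^{k_j}\). By Lemma~\ref{lem:welldefined}, for each \(k\) there is \(v^k\in N_{\mathcal X}(y^k)\) such that
\[
u^k-v^k\in\partial\widehat G_\rho(y^k),
\]
that is, \(u^k\in\partial\widehat Q_\rho(y^k)\). By Lemma~\ref{lem:bounded_subgradient}, \(\{u^k\}\) is bounded, so after refining the subsequence we may assume \(u^{k_j}\to\bar u\).

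The first limit step concerns \(\widehat H_\rho\). We have \(u^{k_j}\in\partial\widehat H_\rho(w^{k_j})\), the function \(\widehat H_\rho\) is finite and convex on \(\mathbb R^{nT}\), and \(w^{k_j}\to\bar x\). Closedness of the graph of the convex subdifferential (outer semicontinuity) then gives \(\bar u\in\partial\widehat H_\rho(\bar x)\).

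The second limit step concerns \(\widehat Q_\rho\), which is a proper, lower semicontinuous, convex function. Its subdifferential graph is also closed, but only under the additional requirement that function values converge along the sequence. Here that holds automatically: \(\widehat Q_\rho(y^{k_j})=\widehat G_\rho(y^{k_j})\to\widehat G_\rho(\bar x)=\widehat Q_\rho(\bar x)\), because \(\widehat G_\rho\) is continuous and all points lie in \(\mathcal X\).

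To avoid invoking a closedness result for \(\widehat Q_\rho\), I would argue directly from the subgradient inequality. For every \(z\in\mathcal X\),
\[
\widehat G_\rho(z)\geq\widehat G_\rho(y^{k_j})+\langle u^{k_j},z-y^{k_j}\rangle .
\]
Letting \(j\to\infty\) and using the continuity of \(\widehat G_\rho\) gives \(\widehat G_\rho(z)\geq\widehat G_\rho(\bar x)+\langle\bar u,z-\bar x\rangle\) for all \(z\in\mathcal X\). This says \(\bar u\in\partial\widehat Q_\rho(\bar x)=\partial\widehat G_\rho(\bar x)+N_{\mathcal X}(\bar x)\). The identical limiting argument applied to the \(\widehat H_\rho\) subgradient inequality on all of \(\mathbb R^{nT}\) confirms the first limit step.

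Combining the two limits, \(\bar u\in\partial\widehat H_\rho(\bar x)\cap\bigl(\partial\widehat G_\rho(\bar x)+N_{\mathcal X}(\bar x)\bigr)\). By the criticality equivalence~\eqref{eq:criticality_equivalence}, \(\bar x\) is a critical point of Problem~\eqref{DCP}.

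The only delicate point is that the two inclusions are anchored at different points: \(w^k\) for \(\widehat H_\rho\) and \(y^k\) for \(\widehat Q_\rho\). This is resolved precisely by Proposition~\ref{prop:diff}, which forces both sequences to the same limit \(\bar x\), together with the boundedness from Lemma~\ref{lem:bounded_subgradient}, which is what allows extraction of the convergent subsequence \(u^{k_j}\to\bar u\).
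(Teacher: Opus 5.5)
Your proof is correct and follows essentially the paper's argument: bound and extract $u^{k_j}\to\bar u$, use Proposition~\ref{prop:diff} to send both $w^{k_j}$ and $y^{k_j}$ to $\bar x$, and pass to the limit in the two subgradient inclusions. You verify these limits directly from the subgradient inequalities, whereas the paper cites closedness of convex subdifferential graphs; one small correction is that for a proper lsc convex function that graph is closed with no function-value-convergence requirement (that caveat belongs to limiting subdifferentials of nonconvex functions), but this does not affect your argument since you bypass it.
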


\begin{proof}

Since \(\mathcal X\) is compact, the sequence \(\{x^k\}\subset
\mathcal X\) has at least one accumulation point. Let \(\bar x\) be
an arbitrary accumulation point, and choose a subsequence satisfying
\( 
x^{k_\ell}\to\bar x.
\) 
By Proposition~\ref{prop:diff},
\( 
w^{k_\ell}\to\bar x,
\;
y^{k_\ell}\to\bar x.
\) 

The optimality condition of the convex DCA subproblem gives 
\( 
u^k\in\partial\widehat Q_\rho(y^k).
\) 
By Lemma~\ref{lem:bounded_subgradient}, the sequence \(\{u^k\}\) is
bounded. Therefore, after passing to a further subsequence if
necessary, there exists \(\bar u\) such that
\( 
u^{k_\ell}\to\bar u.
\) 

Since \(\widehat Q_\rho\) is proper, lower semicontinuous, and convex,
its subdifferential has a closed graph. Hence,
\[
\bar u
\in
\partial\widehat Q_\rho(\bar x)
=
\partial\widehat G_\rho(\bar x)
+
N_{\mathcal X}(\bar x).
\]
Moreover,
\( 
u^{k_\ell}
\in
\partial\widehat H_\rho(w^{k_\ell}).
\) 
Since \(\widehat H_\rho\) is also proper, lower semicontinuous, and
convex, the closedness of its subdifferential graph gives
\( 
\bar u
\in
\partial\widehat H_\rho(\bar x).
\) 
Therefore,
\[
\bar u
\in
\partial\widehat H_\rho(\bar x)
\cap
\bigl(
\partial\widehat G_\rho(\bar x)
+
N_{\mathcal X}(\bar x)
\bigr).
\]
Thus, \(\bar x\) is critical for the regularized DC decomposition.
By~\eqref{eq:criticality_equivalence}, \(\bar x\) is also critical
for Problem~\eqref{DCP}.

\end{proof}
To prepare the whole-sequence convergence analysis, we incorporate
the constraint \(x\in\mathcal X\) into the objective by defining
\begin{equation}
\label{eq:Psi_def}
\Psi_\rho(x)
:=
\Phi_\rho(x)+\delta_{\mathcal X}(x)
=
\widehat Q_\rho(x)-\widehat H_\rho(x).
\end{equation}
Since \(\Psi_\rho\) may be nonconvex,
\(\partial\Psi_\rho\) denotes its limiting subdifferential
\cite{RockafellarWets1998}.

\medskip
We next recall the Kurdyka--Łojasiewicz (KL) property, which will be
used to establish whole-sequence convergence and a local convergence
rate. A proper closed function \(f\) is said to have the KL property
at \(\bar x\in\operatorname{dom}\partial f\) with exponent
\(\vartheta\in[0,1)\) if there exist \(c>0\), \(\varepsilon>0\), and
\(\zeta>0\) such that
\[
\operatorname{dist}(0,\partial f(x))
\geq
c\bigl(f(x)-f(\bar x)\bigr)^{\vartheta}
\]
whenever
\( 
\|x-\bar x\|<\varepsilon,
\;
f(\bar x)<f(x)<f(\bar x)+\zeta.
\) 
In particular, the exponent \(\vartheta=1/2\) leads to a local linear
convergence rate under the descent and relative-error estimates
established below.

\begin{proposition}
\label{prop:KL}

Under the finite-scenario approximation, \(\Psi_\rho\) is a proper
closed piecewise linear--quadratic function. Moreover, it is
semi-algebraic and has the KL property with exponent \(1/2\) at every
point of \(\operatorname{dom}\partial\Psi_\rho\).

\end{proposition}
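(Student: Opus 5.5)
We first establish the piecewise linear--quadratic (PLQ) structure of \(\Psi_\rho\) and then invoke known KL-exponent results for PLQ functions. By \eqref{eq:Psi_def}, \(\Psi_\rho=f+\rho R+\delta_{\mathcal X}\), so we treat the three summands separately.

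\emph{The objective \(f\).} Each term \(|x_i^t-x_i^{t-1}|\) is the maximum of two affine functions, and \(\|x^t\|^2\) is quadratic. Hence \(f\) is a convex PLQ function on \(\mathbb R^{nT}\): it is quadratic on each cell of the finite polyhedral partition generated by the sign patterns of \(x_i^t-x_i^{t-1}\).

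\emph{The penalty \(R\).} By Lemma~\ref{lem:VaRDC} and \eqref{eq:loss_var_dc}, each map \(x^t\mapsto \operatorname{VaR}_\alpha(-(\xi^t)^\top x^t)\) equals \(A_\gamma\psi_{\alpha-\gamma,t}-B_\gamma\psi_{\alpha,t}\). By \eqref{eq:cvar_dual}, each \(\psi_{\beta,t}\) is the maximum over the finitely many vertices of the polytope \(Q_{\beta,t}\) of linear functions, so it is piecewise affine. It follows that \(R\), a sum of positive parts of differences of such functions, is a continuous piecewise-affine function on \(\mathbb R^{nT}\).

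\emph{The constraint term.} \(\delta_{\mathcal X}\) is the indicator of the polyhedron \(\mathcal X\).

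To combine these, we take the common refinement of the finitely many polyhedral partitions, intersected with \(\mathcal X\). This refinement represents \(\operatorname{dom}\Psi_\rho=\mathcal X\) as a finite union of polyhedra, on each of which \(\Psi_\rho\) is a quadratic polynomial. This is exactly the definition of a PLQ function in the sense of \cite[Definition~10.20]{RockafellarWets1998}. Properness is immediate since \(\mathcal X\neq\varnothing\) and \(\Psi_\rho\) is finite there. Closedness holds because \(\Phi_\rho\) is continuous and \(\mathcal X\) is closed. Semi-algebraicity follows because the graph of \(\Psi_\rho\) is a finite union of sets each defined by finitely many polynomial equalities and inequalities (a polyhedral cell together with \(y=\) a quadratic).

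For the KL exponent, we would invoke the result of Li and Pong \cite{LiPong2018}: a proper closed PLQ function (equivalently, a piecewise minimum of finitely many proper closed convex PLQ functions with polyhedral domains) has the KL property with exponent \(1/2\) at every point of \(\operatorname{dom}\partial\Psi_\rho\). To match that hypothesis, we write \(\Psi_\rho=\min_{\ell}(q_\ell+\delta_{C_\ell})\) over the refined cells \(C_\ell\). Each \(q_\ell+\delta_{C_\ell}\) is a quadratic on a polyhedron, and it can be made convex by choosing the restriction appropriately: the only quadratic part is \(\lambda_2\|x\|^2\), which is convex, so every piece is convex. The continuity of \(\Psi_\rho\) on \(\mathcal X\) ensures that the minimum representation agrees with \(\Psi_\rho\) on the overlapping cell boundaries.

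The main obstacle is this last step. Li--Pong's exponent-\(1/2\) result for minima of convex PLQ pieces applies only when each piece satisfies the Luo--Tseng error bound and the pieces are continuous on their domains. Both conditions hold for convex quadratics on polyhedra, but they must be checked carefully, together with the precise form of the cited theorem. If this step becomes delicate, a fallback is the general result that PLQ functions have KL exponent \(1/2\) via calmness of the subdifferential; again the key ingredient is the Luo--Tseng error bound for convex quadratic programs over polyhedra.
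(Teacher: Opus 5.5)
Your proposal is correct and follows essentially the same route as the paper: write \(\Psi_\rho(x)=\min_r\{\lambda_2\|x\|^2+a_r^\top x+b_r+\delta_{P_r}(x)\}\) over a finite polyhedral partition of \(\mathcal X\), read off properness, closedness and semi-algebraicity from this representation, and obtain the exponent \(1/2\) from \cite[Corollary~5.2]{LiPong2018}. The step you flag as the main obstacle is not actually delicate: the paper applies that corollary directly to this representation, and the only additional hypothesis, continuity of \(\Psi_\rho\) on \(\operatorname{dom}\partial\Psi_\rho\), is one you already have, since \(\Psi_\rho=\Phi_\rho\) is continuous on \(\mathcal X=\operatorname{dom}\Psi_\rho\).
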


\begin{proof}

By Lemma~\ref{lem:VaRDC}, \(R\) is continuous and piecewise affine.
Since the expected-return term is affine and the transaction-cost
terms are piecewise affine, \(\Phi_\rho\) is the sum of a continuous
piecewise-affine function and the quadratic term
\( 
\lambda_2\sum_{t=1}^{T}\|x^t\|^2
=
\lambda_2\|x\|^2.
\) 
Thus, there exists a finite polyhedral partition
\(\{P_r\}_{r=1}^{N}\) of \(\mathcal X\), together with vectors
\(a_r\in\mathbb R^{nT}\) and constants \(b_r\in\mathbb R\), such
that
\[
\Phi_\rho(x)
=
\lambda_2\|x\|^2+a_r^\top x+b_r,
\;
x\in P_r.
\]
Since \(\Psi_\rho=\Phi_\rho\) on \(\mathcal X\) and
\(\Psi_\rho=+\infty\) outside \(\mathcal X\) by~\eqref{eq:Psi_def},
it follows that
\begin{equation}
\label{eq:Psi_PLQ_representation}
\Psi_\rho(x)
=
\min_{1\leq r\leq N}
\left\{
\lambda_2\|x\|^2
+a_r^\top x+b_r
+\delta_{P_r}(x)
\right\}.
\end{equation}

Since each \(P_r\) is a nonempty closed polyhedron, the function
\[
x
\longmapsto
a_r^\top x+b_r+\delta_{P_r}(x)
\]
is proper, closed, and polyhedral. Therefore, \(\Psi_\rho\) is a
proper closed piecewise linear--quadratic function.

Moreover, since \(\Phi_\rho\) is continuous and
\(\Psi_\rho=\Phi_\rho\) on
\(\operatorname{dom}\Psi_\rho=\mathcal X\), the function
\(\Psi_\rho\) is continuous relative to its effective domain and,
hence, on \(\operatorname{dom}\partial\Psi_\rho\). It follows from
\cite[Corollary~5.2]{LiPong2018} that \(\Psi_\rho\) has the KL
property with exponent \(1/2\).

Finally, since \(\delta_{P_r}(x)=0\) for \(x\in P_r\) and
\(+\infty\) otherwise, representation~\eqref{eq:Psi_PLQ_representation}
implies that
\[
\Psi_\rho(x)
=
\lambda_2\|x\|^2+a_r^\top x+b_r,
\;
x\in P_r.
\]
Consequently,
\[
\operatorname{gph}\Psi_\rho
=
\bigcup_{r=1}^{N}
\left\{
(x,\eta):
x\in P_r,\;
\eta=
\lambda_2\|x\|^2+a_r^\top x+b_r
\right\}.
\]
Each set in this finite union is semi-algebraic because \(P_r\) is
polyhedral and the equality defining \(\eta\) is polynomial. Hence,
\(\Psi_\rho\) is semi-algebraic.
\end{proof}

To establish whole-sequence convergence, we next derive a
relative-error estimate under a local no-ties condition. Using the
dual representation~\eqref{eq:cvar_dual}, for
\(t=1,\ldots,T\), define
\[
\mathcal M_{\alpha,t}(z)
:=
\operatorname*{arg\,max}_{q\in Q_{\alpha,t}}
\left\{
-\sum_{j=1}^{S_t}
q_j(\xi_j^t)^\top z
\right\}.
\]

\begin{assumption}[Local no-ties condition]
\label{ass:no_ties}

Let \(\bar x\) be an accumulation point of the sequence generated by
Algorithm~\ref{MainAlgo}. For every \(t=1,\ldots,T\),
\(\mathcal M_{\alpha,t}(\bar x^t)\) is a singleton.

\end{assumption}

\begin{remark}
Assumption~\ref{ass:no_ties} is local and is required only for the
whole-sequence and local linear convergence results. In the
equal-probability case, it holds whenever the scenario losses are
strictly separated across the empirical \(\alpha\)-tail boundary.
For \(S=500\) and \(\alpha=0.95\), this amounts to a strict separation
between the 25th and 26th largest scenario losses. The condition is
generic for continuously distributed scenarios, although ties may
occur under empirical bootstrap sampling.
\end{remark}

\begin{lemma}
\label{lem:relative_error}

Let \(\bar x\) satisfy Assumption~\ref{ass:no_ties}. Then there exists
a neighborhood \(U\) of \(\bar x\) such that, whenever
\(w^k,y^k\in U\),
\[
r^k
:=
u^k-\nabla\widehat H_\rho(y^k)
\in
\partial\Psi_\rho(y^k)
\]
and
\begin{equation}
\label{eq:relative_error}
\operatorname{dist}
\bigl(0,\partial\Psi_\rho(y^k)\bigr)
\leq
\|r^k\|
=
\nu\|d^k\|.
\end{equation}

\end{lemma}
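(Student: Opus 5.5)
The plan is to show that, under Assumption~\ref{ass:no_ties}, the concave part $\widehat H_\rho$ is actually affine-plus-quadratic (hence $C^1$) on a neighborhood of $\bar x$. The claimed inclusion then follows from an exact subdifferential sum rule for $\Psi_\rho=\widehat Q_\rho-\widehat H_\rho$.

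\textbf{Step 1: local smoothness of $H_\rho$.} Since $Q_{\alpha,t}$ is a polytope, the dual representation~\eqref{eq:cvar_dual} writes $\psi_{\alpha,t}$ as the maximum of the finitely many linear functions $z\mapsto -\sum_j q_j(\xi_j^t)^\top z$, with $q$ ranging over the vertices of $Q_{\alpha,t}$. By Assumption~\ref{ass:no_ties}, $\mathcal M_{\alpha,t}(\bar x^t)=\{\bar q^t\}$, and this maximizer must be a vertex. Consequently the maximum over vertices is attained strictly at $\bar q^t$, with a positive gap over all other vertices. By continuity in $z$, this strict gap persists on a neighborhood $U_t$ of $\bar x^t$. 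On $U_t$ we therefore have
\[
\psi_{\alpha,t}(z)=-\sum_j\bar q_j^t(\xi_j^t)^\top z,
\]
which is linear, so $\psi_{\alpha,t}$ is differentiable there with constant gradient $\bar g^t:=-\sum_j\bar q^t_j\xi_j^t$. Let $U$ be the set of $x$ with $x^t\in U_t$ for all $t$. On $U$, $H_\rho$ is linear with gradient $\bar s:=\rho B_\gamma(\bar g^1,\dots,\bar g^T)$, and $\nabla\widehat H_\rho(x)=\bar s+\nu x$. In particular, $\partial H_\rho(w^k)=\{\bar s\}$ when $w^k\in U$ (by Lemma~\ref{lem:subCVaR}), so $s^k=\bar s$ and $u^k=\bar s+\nu w^k$.

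\textbf{Step 2: subdifferential of $\Psi_\rho$ at $y^k$.} Because $\widehat H_\rho$ is $C^1$ near $y^k\in U$, the limiting-subdifferential sum rule for a smooth perturbation \cite[Exercise~8.8]{RockafellarWets1998} gives
\[
\partial\Psi_\rho(y^k)=\partial\widehat Q_\rho(y^k)-\nabla\widehat H_\rho(y^k),
\]
where $\partial\widehat Q_\rho$ coincides with the convex subdifferential since $\widehat Q_\rho$ is convex. The optimality condition of the DCA subproblem, as used in Theorem~\ref{thm:stationary}, gives $u^k\in\partial\widehat Q_\rho(y^k)$. Hence $r^k=u^k-\nabla\widehat H_\rho(y^k)\in\partial\Psi_\rho(y^k)$.

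\textbf{Step 3: the error estimate.} Using Step 1 at both points,
\[
r^k=(\bar s+\nu w^k)-(\bar s+\nu y^k)=-\nu d^k.
\]
This gives $\|r^k\|=\nu\|d^k\|$, and the distance bound~\eqref{eq:relative_error} follows at once.

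The main obstacle is Step 1: we must justify rigorously that uniqueness of the dual maximizer forces local linearity of $\psi_{\alpha,t}$. The route taken is to use vertex enumeration of the polytope $Q_{\alpha,t}$ together with the strict inequality gap and continuity. One must also ensure that the subgradient produced by~\eqref{eq:theta_conditions} equals $\bar g^t$. This follows because $\partial\psi_{\alpha,t}$ is a singleton on the open set $U_t$.
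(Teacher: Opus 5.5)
Your proposal is correct and follows essentially the same route as the paper. You enumerate the vertices of $Q_{\alpha,t}$ and use the strict gap at the unique maximizer to get local linearity of $\psi_{\alpha,t}$ on a product neighborhood $U$, combine $\partial\Psi_\rho(y^k)=\partial\widehat Q_\rho(y^k)-\nabla\widehat H_\rho(y^k)$ with $u^k\in\partial\widehat Q_\rho(y^k)$, and use $u^k=\nabla\widehat H_\rho(w^k)$ to obtain $\|r^k\|=\nu\|d^k\|$. You are slightly more explicit than the paper in justifying that the unique maximizer is a vertex and in naming the smooth-perturbation sum rule.
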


\begin{proof}

For each \(t=1,\ldots,T\), let
\( 
V_{\alpha,t}
:=
\operatorname{ext}(Q_{\alpha,t})
\) 
be the finite set of extreme points of \(Q_{\alpha,t}\). By
\eqref{eq:cvar_dual},
\[
\psi_{\alpha,t}(z)
=
\max_{q\in V_{\alpha,t}}
\left\{
-\sum_{j=1}^{S_t}q_j(\xi_j^t)^\top z
\right\}.
\]

Let \(q_t^*\) be the unique element of
\(\mathcal M_{\alpha,t}(\bar x^t)\), whose existence follows from
Assumption~\ref{ass:no_ties}. Then, for every
\(q\in V_{\alpha,t}\setminus\{q_t^*\}\),
\[
-\sum_{j=1}^{S_t}q_{t,j}^*(\xi_j^t)^\top\bar x^t
>
-\sum_{j=1}^{S_t}q_j(\xi_j^t)^\top\bar x^t.
\]
Since \(V_{\alpha,t}\) is finite, these strict inequalities remain
valid on a neighborhood \(U_t\) of \(\bar x^t\). Hence,
\[
\psi_{\alpha,t}(z)
=
-\sum_{j=1}^{S_t}q_{t,j}^*(\xi_j^t)^\top z,
\;
z\in U_t.
\]
Set \(U:=U_1\times\cdots\times U_T\). By~\eqref{eq:H_def},
\(H_\rho\) is affine on \(U\). Therefore, by
\eqref{eq:regularized_dc}, \(\widehat H_\rho\) is continuously
differentiable on \(U\), and there exists a constant vector \(a\)
such that
\(
\nabla\widehat H_\rho(z)
=
a+\nu z,
\;
z\in U.
\)
Suppose that \(w^k,y^k\in U\). The optimality condition of the convex
DCA subproblem gives
\( 
u^k\in\partial\widehat Q_\rho(y^k).
\)
By~\eqref{eq:Psi_def} and the continuous differentiability of
\(\widehat H_\rho\) on \(U\), the subdifferential sum rule
gives
\[
\partial\Psi_\rho(y^k)
=
\partial\widehat Q_\rho(y^k)
-
\nabla\widehat H_\rho(y^k).
\]
Consequently,
\( 
r^k
:=
u^k-\nabla\widehat H_\rho(y^k)
\in
\partial\Psi_\rho(y^k).
\)  

Moreover, by construction in Algorithm~\ref{MainAlgo},
\( 
u^k\in\partial\widehat H_\rho(w^k).
\) Since \(\widehat H_\rho\) is differentiable on \(U\), this gives
\( 
u^k
=
\nabla\widehat H_\rho(w^k).
\) 
Therefore,
\[
\begin{aligned}
\|r^k\|
&=
\left\|
\nabla\widehat H_\rho(w^k)
-
\nabla\widehat H_\rho(y^k)
\right\|
=
\nu\|w^k-y^k\|
=
\nu\|d^k\|.
\end{aligned}
\]
Finally, since \(r^k\in\partial\Psi_\rho(y^k)\),
\( 
\operatorname{dist}
\bigl(0,\partial\Psi_\rho(y^k)\bigr)
\leq
\|r^k\|
=
\nu\|d^k\|.
\) 
This proves~\eqref{eq:relative_error}.

\end{proof}

We are now ready to state the whole-sequence convergence result.

\begin{theorem}
\label{thm:KL_convergence}

If Algorithm~\ref{MainAlgo} terminates at an iteration \(k\) with
\(d^k=0\), then it returns a critical point of
Problem~\eqref{DCP}. Suppose that the algorithm generates an infinite
sequence and that an accumulation point \(\bar x\) satisfies
Assumption~\ref{ass:no_ties}. Then the sequence has finite length,
\[
\sum_{k=0}^{\infty}\|x^{k+1}-x^k\|<+\infty,
\]
and the whole sequence \(\{x^k\}\) converges to \(\bar x\), which is
a critical point of Problem~\eqref{DCP}.

Moreover, the convergence is \(R\)-linear: there exist constants
\(C>0\), \(\omega\in(0,1)\), and an integer \(K\geq0\) such that
\[
\|x^k-\bar x\|
\leq
C\omega^{k-K},
\;
k\geq K.
\]

\end{theorem}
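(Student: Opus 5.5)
The first assertion is exactly Proposition~\ref{prop:exact_stop}, so the work concerns an infinite sequence with an accumulation point \(\bar x\) satisfying Assumption~\ref{ass:no_ties}. The plan is the standard KL scheme of \cite{AttouchBolteSvaiter2013}, run on the auxiliary sequence \(\{y^k\}\), which carries the relative-error estimate of Lemma~\ref{lem:relative_error}, and then transferred to \(\{x^k\}\). By Theorem~\ref{thm:stationary}, \(\bar x\) is critical, and by Proposition~\ref{prop:diff} we have \(y^{k_\ell}\to\bar x\) and \(w^{k_\ell}\to\bar x\) along a subsequence.

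\textbf{Function values.} By Proposition~\ref{prop:descent}, the sequence \(\Phi_\rho(w^k)\) is nonincreasing and bounded below, so it converges to some \(\bar\Phi\). The bracketing \eqref{eq:bracketing} gives \(\Phi_\rho(w^{k+1})\le\Phi_\rho(y^k)\le\Phi_\rho(w^k)\), so \(\Phi_\rho(y^k)\to\bar\Phi\) and \(\Phi_\rho(x^k)\to\bar\Phi\) as well. Continuity of \(\Phi_\rho\) on \(\mathcal X\) yields \(\Phi_\rho(\bar x)=\bar\Phi\). If \(\Phi_\rho(w^K)=\bar\Phi\) for some \(K\), then \eqref{eq:w_descent} forces \(d^k=0\) for all \(k\ge K\). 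This gives \(x^{k+1}=w^k\), and \eqref{eq:inertial_difference} yields \(\|x^{k+1}-x^k\|\le\theta\|x^k-x^{k-1}\|\), so the convergence is geometric and we are done. Otherwise we assume \(\Phi_\rho(w^k)>\bar\Phi\) for all \(k\).

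\textbf{Potential and key inequality.} I use \(e_k:=\Phi_\rho(w^k)-\bar\Phi\) as the potential. Combining \eqref{eq:w_descent} with \(\lambda_k\ge1\) gives the sufficient decrease \(e_k-e_{k+1}\ge\sigma\|d^k\|^2\). For the error bound, as long as \(w^k,y^k\in U\cap B(\bar x,\varepsilon)\), Lemma~\ref{lem:relative_error} and Proposition~\ref{prop:KL} give
\[
c\,\bigl(\Phi_\rho(y^k)-\bar\Phi\bigr)^{1/2}\le\operatorname{dist}\bigl(0,\partial\Psi_\rho(y^k)\bigr)\le\nu\|d^k\|,
\]
whenever \(\Phi_\rho(y^k)>\bar\Phi\); when \(\Phi_\rho(y^k)=\bar\Phi\) the inequality is trivial. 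Since \(e_{k+1}\le\Phi_\rho(y^k)-\bar\Phi\), it follows that
\[
e_{k+1}\le\frac{\nu^2}{c^2}\|d^k\|^2\le\frac{\nu^2}{c^2\sigma}\,(e_k-e_{k+1}),
\]
hence \(e_{k+1}\le q\,e_k\) with \(q:=\nu^2/(\nu^2+c^2\sigma)\in(0,1)\). The sufficient decrease then gives \(\|d^k\|\le\sqrt{e_k/\sigma}\le C_0\,q^{k/2}\).

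\textbf{Transfer to the iterates.} From \(x^{k+1}-x^k=(w^k-x^k)+\lambda_kd^k\) and \eqref{eq:inertial_difference},
\[
a_{k+1}:=\|x^{k+1}-x^k\|\le\theta a_k+\bar\lambda\|d^k\|.
\]
Unrolling this recursion shows that \(a_k\) is bounded by a geometric sequence with ratio \(\max\{\theta,\sqrt q\}^{1/2}\) or better, up to polynomial factors. Therefore \(\sum_k a_k<\infty\), the tail sums \(\sum_{j\ge k}a_j\) decay geometrically, and \(\|x^k-\bar x\|\le\sum_{j\ge k}a_j\le C\omega^{k-K}\). The same estimates control \(\|w^k-\bar x\|\) and \(\|y^k-\bar x\|\).

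\textbf{Main obstacle: keeping the iterates in the neighbourhood.} The step above only works while \(w^k,y^k\) stay in \(U\cap B(\bar x,\varepsilon)\) with \(\Phi_\rho\) values in \((\bar\Phi,\bar\Phi+\zeta)\), and this is where I expect the real difficulty. I would use the standard induction argument. Choose \(K=k_\ell\) large enough that \(x^K\), \(w^K\) and \(y^K\) lie in a small ball \(B(\bar x,r)\), that \(\|x^K-x^{K-1}\|\) and \(e_K\) are tiny, and that the geometric bound on \(\sum_{j\ge K}a_j\), expressed through \(e_K\) and \(a_K\), is below \(r\). Then I would prove by induction that \(x^k\), \(w^k\) and \(y^k\) remain within distance \(\varepsilon\) of \(\bar x\) for all \(k\ge K\). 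Each step uses the bounds already established up to \(k\), together with \(\|w^k-x^k\|\le\theta a_k\) and \(\|y^k-w^k\|=\|d^k\|\le\sqrt{e_k/\sigma}\). The technical care lies in making the constants of this induction self-consistent, since both the inertial term and the boosted step must be absorbed. Once the induction closes, finite length, convergence of the whole sequence to \(\bar x\), and the \(R\)-linear rate all follow, and criticality of \(\bar x\) comes from Theorem~\ref{thm:stationary}.
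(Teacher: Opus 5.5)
Your proposal is correct and is essentially the paper's proof. The ingredients match one for one: the potential $\Phi_\rho(w^k)$; the KL inequality at $y^k$ combined with Lemma~\ref{lem:relative_error}, which gives the contraction factor $\nu^2/(\nu^2+c^2\sigma)$; the recursion $a_{k+1}\le\theta a_k+\bar\lambda\|d^k\|$; and an induction keeping $x^k$, and hence $w^k,y^k$, in a ball inside $U\cap B(\bar x,\varepsilon)$. Two small points. To invoke the KL property at $\bar x$ you need $\bar x\in\operatorname{dom}\partial\Psi_\rho$, and the paper obtains this from criticality plus local smoothness of $\widehat H_\rho$, giving $0\in\partial\Psi_\rho(\bar x)$. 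Your separate case $\Phi_\rho(w^K)=\bar\Phi$ is vacuous, because an infinite run has $d^k\neq0$ for every $k$, so $\Phi_\rho(w^k)$ is strictly decreasing.
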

\begin{proof}

If the algorithm terminates at iteration \(k\) with \(d^k=0\), then
\(w^k=y^k\), and Proposition~\ref{prop:exact_stop} shows that the
returned point is critical for Problem~\eqref{DCP}.

Suppose that the algorithm generates an infinite sequence, and set
\begin{equation}
\label{v_8aug_1}
F_k:=\Phi_\rho(w^k).
\end{equation}
By~\eqref{eq:w_descent}, \(\{F_k\}\) is nonincreasing. Since
\(w^k\in\mathcal X\) and \(\Phi_\rho\) is continuous on the compact
set \(\mathcal X\), the sequence \(\{F_k\}\) is bounded below.
Hence, there exists \(F_*\in\mathbb R\) such that
\begin{equation}
\label{v_8aug_2}
F_k\to F_*.
\end{equation}

Since \(\bar x\) is an accumulation point of \(\{x^k\}\), there
exists a subsequence \(\{x^{k_\ell}\}\) such that
\(x^{k_\ell}\to\bar x\). By Proposition~\ref{prop:diff},
\(w^{k_\ell}\to\bar x\). Therefore, by the continuity of
\(\Phi_\rho\),
\begin{equation}
\label{v_8aug_3}
F_*
=
\lim_{\ell\to\infty}F_{k_\ell}
=
\lim_{\ell\to\infty}\Phi_\rho(w^{k_\ell})
=
\Phi_\rho(\bar x).
\end{equation}

By Theorem~\ref{thm:stationary} and
\eqref{eq:criticality_equivalence}, \(\bar x\) is a critical point
for the regularized DC decomposition. Assumption~\ref{ass:no_ties}
implies that \(\widehat H_\rho\) is continuously differentiable in
a neighborhood of \(\bar x\). Hence, by~\eqref{eq:Psi_def}, the
criticality condition can be written as
\begin{equation}
\label{v_8aug_4}
0
\in
\partial\widehat G_\rho(\bar x)
+
N_{\mathcal X}(\bar x)
-
\nabla\widehat H_\rho(\bar x)
=
\partial\Psi_\rho(\bar x).
\end{equation}
Thus,
\(\bar x\in\operatorname{dom}\partial\Psi_\rho\).
Proposition~\ref{prop:KL} implies that \(\Psi_\rho\) has the KL
property with exponent \(1/2\) at \(\bar x\). Consequently, there
exist \(c>0\), \(\varepsilon>0\), and \(\zeta>0\) such that
\begin{equation}
\label{v_8aug_5}
\operatorname{dist}
\bigl(0,\partial\Psi_\rho(z)\bigr)
\geq
c
\bigl(
\Psi_\rho(z)-\Psi_\rho(\bar x)
\bigr)^{1/2}
\end{equation}
whenever
\begin{equation}
\label{v_8aug_6}
\|z-\bar x\|<\varepsilon,
\;
\Psi_\rho(\bar x)
<
\Psi_\rho(z)
<
\Psi_\rho(\bar x)+\zeta.
\end{equation}

For \(k\geq1\), set
\(a_k:=\|x^k-x^{k-1}\|\). Using
\(x^{k+1}=w^k+\lambda_kd^k\),
\eqref{eq:inertial_difference}, and
\(\lambda_k\leq\bar\lambda\), we obtain
\begin{equation}
\label{v_8aug_7}
\begin{aligned}
a_{k+1}
&=
\|x^{k+1}-x^k\|
\leq
\|w^k-x^k\|
+
\lambda_k\|d^k\|
\leq
\theta a_k
+
\bar\lambda\|d^k\|.
\end{aligned}
\end{equation}

Let \(U\) be the neighborhood given by
Lemma~\ref{lem:relative_error}. Choose \(\delta>0\) such that
\begin{equation}
\label{v_8aug_8}
\overline B(\bar x,3\delta)
\subset
U\cap B(\bar x,\varepsilon),
\;
\chi
:=
\frac{\nu^2}{\nu^2+c^2\sigma}
\in(0,1).
\end{equation}

By Proposition~\ref{prop:diff} and~\eqref{v_8aug_2},
\begin{equation}
\label{v_8aug_9}
F_k-F_*\to0,
\;
\theta a_k\to0,
\;
\|d^k\|\to0.
\end{equation}
Moreover, since \(x^{k_\ell}\to\bar x\), we have
\begin{equation}
\label{v_8aug_10}
\begin{aligned}
&\|x^{k_\ell}-\bar x\|
+
\frac{\theta a_{k_\ell}}{1-\theta}
+
\frac{\bar\lambda}
{(1-\theta)(1-\sqrt\chi)}
\left(
\frac{F_{k_\ell}-F_*}{\sigma}
\right)^{1/2}
\longrightarrow0.
\end{aligned}
\end{equation}

Therefore, by~\eqref{v_8aug_9}--\eqref{v_8aug_10}, there exists
\(\ell_0\) sufficiently large such that, with
\(K:=k_{\ell_0}\geq1\),
\begin{equation}
\label{v_8aug_11}
F_K-F_*<\zeta,
\;
\theta a_k<\delta,
\;
\|d^k\|<\delta,
\; k\geq K,
\end{equation}
and
\begin{equation}
\label{v_8aug_12}
\|x^K-\bar x\|
+
\frac{\theta a_K}{1-\theta}
+
\frac{\bar\lambda}
{(1-\theta)(1-\sqrt\chi)}
\left(
\frac{F_K-F_*}{\sigma}
\right)^{1/2}
<
\delta.
\end{equation}
Set
\begin{equation}
\label{v_8aug_13}
C_1
:=
\left(
\frac{F_K-F_*}{\sigma}
\right)^{1/2}.
\end{equation}

We prove by induction that, for every \(n\geq K\),
\begin{equation}
\label{v_8aug_14}
x^n\in B(\bar x,\delta),
\;
F_n-F_*
\leq
\chi^{n-K}(F_K-F_*),
\end{equation}
and
\begin{equation}
\label{v_8aug_15}
\|d^n\|
\leq
C_1\chi^{(n-K)/2}.
\end{equation}

For \(n=K\), the first assertion in~\eqref{v_8aug_14} follows
from~\eqref{v_8aug_12}, while
\[
F_K-F_*
=
\chi^{K-K}(F_K-F_*).
\]
Thus,~\eqref{v_8aug_14} holds at \(n=K\). Moreover,
\eqref{eq:w_descent}, \(\lambda_K\geq1\), and \(F_{K+1}\geq F_*\)
give
\[
\sigma\|d^K\|^2
\leq
F_K-F_{K+1}
\leq
F_K-F_*.
\]
Hence,
\[
\|d^K\|
\leq
\left(
\frac{F_K-F_*}{\sigma}
\right)^{1/2}
=
C_1
=
C_1\chi^{(K-K)/2},
\]
so~\eqref{v_8aug_15} also holds at \(n=K\).

Now suppose that, for some \(q\geq K\),
\eqref{v_8aug_14}--\eqref{v_8aug_15} hold for every
\(j=K,\ldots,q\). We prove that both estimates hold at \(q+1\).

By the triangle inequality, \eqref{eq:inertial_difference},
\eqref{v_8aug_11}, and the induction hypothesis,
\begin{equation}
\label{v_8aug_16}
\begin{aligned}
\|w^q-\bar x\|
&
\leq
\theta a_q+\|x^q-\bar x\|
<
\delta+\delta
=
2\delta,
\\
\|y^q-\bar x\|
&
\leq
\|y^q-w^q\|+\|w^q-\bar x\|
=
\|d^q\|+\|w^q-\bar x\|
<
\delta+2\delta
=
3\delta.
\end{aligned}
\end{equation}
It follows from~\eqref{v_8aug_8} and~\eqref{v_8aug_16} that
\( 
w^q,y^q\in U,
\;
y^q\in B(\bar x,\varepsilon).
\) 

Furthermore, by~\eqref{eq:bracketing} and~\eqref{v_8aug_11},
\begin{equation}
\label{v_8aug_17}
F_*
\leq
F_{q+1}
\leq
\Phi_\rho(y^q)
\leq
F_q
\leq
F_K
<
F_*+\zeta.
\end{equation}

By the definition of the convex subproblem,
\(y^q\in\mathcal X\). Moreover,
\(x^{k_\ell}\in\mathcal X\),
\(x^{k_\ell}\to\bar x\), and \(\mathcal X\) is closed; hence,
\(\bar x\in\mathcal X\). Therefore, by~\eqref{eq:Psi_def} and
\eqref{v_8aug_3},
\begin{equation}
\label{v_8aug_18}
\Psi_\rho(y^q)=\Phi_\rho(y^q),
\;
\Psi_\rho(\bar x)=F_*.
\end{equation}

We distinguish two cases. Suppose first that
\(\Phi_\rho(y^q)>F_*\). By~\eqref{v_8aug_17} and
\eqref{v_8aug_18},
\[
\Psi_\rho(\bar x)
<
\Psi_\rho(y^q)
<
\Psi_\rho(\bar x)+\zeta.
\]
Together with \(y^q\in B(\bar x,\varepsilon)\), this verifies
the conditions in~\eqref{v_8aug_6} with \(z=y^q\). Hence,
the KL inequality~\eqref{v_8aug_5} applies at \(y^q\). Moreover, since \(w^q,y^q\in U\),
Lemma~\ref{lem:relative_error} gives
\[
\operatorname{dist}
\bigl(0,\partial\Psi_\rho(y^q)\bigr)
\leq
\nu\|d^q\|.
\]
Combining this inequality with 
~\eqref{v_8aug_5} at \(z=y^q\), gives
\[
c\bigl(\Phi_\rho(y^q)-F_*\bigr)^{1/2}
\leq
\operatorname{dist}
\bigl(0,\partial\Psi_\rho(y^q)\bigr)
\leq
\nu\|d^q\|.
\]
and therefore
\[
\Phi_\rho(y^q)-F_*
\leq
\frac{\nu^2}{c^2}\|d^q\|^2.
\]
Using~\eqref{v_8aug_17}, we obtain
\begin{equation}
\label{v_8aug_19}
F_{q+1}-F_*
\leq
\Phi_\rho(y^q)-F_*
\leq
\frac{\nu^2}{c^2}\|d^q\|^2.
\end{equation}

Suppose now that
\(\Phi_\rho(y^q)=F_*\). Then~\eqref{v_8aug_17} gives
\[
F_*
\leq
F_{q+1}
\leq
\Phi_\rho(y^q)
=
F_*,
\]
so \(F_{q+1}=F_*\). Therefore,
\[
F_{q+1}-F_*
=
\Phi_\rho(y^q)-F_*
=
0,
\]
and~\eqref{v_8aug_19} also holds in this case. Thus,~\eqref{v_8aug_19} holds in both cases. On the other hand,
\eqref{eq:w_descent} and \(\lambda_q\geq1\) give
\begin{equation}
\label{v_8aug_20}
F_q-F_{q+1}
\geq
\sigma\lambda_q^2\|d^q\|^2
\geq
\sigma\|d^q\|^2.
\end{equation}
By~\eqref{v_8aug_19},
\[
\|d^q\|^2
\geq
\frac{c^2}{\nu^2}
\bigl(F_{q+1}-F_*\bigr).
\]
Combining this inequality with~\eqref{v_8aug_20}, we obtain
\[
F_q-F_{q+1}
\geq
\frac{c^2\sigma}{\nu^2}
\bigl(F_{q+1}-F_*\bigr).
\]
Therefore,
\[
\begin{aligned}
F_q-F_*
&=
F_q-F_{q+1}
+
F_{q+1}-F_*
\geq
\left(
1+\frac{c^2\sigma}{\nu^2}
\right)
\bigl(F_{q+1}-F_*\bigr).
\end{aligned}
\]
Using the definition of \(\chi\) in~\eqref{v_8aug_8} and the
induction hypothesis~\eqref{v_8aug_14}, we conclude that
\begin{equation}
\label{v_8aug_21}
F_{q+1}-F_*
\leq
\chi(F_q-F_*)
\leq
\chi^{q+1-K}(F_K-F_*).
\end{equation}
Thus,~\eqref{v_8aug_21} proves the second assertion in
\eqref{v_8aug_14} at \(q+1\).

It remains to prove that \(x^{q+1}\in B(\bar x,\delta)\).
Summing~\eqref{v_8aug_7} from \(j=K\) to \(q\) gives
\[
\sum_{j=K+1}^{q+1}a_j
\leq
\theta a_K
+
\theta\sum_{j=K+1}^{q}a_j
+
\bar\lambda\sum_{j=K}^{q}\|d^j\|.
\]
Since \(a_j\geq0\),
\[
\sum_{j=K+1}^{q}a_j
\leq
\sum_{j=K+1}^{q+1}a_j.
\]
Therefore,
\[
(1-\theta)
\sum_{j=K+1}^{q+1}a_j
\leq
\theta a_K
+
\bar\lambda\sum_{j=K}^{q}\|d^j\|.
\]
By the induction hypothesis~\eqref{v_8aug_15},
\[
\begin{aligned}
\sum_{j=K}^{q}\|d^j\|
&\leq
C_1\sum_{j=K}^{q}(\sqrt\chi)^{j-K}
\leq
\frac{C_1}{1-\sqrt\chi}.
\end{aligned}
\]
Hence,
\begin{equation}
\label{v_8aug_22}
\sum_{j=K+1}^{q+1}a_j
\leq
\frac{\theta a_K}{1-\theta}
+
\frac{\bar\lambda C_1}
{(1-\theta)(1-\sqrt\chi)}.
\end{equation}
Consequently, by~\eqref{v_8aug_12}, \eqref{v_8aug_13}, and
\eqref{v_8aug_22},
\[
\begin{aligned}
\|x^{q+1}-\bar x\|
&=
\left\|
x^K-\bar x
+
\sum_{j=K+1}^{q+1}(x^j-x^{j-1})
\right\|
\leq
\|x^K-\bar x\|
+
\sum_{j=K+1}^{q+1}a_j
<
\delta.
\end{aligned}
\]
Together with~\eqref{v_8aug_21}, this proves both assertions in
\eqref{v_8aug_14} at \(q+1\). Finally, by~\eqref{eq:w_descent},
\(\lambda_{q+1}\geq1\), \(F_{q+2}\geq F_*\), and
\eqref{v_8aug_21},
\[
\begin{aligned}
\|d^{q+1}\|^2
&\leq
\frac{F_{q+1}-F_{q+2}}{\sigma}
\leq
\frac{F_{q+1}-F_*}{\sigma}
\leq
\frac{\chi^{q+1-K}(F_K-F_*)}{\sigma}
=
C_1^2\chi^{q+1-K},
\end{aligned}
\]
where~\eqref{v_8aug_13} was used in the last equality. Hence,
\[
\|d^{q+1}\|
\leq
C_1\chi^{(q+1-K)/2},
\]
which proves~\eqref{v_8aug_15} at \(q+1\). Thus,~\eqref{v_8aug_14}--\eqref{v_8aug_15} hold at \(q+1\).
This completes the induction step. Therefore, by induction,
\eqref{v_8aug_14}--\eqref{v_8aug_15} hold for every \(n\geq K\).

Letting \(q\to\infty\) in~\eqref{v_8aug_22} gives
\( 
\sum_{j=K+1}^{\infty}a_j
<
+\infty.
\) 
Adding the finite initial part, we obtain
\[
\begin{aligned}
\sum_{k=0}^{\infty}\|x^{k+1}-x^k\|
&=
\sum_{j=1}^{\infty}a_j
=
\sum_{j=1}^{K}a_j
+
\sum_{j=K+1}^{\infty}a_j
<
+\infty.
\end{aligned}
\]
Thus, \(\{x^k\}\) has finite length and is a Cauchy sequence, so it
converges. Since \(\bar x\) is an accumulation point, the whole
sequence converges to \(\bar x\), which is critical by
Theorem~\ref{thm:stationary}.

It remains to establish the convergence rate. Combining
\eqref{v_8aug_7} and~\eqref{v_8aug_15} gives
\begin{equation}
\label{v_8aug_23}
a_{k+1}
\leq
\theta a_k
+
\bar\lambda C_1(\sqrt\chi)^{k-K},
\;
k\geq K.
\end{equation}
Set
\( 
\omega_0:=\max\{\theta,\sqrt\chi\}
\;\text{and choose}\;
\omega\in(\omega_0,1).
\) 
Iterating~\eqref{v_8aug_23}, for every \(r\geq1\), gives
\begin{equation}
\label{v_8aug_24}
\begin{aligned}
a_{K+r}
&\leq
\theta^r a_K
+
\bar\lambda C_1
\sum_{j=0}^{r-1}
\theta^{r-1-j}(\sqrt\chi)^j
\leq
a_K\omega^r
+
\bar\lambda C_1r\omega_0^{r-1}.
\end{aligned}
\end{equation}
Since \(\omega_0/\omega<1\), the constant
\( 
D_0
:=
\sup_{r\geq1}
r\left(\frac{\omega_0}{\omega}\right)^{r-1}
\) 
is finite. Therefore,~\eqref{v_8aug_24} gives
\[
a_{K+r}
\leq
\left(
a_K+\frac{\bar\lambda C_1D_0}{\omega}
\right)
\omega^r.
\]
Hence, there exists \(C_2>0\) such that
\begin{equation}
\label{v_8aug_25}
a_k
\leq
C_2\omega^{k-K},
\;
k\geq K.
\end{equation}

Finally, since \(x^k\to\bar x\), the telescoping identity and
\eqref{v_8aug_25} give
\[
\begin{aligned}
\|x^k-\bar x\|
&=
\left\|
\sum_{j=k+1}^{\infty}(x^{j-1}-x^j)
\right\|
\leq
\sum_{j=k+1}^{\infty}a_j
\leq
C_2\sum_{j=k+1}^{\infty}\omega^{j-K}
=
\frac{C_2\omega}{1-\omega}
\omega^{k-K}.
\end{aligned}
\]
Setting
\( 
C:=\frac{C_2\omega}{1-\omega}
\) 
yields
\(
\|x^k-\bar x\|
\leq
C\omega^{k-K},
\;
k\geq K.
\)
Thus, the convergence is \(R\)-linear.

\end{proof}
\section{Numerical Results}
\label{sec5}

This section evaluates the computational performance of the proposed
projected inertial BDCA (iBDCA) and the out-of-sample performance of
the resulting portfolio strategies. We first compare iBDCA with DCA
and BDCA on matched rolling instances, then examine the robustness of
the three methods with respect to the VaR threshold, and finally
evaluate the resulting portfolio strategies against the equal-weight
(EW) and buy-and-hold (BH) benchmarks.

\subsection{Experimental Setup}
\label{subsec:experimental_setup}

We consider two datasets, each containing adjusted daily prices for
\(n=30\) stocks selected from the NASDAQ and S\&P 500, respectively,
over the period from August 8, 2022 to August 5, 2026. The lists of assets are reported in
Appendix~\ref{tab:asset_lists}.

At each rebalancing date, \(S=500\) equally weighted scenario paths
of length \(T=10\) are generated from the most recent 252 returns
using a moving-block bootstrap with block size 5. The multi-period
problem is solved, and only its first-period allocation is implemented.
Let \(N_{\mathrm{OS}}\) denote the number of out-of-sample rebalancing
periods; in our experiments, \(N_{\mathrm{OS}}=749\).

For each dataset, we perform \(N_{\mathrm{run}}=60\) independent
replications under two initialization schemes. The seed for replication
\(r\) and scheme \(h\in\{1,2\}\) is
\(\texttt{2026+2*(r-1)+h}\). Initial points are generated by
normalizing independent random variables
\(z_i\sim\operatorname{Gamma}(a_h,1)\), with \(a_1=100\) and
\(a_2=0.5\), producing nearly uniform and more dispersed initial
points, respectively.

The scenario probabilities are \(p_j=1/S\), \(j=1,\ldots,S\).
Unless otherwise stated, we use \(\lambda_1=1\),
\(\lambda_2=0.002\), \(\rho=5\), \(\alpha=0.95\),
\(\tau=0.02\), \(\gamma=0.5/S=0.001\), and
\(c_i=5\times10^{-4}\). For \(S=500\),
equation~\eqref{v_5aug_1} gives \(\bar\varepsilon=0.002\), so
\(0<\gamma<\bar\varepsilon\), as required in
Lemma~\ref{lem:VaRDC}. 

For iBDCA, we use \(\theta=0.01\), \(\nu=2\times10^{-3}\),
\(\sigma=5\times10^{-4}\), \(\eta=0.5\),
\(\bar\lambda=1.05\), \(K_{\max}=50\), and
\(\varepsilon_{\mathrm{tol}}=10^{-5}\). Preliminary tests indicated no
consistent improvement for larger values of \(\theta\), so we use
\(\theta=0.01\) throughout. BDCA uses the same parameter values with
\(\theta=0\). The stopping test is
\(\|d^k\|\leq\varepsilon_{\mathrm{tol}}\max\{1,\|w^k\|\}\), and the
VaR feasibility tolerance is \(10^{-6}\).

All experiments are implemented in MATLAB R2026a using CVX~2.2 and
MOSEK~11.2.2 and performed on a Mac mini M4 Pro with a 12-core CPU
and 48 GB unified memory.

\subsection{Matched Solver Comparison}
\label{subsec:matched_solver}

For each matched instance \((r,\ell)\), let \(x^{(r,\ell)}\) denote
the BDCA or iBDCA solution and \(x_{\mathrm{DCA}}^{(r,\ell)}\) the
corresponding DCA solution. The paired penalized-objective difference
relative to DCA is defined by
\(
\Delta\Phi^{(r,\ell)}
:=
\Phi_\rho\bigl(x^{(r,\ell)}\bigr)
-
\Phi_\rho\bigl(x_{\mathrm{DCA}}^{(r,\ell)}\bigr).
\)
Constraint satisfaction is assessed through the aggregate VaR
violation \(R(x)\) and the maximum periodwise violation
\[
V(x)
:=
\max_{t=1,\ldots,T}
\left(
\operatorname{VaR}_{\alpha}
\left(
-(\xi^t)^\top x^t
\right)
-\tau
\right)_+.
\]
A solution is regarded as scenario-feasible if \(V(x)\leq10^{-6}\).

For each replication, the objective difference, aggregate violation,
iteration count, and solution time are averaged over the
\(N_{\mathrm{OS}}\) rolling instances. The win and scenario-feasible
rates are the percentages of instances with
\(\Delta\Phi^{(r,\ell)}<0\) and scenario-feasible solutions,
respectively. Table~\ref{tab:solver_results} reports the mean and
standard deviation of these run-level quantities over the independent
replications.

\begin{table}[H]
\centering
\footnotesize
\setlength{\tabcolsep}{2pt}
\resizebox{\linewidth}{!}{%
\begin{tabular}{cclcccccc}
\hline
Dataset
& Scheme
& Method
& \shortstack{Mean \(\Delta\Phi\)\\\(\downarrow\)}
& \shortstack{Win (\%)\\\(\uparrow\)}
& \shortstack{Mean \(R\)\\\(\downarrow\)}
& \shortstack{Feas. (\%)\\\(\uparrow\)}
& \shortstack{Iter.\\\(\downarrow\)}
& \shortstack{Time (s)\\\(\downarrow\)} \\
\hline
\textbf{NASDAQ}
& 1 & DCA
& \(0\)
& --
& \((\mathbf{7.6418}\pm2.3315)\times10^{-18}\)
& \(\mathbf{100.00}\pm0.00\)
& \(\mathbf{8.21}\pm0.04\)
& \(\mathbf{10.9431}\pm0.9855\) \\
& 1 & BDCA
& \((-2.1453\pm0.0747)\times10^{-4}\)
& \(88.31\pm1.40\)
& \((9.9283\pm34.2960)\times10^{-15}\)
& \(\mathbf{100.00}\pm0.00\)
& \(9.99\pm0.13\)
& \(13.3911\pm1.2711\) \\
& 1 & iBDCA
& \((\mathbf{-1.6610}\pm0.0339)\times10^{-3}\)
& \(\mathbf{93.51}\pm0.85\)
& \((1.9456\pm4.2814)\times10^{-14}\)
& \(\mathbf{100.00}\pm0.00\)
& \(11.45\pm0.16\)
& \(15.6045\pm1.4744\) \\
\hline
& 2 & DCA
& \(0\)
& --
& \((2.1837\pm1.4014)\times10^{-5}\)
& \(99.28\pm0.35\)
& \(\mathbf{8.32}\pm0.06\)
& \(\mathbf{10.2419}\pm0.7217\) \\
& 2 & BDCA
& \((-3.6374\pm0.2978)\times10^{-4}\)
& \(89.19\pm0.83\)
& \((1.7596\pm1.2181)\times10^{-5}\)
& \(99.35\pm0.33\)
& \(10.21\pm0.08\)
& \(12.6528\pm0.8957\) \\
& 2 & iBDCA
& \((\mathbf{-3.0923}\pm0.1577)\times10^{-3}\)
& \(\mathbf{94.66}\pm0.73\)
& \((\mathbf{3.9239}\pm3.9897)\times10^{-6}\)
& \(\mathbf{99.71}\pm0.26\)
& \(11.93\pm0.19\)
& \(15.0729\pm0.9885\) \\
\hline\hline
\textbf{S\&P 500}
& 1 & DCA
& \(0\)
& --
& \(\mathbf{0}\pm0\)
& \(\mathbf{100.00}\pm0.00\)
& \(\mathbf{8.93}\pm0.04\)
& \(\mathbf{11.2420}\pm0.6502\) \\
& 1 & BDCA
& \((-1.5775\pm0.0818)\times10^{-4}\)
& \(90.84\pm1.19\)
& \(\mathbf{0}\pm0\)
& \(\mathbf{100.00}\pm0.00\)
& \(10.75\pm0.16\)
& \(13.6297\pm0.8910\) \\
& 1 & iBDCA
& \((\mathbf{-5.1354}\pm0.0389)\times10^{-3}\)
& \(\mathbf{100.00}\pm0.00\)
& \((1.2137\pm2.5985)\times10^{-14}\)
& \(\mathbf{100.00}\pm0.00\)
& \(16.13\pm0.18\)
& \(21.3088\pm1.3233\) \\
\hline
& 2 & DCA
& \(0\)
& --
& \(\mathbf{0}\pm0\)
& \(\mathbf{100.00}\pm0.00\)
& \(\mathbf{9.04}\pm0.06\)
& \(\mathbf{10.8654}\pm0.7081\) \\
& 2 & BDCA
& \((-2.8604\pm0.1452)\times10^{-4}\)
& \(93.18\pm0.90\)
& \(\mathbf{0}\pm0\)
& \(\mathbf{100.00}\pm0.00\)
& \(11.16\pm0.14\)
& \(13.4934\pm0.9111\) \\
& 2 & iBDCA
& \((\mathbf{-6.5920}\pm0.1048)\times10^{-3}\)
& \(\mathbf{99.98}\pm0.05\)
& \((1.0893\pm2.8465)\times10^{-15}\)
& \(\mathbf{100.00}\pm0.00\)
& \(16.44\pm0.20\)
& \(20.7334\pm1.3499\) \\
\hline
\end{tabular}%
}
\caption{Matched solver comparison under the two
initialization schemes.}
\label{tab:solver_results}
\end{table}

Figure~\ref{fig:objective_differences} shows the mean paired
differences in the final penalized objective values at each rolling
period, with pointwise \(95\%\) confidence intervals across the
independent replications. Figure~\ref{fig:convergence} shows the
within-instance evolution of \(\Phi_\rho\) and the relative residual
\(\|d^k\|/\max\{1,\|w^k\|\}\). For each dataset and initialization
scheme, the representative matched instance is selected as the one
whose final DCA objective is closest to the median over all
replications and rolling periods.

\begin{figure}[H]
\centering
\begin{subfigure}{0.48\textwidth}
\centering
\includegraphics[width=\linewidth]
{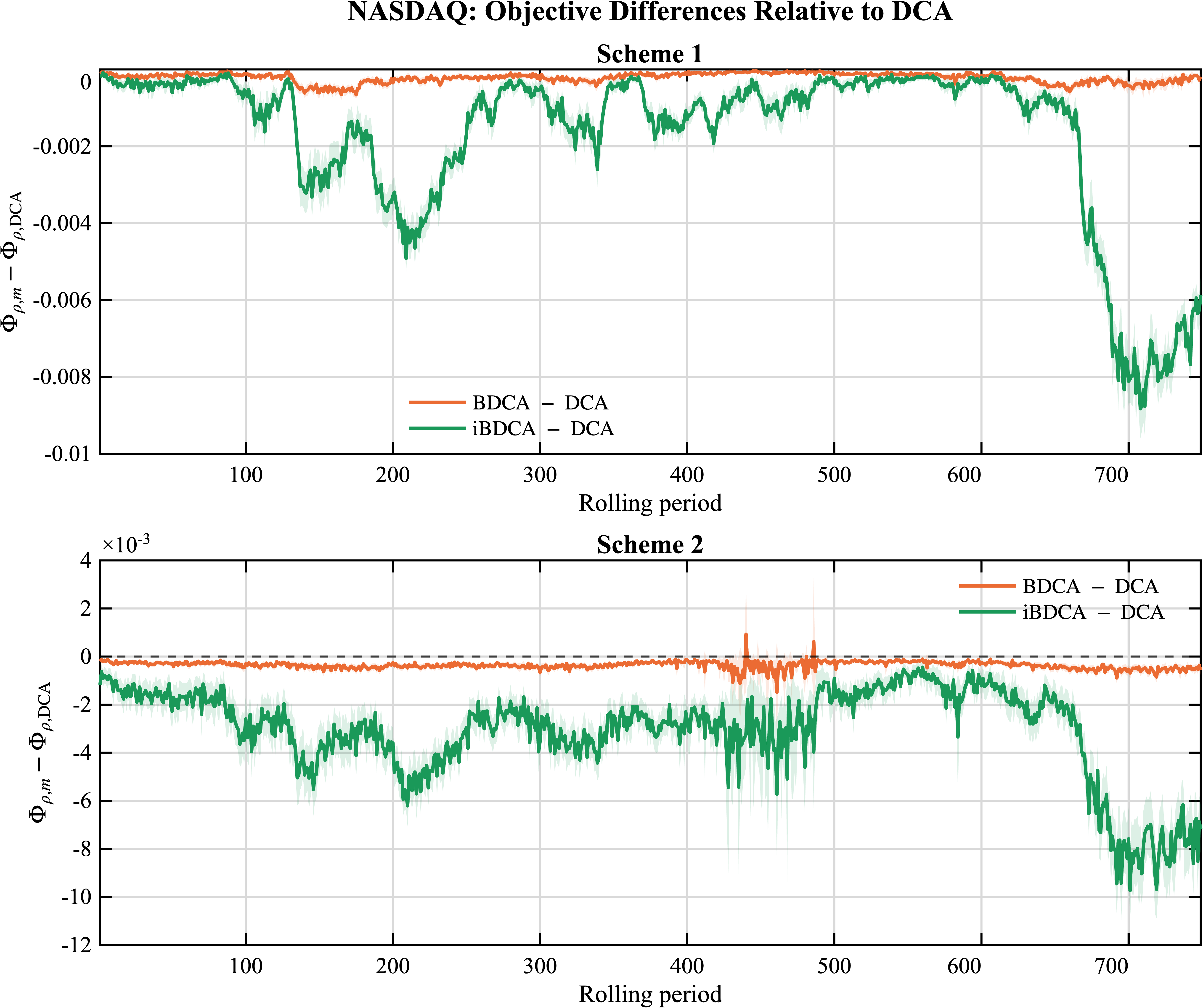}
\caption{NASDAQ}
\label{fig:objective_differences_nasdaq}
\end{subfigure}
\hfill
\begin{subfigure}{0.48\textwidth}
\centering
\includegraphics[width=\linewidth]
{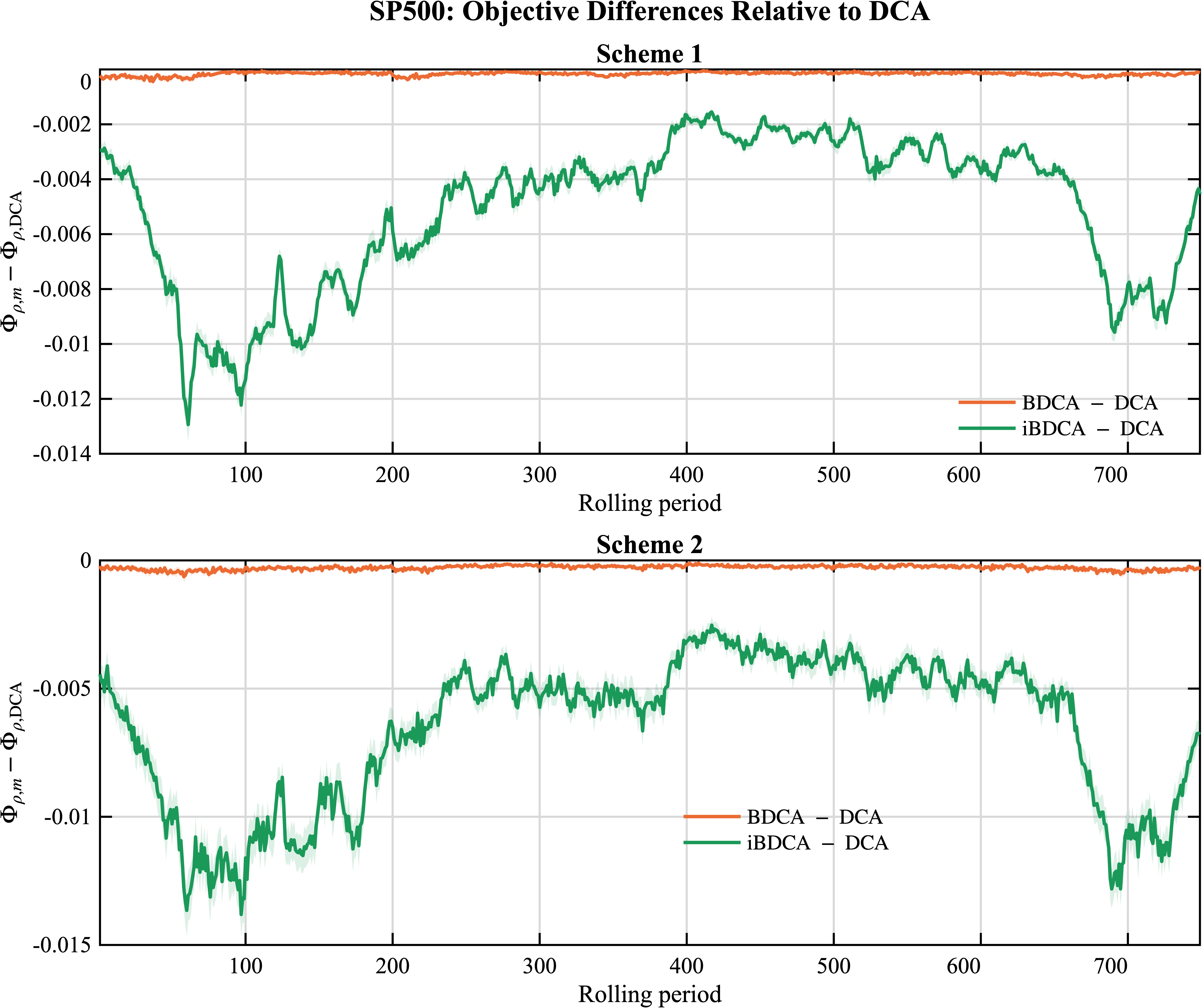}
\caption{S\&P 500}
\label{fig:objective_differences_sp500}
\end{subfigure}
\caption{Objective differences relative to DCA.}
\label{fig:objective_differences}
\end{figure}

\begin{figure}[H]
\centering
\begin{subfigure}{0.48\textwidth}
\centering
\includegraphics[width=\linewidth]
{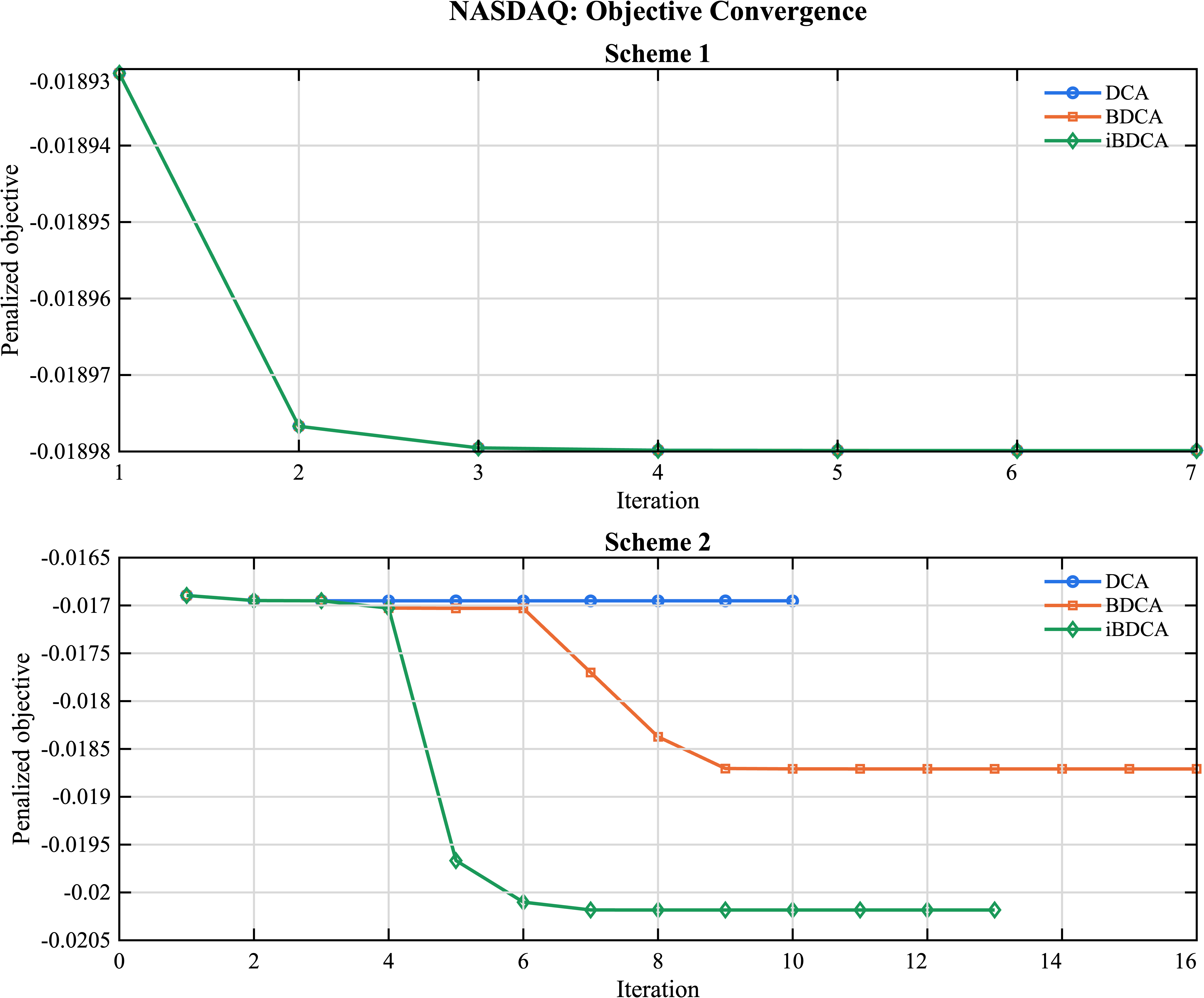}
\caption{NASDAQ: objective}
\label{fig:objective_convergence_nasdaq}
\end{subfigure}
\hfill
\begin{subfigure}{0.48\textwidth}
\centering
\includegraphics[width=\linewidth]
{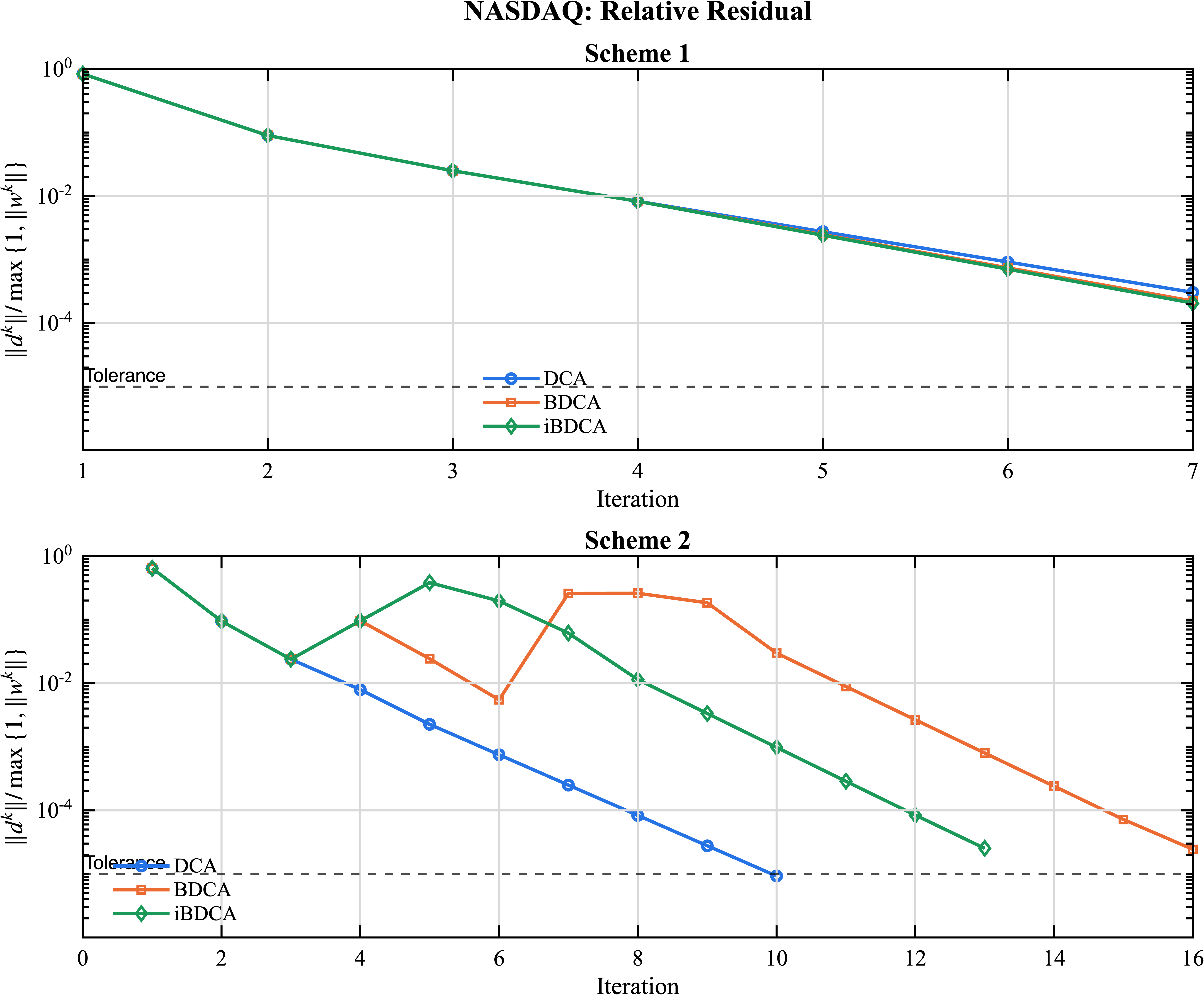}
\caption{NASDAQ: residual}
\label{fig:residual_convergence_nasdaq}
\end{subfigure}

\medskip

\begin{subfigure}{0.48\textwidth}
\centering
\includegraphics[width=\linewidth]
{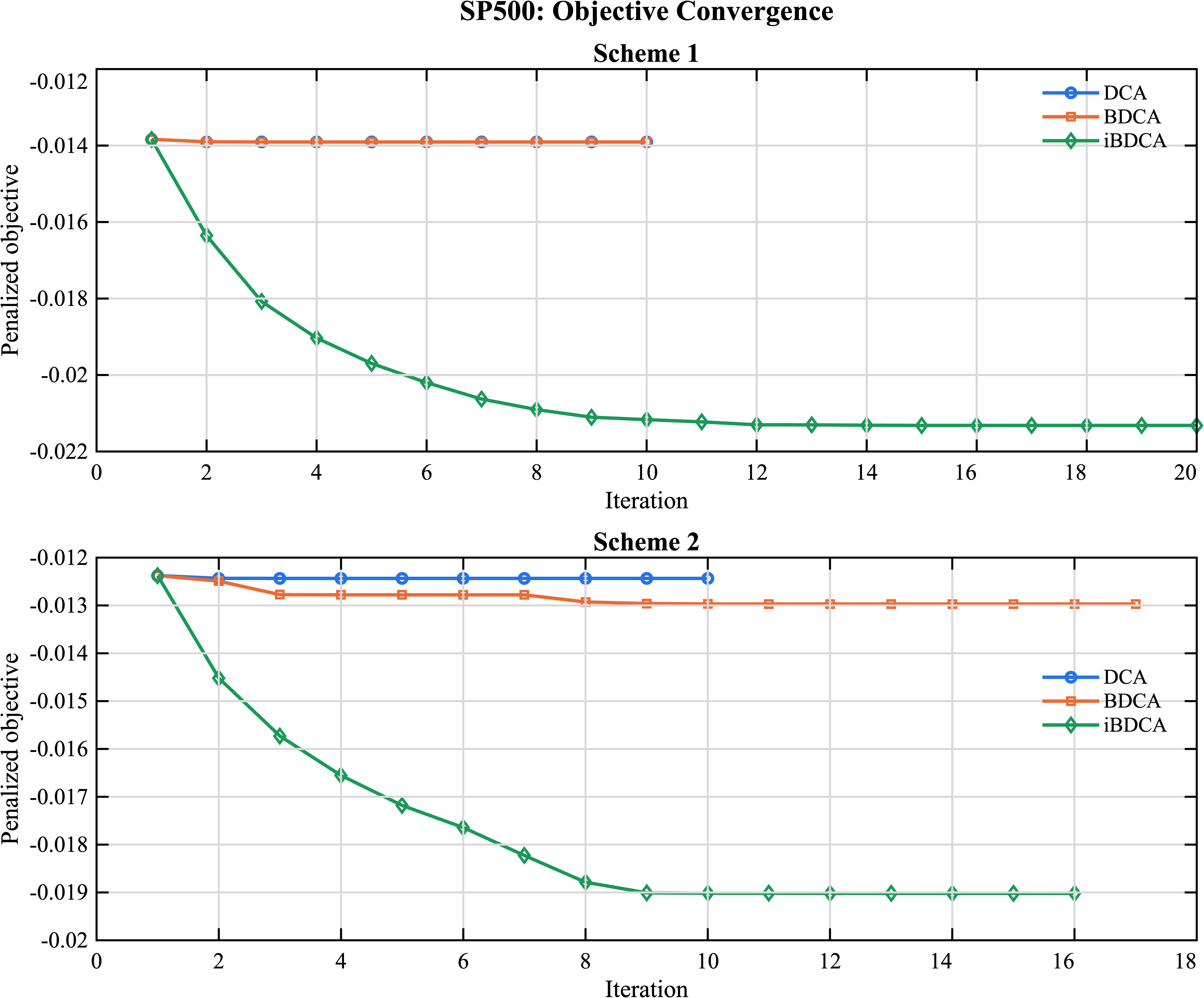}
\caption{S\&P 500: objective}
\label{fig:objective_convergence_sp500}
\end{subfigure}
\hfill
\begin{subfigure}{0.48\textwidth}
\centering
\includegraphics[width=\linewidth]
{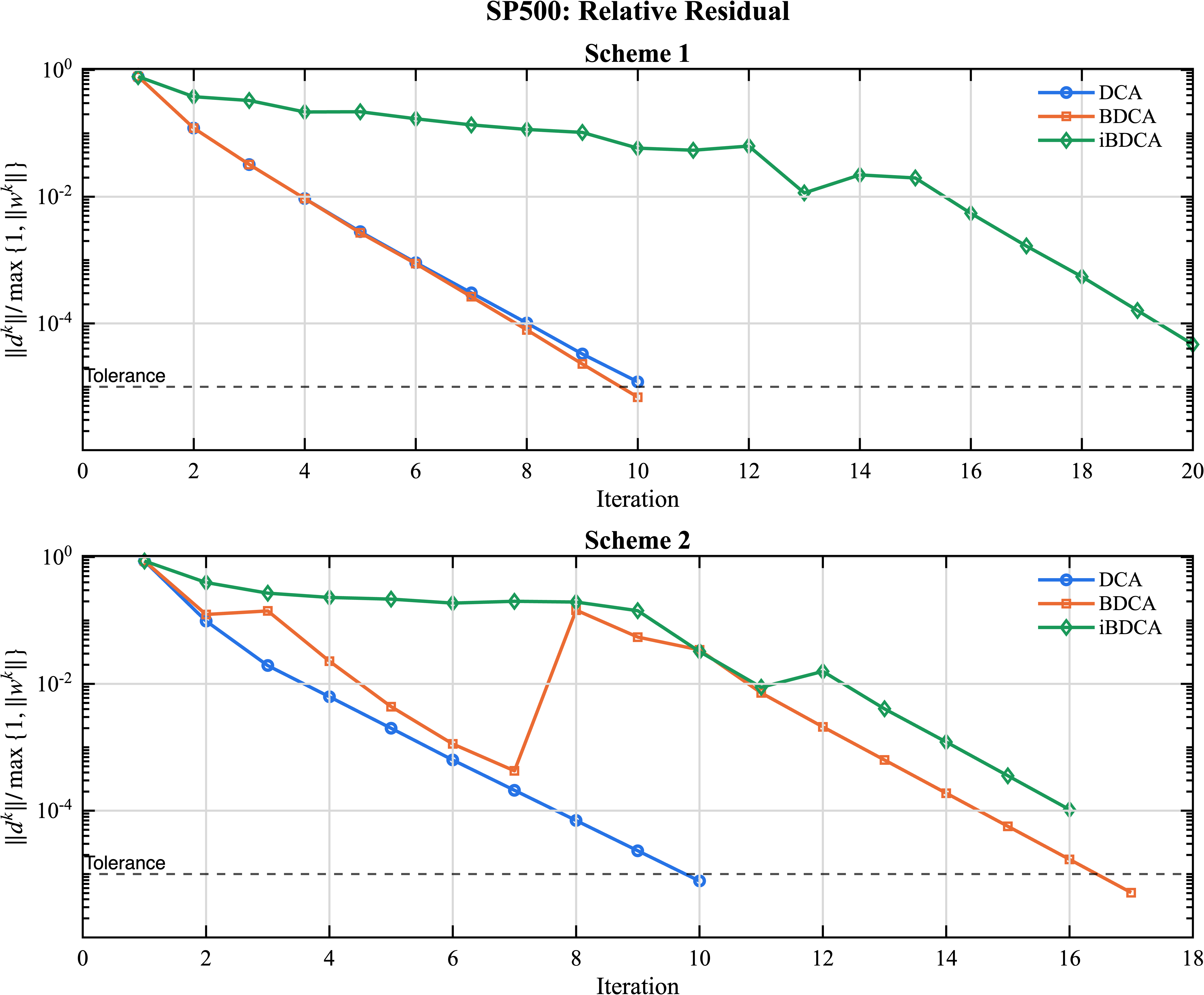}
\caption{S\&P 500: residual}
\label{fig:residual_convergence_sp500}
\end{subfigure}
\caption{Convergence behavior of DCA, BDCA, and iBDCA.}
\label{fig:convergence}
\end{figure}
\begin{remark}
The matched solver results reveal a clear trade-off between attained
penalized-objective value and computational cost. Since the problem is
nonconvex, the methods may converge to different critical points. On
both datasets, BDCA attains moderately lower objective values than DCA,
whereas iBDCA achieves substantially lower objective values and the
highest win rates, at the expense of more iterations and longer solution
times. For NASDAQ under Scheme~1, all three methods are fully
scenario feasible and the aggregate VaR violations are essentially at
numerical precision. Under Scheme~2, iBDCA yields the smallest mean
aggregate VaR violation and the highest scenario-feasible rate. For the
S\&P 500 dataset, all three methods attain a scenario-feasible rate of
\(100\%\) under both initialization schemes, while the corresponding
aggregate VaR violations are zero or within numerical
precision. Thus,
iBDCA attains better penalized-objective values while maintaining a high
level of scenario-based VaR feasibility, although this improvement comes
with higher computational cost.
\end{remark}
\subsection{Robustness with Respect to the VaR Threshold}
\label{subsec:robustness}

To isolate the effect of the VaR threshold, we fix Scheme~2 and use the
same matched rolling instances and evaluation metrics as in
Subsection~\ref{subsec:matched_solver}. We assess the robustness of the
three methods for
\(\tau\in\{1.5,\;1.75,\;2\}\times10^{-2}\), while keeping all other
parameters fixed. Guided by the empirical \(95\%\) VaR levels of the
individual assets in the two datasets, these values are chosen to
represent increasingly less restrictive risk requirements, from the
most stringent case \(\tau=0.015\) to the reference case
\(\tau=0.02\). For each \(\tau\), objective differences and win rates
for BDCA and iBDCA are computed relative to DCA.
\begin{table}[H]
\centering
\footnotesize
\setlength{\tabcolsep}{2pt}
\renewcommand{\arraystretch}{0.88}

\resizebox{\linewidth}{!}{%
\begin{tabular}{cllcccccc}
\hline
Dataset
& \(\tau\;(\times10^{-2})\)
& Method
& \shortstack{Mean \(\Delta\Phi\)\\\(\downarrow\)}
& \shortstack{Win (\%)\\\(\uparrow\)}
& \shortstack{Mean \(R\)\\\(\downarrow\)}
& \shortstack{Feas. (\%)\\\(\uparrow\)}
& \shortstack{Mean Iter.\\\(\downarrow\)}
& \shortstack{Time (s)\\\(\downarrow\)} \\
\hline
\textbf{NASDAQ}
& \(1.5\)
& DCA
& \(0\)
& --
& \((3.4391\pm0.8821)\times10^{-4}\)
& \(92.70\pm0.83\)
& \(\mathbf{7.77}\pm0.07\)
& \(\mathbf{10.3494}\pm0.6191\) \\
&
& BDCA
& \((-4.6135\pm1.3292)\times10^{-4}\)
& \(85.76\pm1.32\)
& \((2.8895\pm0.7750)\times10^{-4}\)
& \(93.47\pm0.62\)
& \(9.01\pm0.15\)
& \(12.0547\pm0.7175\) \\
&
& iBDCA
& \((\mathbf{-1.8566}\pm0.3875)\times10^{-3}\)
& \(\mathbf{88.12}\pm1.03\)
& \((\mathbf{1.3277}\pm0.4386)\times10^{-4}\)
& \(\mathbf{95.39}\pm0.69\)
& \(9.87\pm0.16\)
& \(13.2474\pm0.7221\) \\
\cline{2-9}
& \(1.75\)
& DCA
& \(0\)
& --
& \((9.3627\pm4.4647)\times10^{-5}\)
& \(98.30\pm0.50\)
& \(\mathbf{8.03}\pm0.05\)
& \(\mathbf{10.4549}\pm0.7436\) \\
&
& BDCA
& \((-3.2943\pm0.5926)\times10^{-4}\)
& \(87.45\pm1.07\)
& \((7.8438\pm4.3138)\times10^{-5}\)
& \(98.52\pm0.45\)
& \(9.62\pm0.10\)
& \(12.5830\pm0.9155\) \\
&
& iBDCA
& \((\mathbf{-1.9427}\pm0.2519)\times10^{-3}\)
& \(\mathbf{91.80}\pm0.78\)
& \((\mathbf{2.7450}\pm1.3601)\times10^{-5}\)
& \(\mathbf{99.10}\pm0.41\)
& \(10.86\pm0.12\)
& \(14.3282\pm1.0141\) \\
\cline{2-9}
& \(2\)
& DCA
& \(0\)
& --
& \((2.1837\pm1.4014)\times10^{-5}\)
& \(99.28\pm0.35\)
& \(\mathbf{8.32}\pm0.06\)
& \(\mathbf{10.2419}\pm0.7217\) \\
&
& BDCA
& \((-3.6374\pm0.2978)\times10^{-4}\)
& \(89.19\pm0.83\)
& \((1.7596\pm1.2181)\times10^{-5}\)
& \(99.35\pm0.33\)
& \(10.21\pm0.08\)
& \(12.6528\pm0.8957\) \\
&
& iBDCA
& \((\mathbf{-3.0923}\pm0.1577)\times10^{-3}\)
& \(\mathbf{94.66}\pm0.73\)
& \((\mathbf{3.9239}\pm3.9897)\times10^{-6}\)
& \(\mathbf{99.71}\pm0.26\)
& \(11.93\pm0.19\)
& \(15.0729\pm0.9885\) \\
\hline\hline
\textbf{S\&P 500}
& \(1.5\)
& DCA
& \(0\)
& --
& \((5.4816\pm17.9187)\times10^{-7}\)
& \(99.98\pm0.05\)
& \(\mathbf{8.90}\pm0.06\)
& \(\mathbf{11.2023}\pm0.2666\) \\
&
& BDCA
& \((-2.7996\pm0.1597)\times10^{-4}\)
& \(93.09\pm1.18\)
& \((5.4705\pm17.8803)\times10^{-7}\)
& \(99.98\pm0.05\)
& \(11.18\pm0.13\)
& \(14.1878\pm0.4199\) \\
&
& iBDCA
& \((\mathbf{-4.2599}\pm0.0841)\times10^{-3}\)
& \(\mathbf{98.89}\pm0.42\)
& \((\mathbf{5.1799}\pm17.9437)\times10^{-7}\)
& \(\mathbf{99.99}\pm0.04\)
& \(14.40\pm0.18\)
& \(19.0239\pm0.5020\) \\
\cline{2-9}
& \(1.75\)
& DCA
& \(0\)
& --
& \(\mathbf{0}\pm0\)
& \(\mathbf{100.00}\pm0.00\)
& \(\mathbf{9.03}\pm0.04\)
& \(\mathbf{11.1232}\pm0.6823\) \\
&
& BDCA
& \((-2.9076\pm0.1373)\times10^{-4}\)
& \(93.47\pm0.84\)
& \(\mathbf{0}\pm0\)
& \(\mathbf{100.00}\pm0.00\)
& \(11.31\pm0.10\)
& \(14.0562\pm0.8911\) \\
&
& iBDCA
& \((\mathbf{-5.6599}\pm0.0921)\times10^{-3}\)
& \(\mathbf{99.79}\pm0.19\)
& \((5.8746\pm19.4277)\times10^{-14}\)
& \(\mathbf{100.00}\pm0.00\)
& \(15.58\pm0.17\)
& \(20.2118\pm1.2734\) \\
\cline{2-9}
& \(2\)
& DCA
& \(0\)
& --
& \(\mathbf{0}\pm0\)
& \(\mathbf{100.00}\pm0.00\)
& \(\mathbf{9.04}\pm0.06\)
& \(\mathbf{10.8654}\pm0.7081\) \\
&
& BDCA
& \((-2.8604\pm0.1452)\times10^{-4}\)
& \(93.18\pm0.90\)
& \(\mathbf{0}\pm0\)
& \(\mathbf{100.00}\pm0.00\)
& \(11.16\pm0.14\)
& \(13.4934\pm0.9111\) \\
&
& iBDCA
& \((\mathbf{-6.5920}\pm0.1048)\times10^{-3}\)
& \(\mathbf{99.98}\pm0.05\)
& \((1.0893\pm2.8465)\times10^{-15}\)
& \(\mathbf{100.00}\pm0.00\)
& \(16.44\pm0.20\)
& \(20.7334\pm1.3499\) \\
\hline
\end{tabular}%
}

\caption{Robustness with respect to the VaR threshold \(\tau\) under
Scheme~2.}
\label{tab:tau_robustness}
\end{table}

\begin{remark}
The robustness results show that the relative behavior of the three
methods is stable across the tested VaR thresholds. For both datasets,
iBDCA attains the lowest penalized-objective values and the highest win
rates for all values of \(\tau\), while DCA remains the least expensive
method in terms of iteration counts and solution times.

For NASDAQ, tightening the VaR threshold has a visible effect on
scenario feasibility. At \(\tau=0.015\), the feasible rates of DCA,
BDCA, and iBDCA are \(92.70\%\), \(93.47\%\), and \(95.39\%\),
respectively. These rates increase to \(98.30\%\), \(98.52\%\), and
\(99.10\%\) at \(\tau=0.0175\), and to \(99.28\%\), \(99.35\%\), and
\(99.71\%\) at \(\tau=0.02\). The corresponding aggregate VaR
violations also decrease as \(\tau\) is relaxed, with iBDCA producing
the smallest mean violation at each tested threshold.

For the S\&P 500 dataset, all three methods remain essentially
scenario feasible throughout the tested range of \(\tau\), and the
aggregate VaR violations are zero or at numerical precision except for
very small values at the most restrictive threshold. Hence, the
advantage of iBDCA in attained penalized-objective value is observed
consistently across the tested risk thresholds without deterioration in
scenario-based VaR feasibility. Overall, the results indicate that the
solution-quality advantage of iBDCA is robust with respect to the VaR
threshold, although it comes at a higher computational cost.
\end{remark}

\subsection{Out-of-Sample Portfolio Performance}
\label{subsec:portfolio_performance}

In the out-of-sample backtest, each optimization method maintains its
own previous holding, so its wealth and transaction costs reflect its
own sequence of portfolio decisions. The EW strategy is rebalanced to
equal weights at every rolling period, whereas the BH strategy starts
from equal weights and is allowed to drift without subsequent
rebalancing. Transaction costs are charged whenever rebalancing is
performed.

Table~\ref{tab:portfolio_results} reports final wealth, the annualized
Sharpe ratio, maximum drawdown (MDD), empirical exceedance rate (ER),
and out-of-sample VaR residual. Wealth, Sharpe ratio, and MDD are
computed from net returns after transaction costs, whereas ER and the
VaR residual are computed from gross portfolio losses. For replication
\(r\), we define
\[
\operatorname{ER}^{(r)}
:=
\frac{1}{N_{\mathrm{OS}}}
\sum_{\ell=1}^{N_{\mathrm{OS}}}
\mathbf{1}
\left\{
L^{(r,\ell)}>\tau
\right\}
\; \text{and}\;
\operatorname{VR}_{\mathrm{OS}}^{(r)}
:=
\left(
\widehat{\operatorname{VaR}}_{\alpha}
\bigl(L^{(r)}\bigr)-\tau
\right)_+,
\]
where
\(L^{(r)}=(L^{(r,1)},\ldots,L^{(r,N_{\mathrm{OS}})})\) and
\(\widehat{\operatorname{VaR}}_{\alpha}(L^{(r)})\) is its empirical
\(\alpha\)-quantile. For DCA, BDCA, and iBDCA, the reported values are
the mean and standard deviation over the \(N_{\mathrm{run}}\)
replications, and boldface indicates the best mean among the three
methods within each initialization scheme.
\begin{table}[H]
\centering
\footnotesize
\setlength{\tabcolsep}{3pt}
\resizebox{\linewidth}{!}{%
\begin{tabular}{cclccccc}
\hline
Dataset
& Scheme
& Method
& \shortstack{Wealth\\\(\uparrow\)}
& \shortstack{Sharpe\\\(\uparrow\)}
& \shortstack{MDD\\\(\downarrow\)}
& \shortstack{ER\\\(\downarrow\)}
& \shortstack{OS VaR\\residual \(\downarrow\)} \\
\hline
\textbf{NASDAQ}
& -- & EW
& \(1.8811\)
& \(1.3063\)
& \(0.2006\)
& \(0.0347\)
& \(0\) \\
& -- & BH
& \(1.9741\)
& \(1.2550\)
& \(0.2156\)
& \(0.0454\)
& \(0\) \\
\hline
& 1 & DCA
& \(3.1441\pm0.2512\)
& \(1.4971\pm0.0939\)
& \(0.2427\pm0.0151\)
& \(0.0955\pm0.0047\)
& \(\mathbf{6.1856\times10^{-3}}\pm6.9270\times10^{-4}\) \\
& 1 & BDCA
& \(3.1322\pm0.2685\)
& \(1.4900\pm0.1011\)
& \(0.2426\pm0.0176\)
& \(\mathbf{0.0942}\pm0.0045\)
& \(6.2449\times10^{-3}\pm7.7550\times10^{-4}\) \\
& 1 & iBDCA
& \(\mathbf{3.1535}\pm0.2599\)
& \(\mathbf{1.4985}\pm0.0934\)
& \(\mathbf{0.2414}\pm0.0171\)
& \(0.0949\pm0.0044\)
& \(6.2664\times10^{-3}\pm6.4470\times10^{-4}\) \\
\hline
& 2 & DCA
& \(\mathbf{3.1502}\pm0.2516\)
& \(\mathbf{1.5004}\pm0.0914\)
& \(\mathbf{0.2429}\pm0.0151\)
& \(\mathbf{0.0951}\pm0.0042\)
& \(\mathbf{6.2942\times10^{-3}}\pm5.9890\times10^{-4}\) \\
& 2 & BDCA
& \(3.1415\pm0.2319\)
& \(1.4970\pm0.0852\)
& \(0.2433\pm0.0154\)
& \(0.0954\pm0.0049\)
& \(6.3574\times10^{-3}\pm7.0140\times10^{-4}\) \\
& 2 & iBDCA
& \(3.1357\pm0.2481\)
& \(1.4914\pm0.0927\)
& \(0.2467\pm0.0158\)
& \(0.0957\pm0.0042\)
& \(6.3786\times10^{-3}\pm6.6360\times10^{-4}\) \\
\hline\hline
\textbf{S\&P 500}
& -- & EW
& \(1.8338\)
& \(1.8021\)
& \(0.1400\)
& \(0.0067\)
& \(0\) \\
& -- & BH
& \(1.8344\)
& \(1.7423\)
& \(0.1519\)
& \(0.0067\)
& \(0\) \\
\hline
& 1 & DCA
& \(\mathbf{2.2957}\pm0.1062\)
& \(\mathbf{1.3499}\pm0.0660\)
& \(\mathbf{0.2321}\pm0.0107\)
& \(0.0531\pm0.0027\)
& \(8.0639\times10^{-4}\pm5.0903\times10^{-4}\) \\
& 1 & BDCA
& \(2.2512\pm0.1278\)
& \(1.3217\pm0.0811\)
& \(0.2331\pm0.0112\)
& \(0.0528\pm0.0027\)
& \(7.4704\times10^{-4}\pm5.6140\times10^{-4}\) \\
& 1 & iBDCA
& \(2.1958\pm0.1441\)
& \(1.2801\pm0.0954\)
& \(0.2380\pm0.0166\)
& \(\mathbf{0.0526}\pm0.0032\)
& \(\mathbf{6.8527\times10^{-4}}\pm5.6762\times10^{-4}\) \\
\hline
& 2 & DCA
& \(\mathbf{2.3001}\pm0.1184\)
& \(\mathbf{1.3533}\pm0.0769\)
& \(\mathbf{0.2321}\pm0.0112\)
& \(\mathbf{0.0532}\pm0.0029\)
& \(8.5719\times10^{-4}\pm5.7624\times10^{-4}\) \\
& 2 & BDCA
& \(2.2586\pm0.1222\)
& \(1.3276\pm0.0810\)
& \(0.2337\pm0.0123\)
& \(0.0542\pm0.0030\)
& \(8.1426\times10^{-4}\pm5.0153\times10^{-4}\) \\
& 2 & iBDCA
& \(2.2006\pm0.1569\)
& \(1.2832\pm0.1045\)
& \(0.2387\pm0.0152\)
& \(0.0537\pm0.0030\)
& \(\mathbf{8.0397\times10^{-4}}\pm5.8977\times10^{-4}\) \\
\hline
\end{tabular}%
}
\caption{Out-of-sample portfolio performance.}
\label{tab:portfolio_results}
\end{table}
Figure~\ref{fig:wealth_trajectories} shows the median wealth
trajectories and interquartile bands for DCA, BDCA, and iBDCA. The
left and right panels correspond to Schemes~1 and~2, respectively.
Since EW and BH are deterministic for each dataset, they are
represented by single wealth trajectories.

\begin{figure}[H]
\centering
\begin{subfigure}{0.48\textwidth}
\centering
\includegraphics[width=\linewidth]
{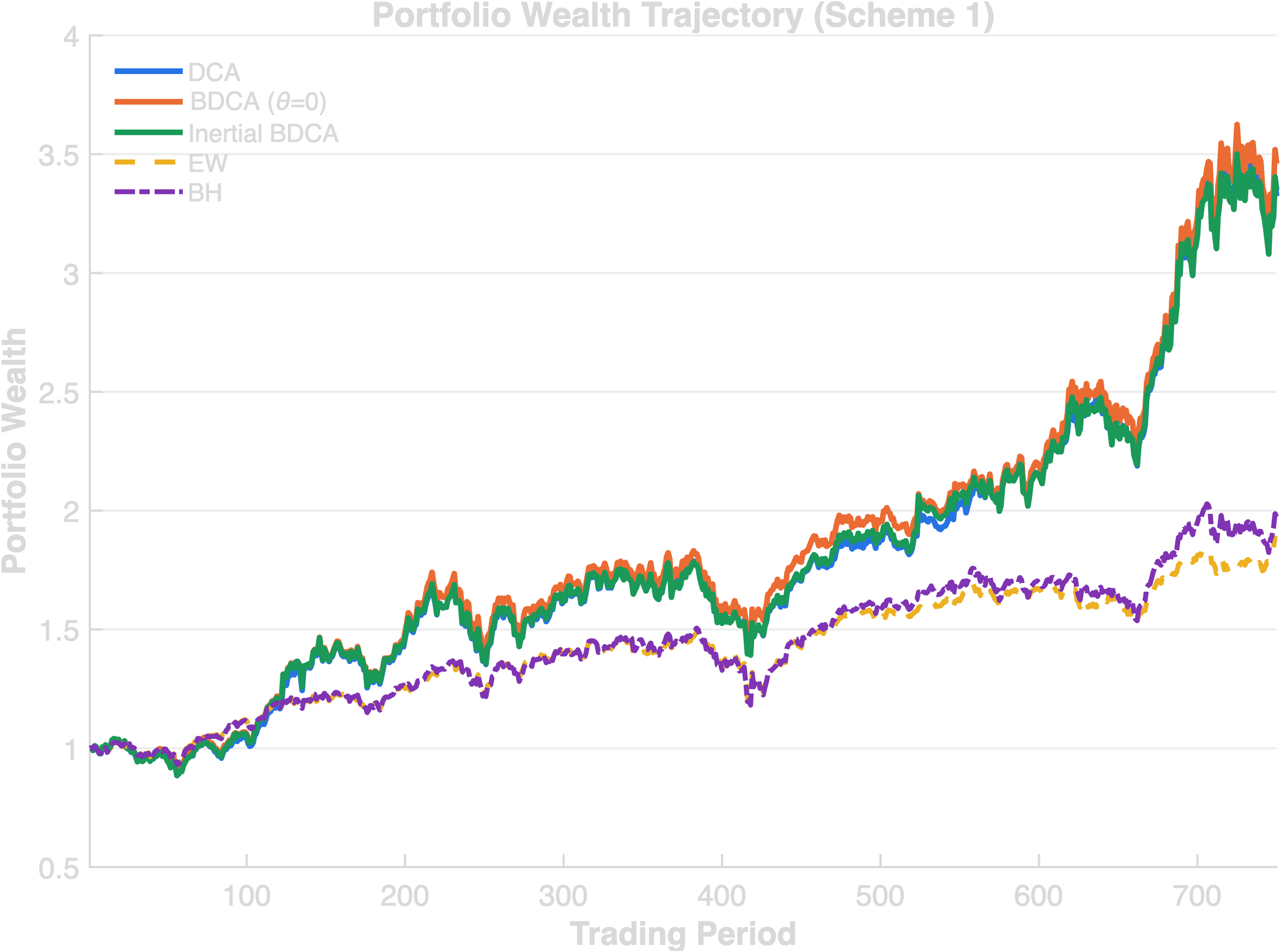}
\caption{NASDAQ scheme 1}
\end{subfigure}
\hfill
\begin{subfigure}{0.48\textwidth}
\centering
\includegraphics[width=\linewidth]
{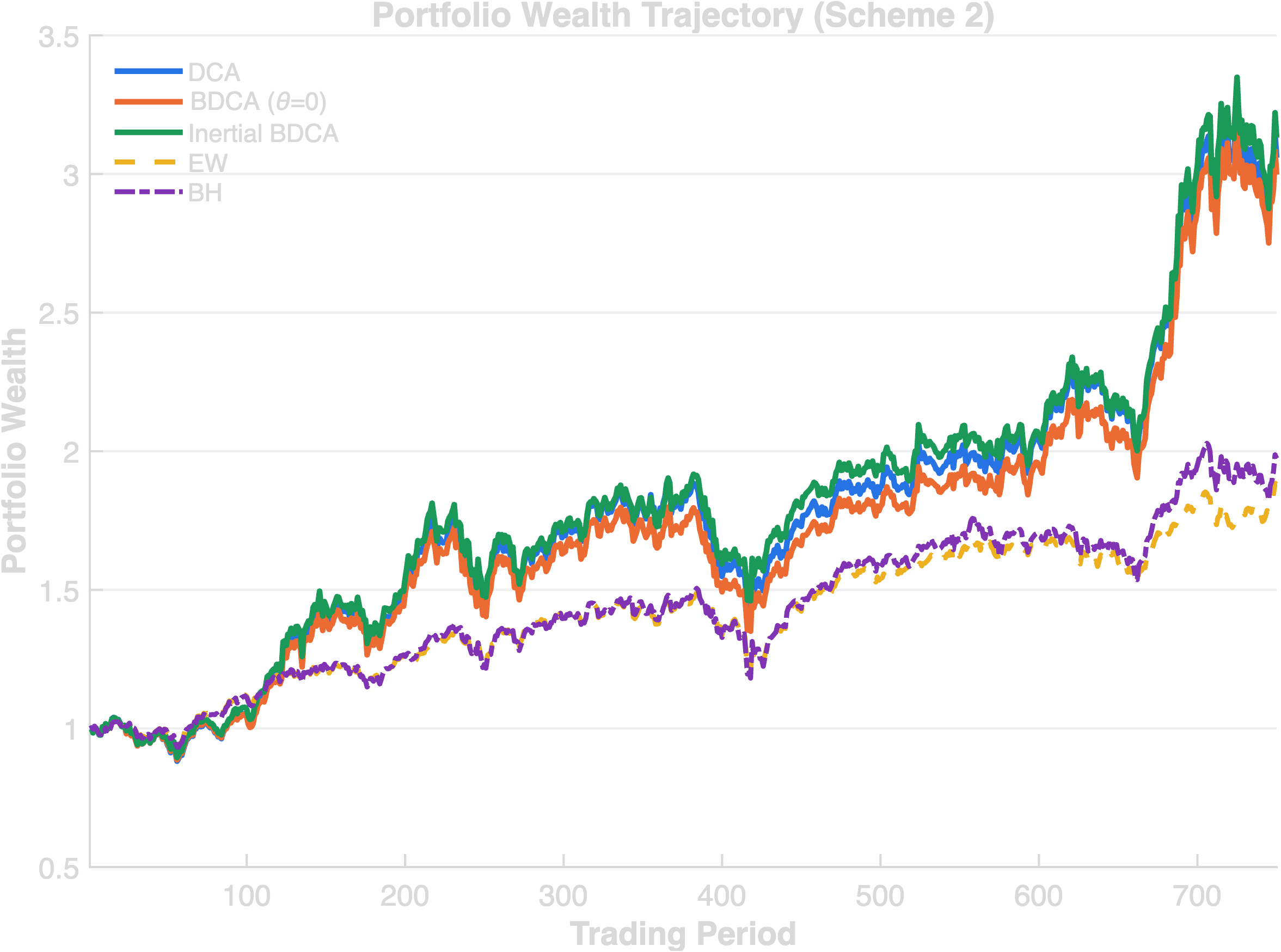}
\caption{NASDAQ scheme 2}
\end{subfigure}

\medskip

\begin{subfigure}{0.48\textwidth}
\centering
\includegraphics[width=\linewidth]
{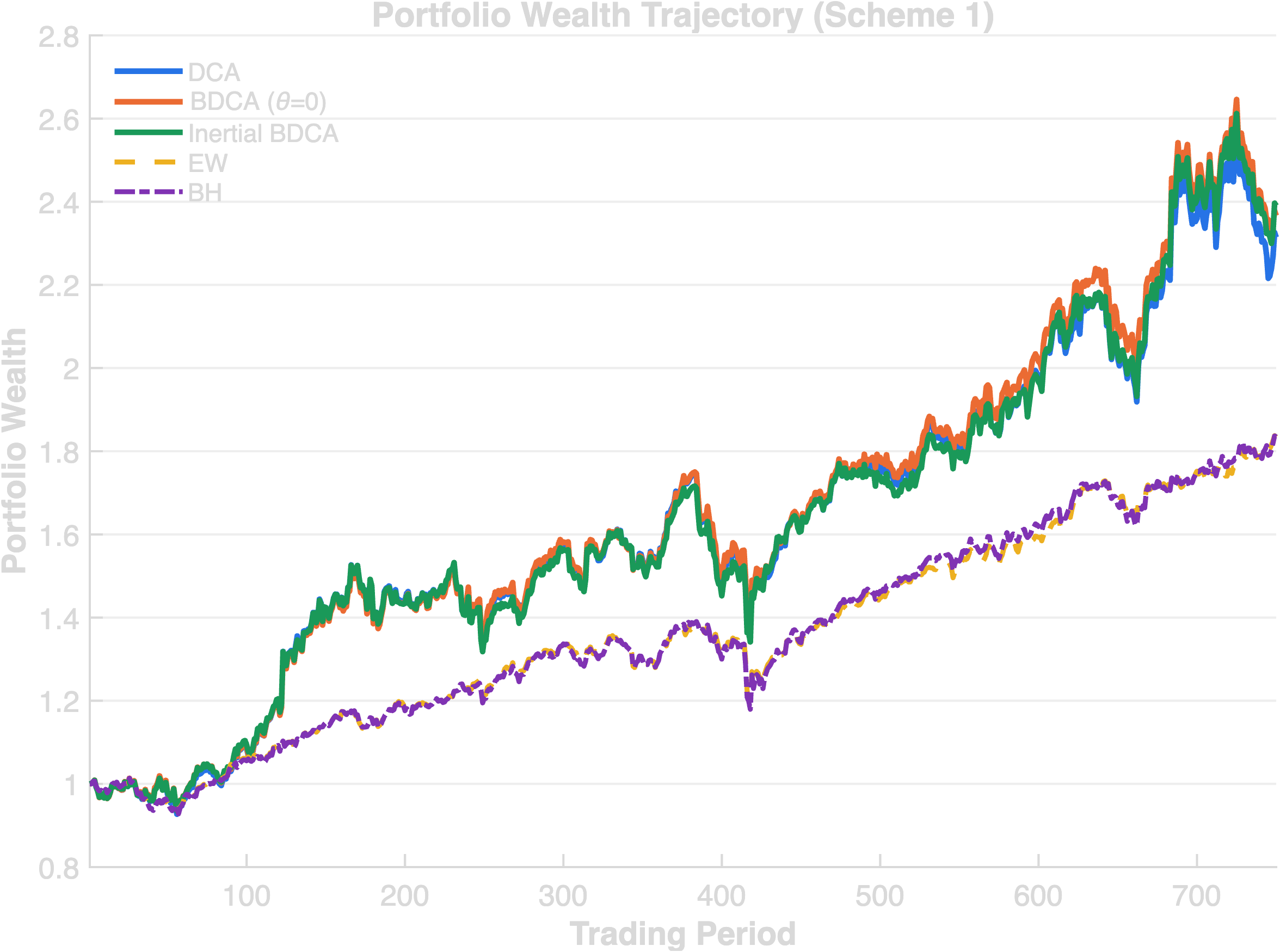}
\caption{S\&P 500 scheme 1}
\end{subfigure}
\hfill
\begin{subfigure}{0.48\textwidth}
\centering
\includegraphics[width=\linewidth]
{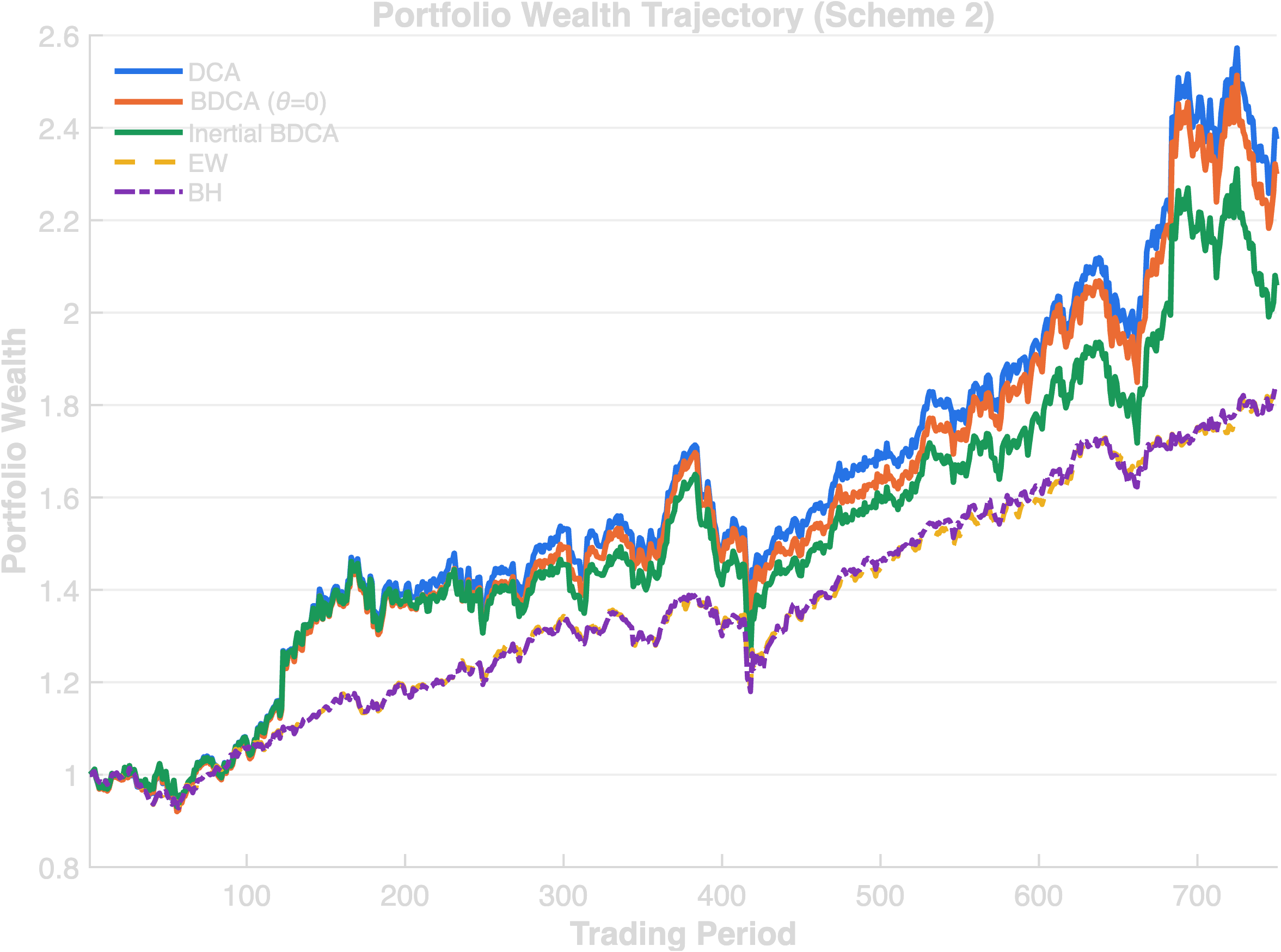}
\caption{S\&P 500 scheme 2}
\end{subfigure}

\caption{Out-of-sample wealth trajectories.}
\label{fig:wealth_trajectories}
\end{figure}

\begin{remark}
The out-of-sample results reveal different return--risk patterns across
the two datasets. For NASDAQ, the optimized strategies attain higher
final wealth and Sharpe ratios than EW and BH, but also exhibit larger
drawdowns. Their exceedance rates are around \(9.5\%\), well above the
nominal level \(1-\alpha=5\%\), and their VaR residuals remain
positive. Thus, the higher realized performance is accompanied by
weaker out-of-sample risk control. Among the three optimization
methods, the relative performance also depends on the initialization
scheme: iBDCA performs best in several criteria under Scheme~1,
whereas DCA performs best across all reported criteria under Scheme~2.

For the S\&P 500, the optimized strategies again attain higher final
wealth, whereas EW and BH provide higher Sharpe ratios and smaller
drawdowns. In contrast to NASDAQ, the exceedance rates of DCA, BDCA,
and iBDCA remain close to the nominal \(5\%\) level, and the
corresponding VaR residuals are relatively small. Although iBDCA
attains lower penalized-objective values in both the matched solver
and robustness experiments, this penalized-objective advantage does
not translate consistently into superior out-of-sample portfolio
performance. Overall, no single method among DCA, BDCA, and iBDCA
dominates the others across both datasets, initialization schemes, and
reported out-of-sample criteria.
\end{remark}

\section{Conclusion}
\label{sec6}

This paper considered a multi-period portfolio optimization problem
with VaR constraints under a finite-scenario approximation,
transaction costs, and diversification 
regularization. The resulting problem is nonsmooth and nonconvex.
Using an exact finite-scenario VaR--CVaR representation, we formulated
it as a penalized DC problem over the convex portfolio set.

We proposed a projected inertial BDCA for solving the penalized
problem. The method combines inertial extrapolation, projection, an
objective safeguard, a strongly convex DCA subproblem, and a
backtracking line search. We proved that the algorithm is well defined,
generates a decreasing objective sequence, and has only critical
accumulation points for the penalized DC problem. Under a local
no-ties condition at an accumulation point, we further established
whole-sequence convergence with a local \(R\)-linear rate.

The numerical investigation consisted of matched solver experiments
and out-of-sample portfolio backtests. On the matched instances, iBDCA
frequently attained lower penalized objective values than DCA and
standard BDCA, although generally at the cost of additional iterations
and solution time. The portfolio backtests additionally included
equal-weight and buy-and-hold benchmarks and evaluated final wealth,
Sharpe ratio, maximum drawdown, and empirical VaR measures. No single method performed best across all evaluation criteria. In
particular, a lower penalized objective value did not always lead to
higher realized wealth or better out-of-sample risk measures. The
results therefore illustrate the trade-offs among return, risk,
transaction costs, and empirical VaR control.
\begin{table}[H]
\centering
\footnotesize
\setlength{\tabcolsep}{1.5pt}
\renewcommand{\arraystretch}{0.95}

\resizebox{\linewidth}{!}{%
\begin{tabular}{cccccccccc|cccccccccc}
\hline
\multicolumn{10}{c|}{\textbf{NASDAQ dataset}}
&
\multicolumn{10}{c}{\textbf{S\&P 500 dataset}} \\
\hline
AAPL & ADBE & ADI & ADP & AMD & AMGN & AMZN & AVGO & BKNG & CMCSA
&
AAPL & MSFT & IBM & JPM & BAC & GS & JNJ & MRK & ABBV & AMZN
\\
COST & CSCO & GILD & GOOGL & HON & INTU & ISRG & LRCX & MDLZ & META
&
HD & MCD & PG & KO & WMT & CAT & GE & RTX & XOM & CVX
\\
MSFT & NFLX & NVDA & ORCL & PANW & PEP & PYPL & QCOM & REGN & SBUX
&
COP & GOOGL & META & VZ & NEE & DUK & LIN & APD & AMT & PLD
\\
\hline
\end{tabular}%
}

\caption{Ticker symbols of the assets used in the two datasets.}
\label{tab:asset_lists}
\end{table}

\end{document}